\numberwithin{equation}{section}
\numberwithin{figure}{section}
\DeclareMathAlphabet\mathcal{OMS}{cmsy}{m}{n}
\SetMathAlphabet\mathcal{bold}{OMS}{cmsy}{b}{n}
\newtheorem{theorem}{Theorem}[section]
\newtheorem{lemma}[theorem]{Lemma}
\newtheorem{definition}[theorem]{Definition}
\newtheorem{remark}[theorem]{Remark}
\newcommand{\mR}{\mathbb{R}}
\newcommand{\mD}{\mathbb{D}}
\newcommand{\mP}{\mathbb{P}}
\newcommand{\mN}{\mathbb{N}}
\newcommand{\mL}{\mathbb{L}}
\newcommand{\um}[1]{$\ddot{\text{#1}}$}
\newcommand{\shs}[1]{\hspace*{#1cm}}
\newcommand{\svs}[1]{\vspace*{#1cm}}
\newcommand{\cb}{\mathcal{B}}
\newcommand{\cf}{\mathcal{F}}
\newcommand{ \cc}{\mathcal{C}}
\newcommand{\cs}{\mathcal{S}}
\newcommand{\ch}{\mathcal{H}}
\newcommand{\cg}{\mathcal{G}}
\newcommand{\ce}{\mathcal{E}}
\begin{document}

\title[Malliavin calculus for the stochastic Cahn-Hilliard equation driven by fractional noise] {Malliavin calculus for the stochastic Cahn-Hilliard equation driven by fractional noise}


\author[Dimitriou, Farazakis, Karali]{Dimitrios Dimitriou$^{\dag\ddagger}$, Dimitris Farazakis$^{**\ddagger}$, Georgia Karali$^{*\ddagger}$}

\subjclass{35K55, 60H07, 60H15, 60H30.}

\thanks
{$^{\dag}$ Department of Mathematics and Applied Mathematics, University of Crete, Heraklion, Greece.}
\thanks
{$^{**}$ Department of International and European Economic Studies, School of Economics, University of Western Macedonia.}
\thanks
{$^{*}$ Department of Mathematics, National and Kapodistrian University of Athens, Panepistimiopolis, Athens, Greece.}
\thanks
{$^{\ddagger}$ Institute of Applied and Computational Mathematics, FORTH, Heraklion, Greece.}

\thanks{E-mails: mathp429@math.uoc.gr, gkarali@math.uoa.gr, dfarazakis@uowm.gr}



\begin{abstract}
The stochastic partial differential equation analyzed in this work is the Cahn-Hilliard equation perturbed by an additive fractional white noise (fractional in time and white in space). We work in the case of one spatial dimension and apply Malliavin calculus to investigate the existence of a density for the stochastic solution $u$. In particular, we show that $u$ admits continuous paths almost surely and construct a localizing sequence through which we prove that its Malliavin derivative exists locally, and that its law is absolutely continuous with respect to the Lebesgue measure on $\mR$, thereby establishing that a density exists. A key contribution of this work is the analysis of the stochastic integral appearing in the mild formulation: we derive sharp estimates for the expectation of the $p$-th power ($p \geq 2$) of the $L^{\infty}(D)$-norm of this stochastic integral as well as for the integral involving the $L^{\infty}(D)$-norm of the  operator associated with the kernel appearing in the integral representation of the fractional noise, all of which are essential for this study.
\end{abstract}

\maketitle

\pagestyle{myheadings}

\thispagestyle{plain}

\textbf{Keywords:} Existence of density, Fractional noise, Malliavin calculus, Stochastic Cahn-Hilliard equation.

\section{Introduction} \label{Section 1}

\subsection{The stochastic model} \label{Subsection 1.1}
Let $T>0$ and $D:= [0,\pi]$. In this work we study the following stochastic version of the Cahn-Hilliard equation perturbed by an additive fractional white noise $\dot{W}_{H}$
\begin{equation} \label{stoch. model}
u_{t} = - \big{(} u_{xx} - f(u) \big{)}_{xx} + \sigma \dot{W}_H\quad \text{in}\ D\times(0,T].
\end{equation}
Here, $\sigma \in \mR$ and $f$ is a polynomial function of degree three with constant coefficients (independent of $t$ and $x$) and a positive dominant coefficient. In addition, the fractional white noise $\dot{W}_{H}$ (the proper definition will be given in Subsection \ref{Subsection 2.2}) is understood in the sense of Walsh (\cite{W}), i.e., as the formal derivative $\frac{\partial W_{H}}{\partial x \partial t}$, in the sense of (Schwartz) distributions, of the continuous version of the zero mean Gaussian process
$$\big{\{}W_{H}(x,t)\, :\, (x,t) \in D\times[0,T]\big{\}},\quad  H \in \big{(} \tfrac{1}{2}, 1 \big{)},$$
with covariance
$$E \big{(} W_{H}(x,t) W_{H}(y,s) \big{)} = \min\{x,y\} \cdot \frac{t^{2H} + s^{2H} - |t-s|^{2H}}{2}.$$
The process $W_{H}$ is defined on a complete probability space $(\Omega,\cf,\mP)$ endowed with the standard filtration $\{\cf^{}_{t}\}_{t\in[0,T]}$ of the process, that is, the standard enlargement (satisfying the usual conditions: right-continuity and completeness) of the filtration generated by $W_{H}$, i.e.,
$$\sigma \big{(} \big{\{} W_{H}(x,s)\, :\, x\in D,\ s\leq t \big{\}} \big{)},\ t\in[0,T].$$
Note that $W_{H}$ is a Brownian motion in the space variable and a fractional Brownian motion with Hurst parameter $H \in \big{(} \frac{1}{2}, 1 \big{)}$ in the time variable.

The initial and boundary value problem that we will study regarding (\ref{stoch. model}) includes an initial condition 
$$u(x,0) = u_{0}(x)\quad \text{in}\ D$$
as well as the homogeneous Neumann boundary conditions
\begin{equation} \label{Neumann b.c.}
\dfrac{\partial u}{\partial x} = \dfrac{\partial^{3} u}{\partial x^{3}} = 0\quad \text{on}\ \{0,\pi\}\times[0,T].
\end{equation}

The Cahn-Hilliard equation illustrates the phenomenon of spinodal decomposition in metal alloys. In particular, it describes the phase separation by which the two components of a binary fluid spontaneously separate and form regions that are pure in each constituent, \cite{CH}. Stochasticity was incorporated into the deterministic partial differential equation in \cite{C} with the introduction of thermal fluctuations in the form of an additive space-time white noise. Various versions of this stochastic Cahn-Hilliard equation have been thoroughly studied and many interesting results regarding the existence of a solution and its density, the uniqueness and the regularity of the solution, and much more have been derived. For instance, in \cite{WEB1} all the aforementioned topics were analyzed in the case of  a multiplicative space-time white noise. In \cite{AKM}, a combined Cahn-Hilliard/Allen-Cahn model was considered, where the authors investigate the existence and regularity of the solution with unbounded noise diffusion in dimensions one, two and three.

Stochastic partial differential equations driven by additive or multiplicative fractional noises have also been the subject of much research. This stems from the self-similar and long-range dependence properties of the fractional Brownian motion which make it a suitable candidate to describe data in fields like physics, biology, hydrology, network research, financial mathematics (see, e.g., \cite{LTWW,M}). Some notable examples regarding the theoretical analysis of equations of this type are \cite{NO}, where the authors proved existence and uniqueness of mild solutions to a class of second order heat equations with additive fractional noise, \cite{H}, where Lyapunov exponent estimates on the solutions of second-order Anderson models with multiplicative fractional noise potentials were derived, and \cite{NS}, where the differentiability in the direction of the Cameron-Martin space and the existence of a density for the solution of a stochastic differential equation driven by fractional noise were proved.
For a thorough study of Malliavin calculus and stochastic partial differential equations, we refer the reader to the books \cite{N} and \cite{dalang}, respectively.

We focus on the stochastic Cahn-Hilliard equation driven by additive fractional white noise. In \cite{BJW}, the existence of a unique (global) mild solution was established in spatial dimensions $d<4H$, where $H \in \big{(}\frac{1}{2},1 \big{)}$ is the Hurst parameter of the fractional noise, via a weak convergence argument. We aim to establish the existence of a density for the mild solution in one spatial dimension. Our approach introduces new and delicate techniques that extend the analysis of \cite{WEB1, AFK1} beyond the space-time white noise setting. 
\subsection{Main results} \label{Subsection 1.2}
We investigate whether the random variable $u(x,t)$ (the unique solution of (\ref{stoch. model}) for fixed values of $(x,t)$) has a density; an affirmative answer is given by proving that its law is absolutely continuous with respect to the Lebesgue measure on $\mR$. Specifically, we present a suitable sequence $\{\Omega_{n}\}_{n\in\mN_{*}} \subset \Omega$ and construct an approximating sequence $\{u_{n}(x,t)\}_{n \in \mN_{*}}$ of $u(x,t)$ for which we prove existence of the Malliavin derivative and the almost surely strict positivity of a particular norm involving it. This allows us to employ important results from the theory of Malliavin calculus, presented in \cite{N}, to show the absolute continuity of the law of $u_{n}(x,t)$ with respect to the Lebesgue measure on $\mR$. The sequence $\{(\Omega_{n},u_{n}(x,t))\}_{n \in \mN_{*}}$ localizes $u(x,t)$ in $\mD^{1,2}$ (we refer to Subsection \ref{Subsection 2.1}), thereby enabling a comparison between the law of $u_{n}(x,t)$ and $u(x,t)$, which ultimately facilitates the derivation of our main result.

A central novelty of our work lies in the treatment of two distinct integrals, namely the stochastic integral appearing in the mild formulation of the solution $u$ of (\ref{stoch. model}) (we refer to Subsection \ref{Subsection 2.3}) and the deterministic integral involving the operator associated with the kernel that appears in the integral representation of the fractional noise (we refer to Subsection \ref{Subsection 2.2}), both of which require new estimates and sharp bounds.
More specifically, we establish a useful estimate regarding the expectation of the $p$-th power ($p\geq 2$) of the $L^{\infty}(D)$-norm of this stochastic integral. In addition, we derive a rather challenging lower estimate for the aforementioned deterministic integral as well as an upper estimate for the integral involving the $L^{\infty}(D)$-norm of the same integrand. Crucially, these estimates do not appear in earlier formulations considered in the literature, and their necessity for our approach introduces significant new analytical challenges. As such, this work is essential for extending the theory of the fractional noise setting.\\[0.2cm]
\indent In what follows, we state the main result of this paper and outline the rest of its structure.

\begin{theorem}
Let $u$ be the unique solution of $(\ref{stoch. model})$ subjected to the homogeneous Neumann boundary conditions $(\ref{Neumann b.c.})$ and with a deterministic initial condition $u_{0} \in C(D)$. Then, the Malliavin derivative of $u(x,t)$ exists locally. Furthermore, for any $(x,t) \in D \times (0,T]$, the law of the random variable $u(x,t)$ is absolutely continuous
with respect to the Lebesgue measure on $\mR$.
\end{theorem}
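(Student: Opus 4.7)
The plan is to follow the Malliavin-calculus localization scheme of \cite{WEB1,AFK1} and adapt it to the fractional noise setting, the adaptation being driven by the $L^{\infty}(D)$ estimates on the stochastic integral and on the kernel operator previewed in the abstract. Starting from the mild formulation (Subsection~\ref{Subsection 2.3}), I would split $u(x,t) = v(x,t) + I(x,t)$, where $I$ is the stochastic convolution of the fractional noise against the Green's kernel $G$ of the fourth-order operator equipped with the Neumann conditions~(\ref{Neumann b.c.}) and $v$ absorbs the deterministic and nonlinear contributions. Since $f$ is cubic with positive leading coefficient, the drift is only locally Lipschitz, so $u$ itself cannot be placed in $\mD^{1,2}$ directly; instead, I would introduce the localizing events $\Omega_{n} = \{\|I\|_{L^{\infty}(D\times[0,T])} \leq n\}$ (enlarged by any analogous bounds needed to control $\|u\|_{L^\infty(D\times[0,T])}$) and the corresponding truncated mild equation obtained from~(\ref{stoch. model}) by multiplying the drift by an appropriate cut-off. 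This produces an approximating sequence $\{u_{n}\}_{n\in\mN_{*}}$ that coincides with $u$ on $\Omega_{n}$ and whose effective drift is globally Lipschitz.

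The existence of the Malliavin derivative of $u_{n}(x,t)$ would then be obtained by a Picard iteration in $\mD^{1,2}$: iterating the truncated mild equation, using the closability of the Malliavin derivative together with the sharp $p$-th moment bound on $\|I\|_{L^{\infty}(D)}$ and the upper estimate on the kernel operator announced in Subsection~\ref{Subsection 2.2}, one obtains uniform $L^{p}(\Omega;\ch)$ bounds on the Malliavin derivatives of the iterates and passes to the limit. The limit satisfies a linear integral equation of the form
\begin{equation*}
Du_{n}(r,z)(x,t) \;=\; \sigma\, G(x,t;r,z) \;-\; \int_{r}^{t}\!\!\int_{D} G(x,t;s,y)\, f'\!\big(u_{n}(y,s)\big)\, Du_{n}(r,z)(y,s)\, \chi_{\Omega_{n}}\, dy\, ds,
\end{equation*}
for $0\leq r\leq t$ and $z\in D$, where the fractional-noise kernel bound is what makes the right-hand side meaningful.

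With Malliavin differentiability in hand, the absolute continuity of the law of $u_{n}(x,t)$ would follow from the Bouleau--Hirsch criterion (Theorem 2.1.3 in \cite{N}) provided $\|Du_{n}(x,t)\|_{\ch} > 0$ almost surely. A Gronwall-type comparison against the non-random first term $\sigma\, G(x,t;\cdot,\cdot)$ reduces this strict positivity to the \emph{lower} estimate on the deterministic integral involving the kernel of the fractional noise that is announced in Subsection~\ref{Subsection 2.2}. Finally, since the events $\{\Omega_{n}\}$ exhaust $\Omega$ up to a $\mP$-null set and $u=u_{n}$ on $\Omega_{n}$, both the local existence of $Du$ and the absolute continuity of the law of $u(x,t)$ transfer from the $u_{n}$ to $u$, yielding the theorem.

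The main obstacle is expected to be the strict positivity of $\|Du_{n}(x,t)\|_{\ch}$ for $(x,t)\in D\times(0,T]$. The space $\ch$ is the Cameron--Martin space of $W_{H}$ and carries the fractional covariance kernel, which can conspire with the oscillations of the Cahn--Hilliard Green's function $G$ to produce near-cancellations that have no counterpart in the space-time white noise case. Establishing the advertised lower bound for the deterministic kernel integral, with explicit and honest dependence on $H\in(\tfrac{1}{2},1)$ and uniformly away from $t=0$, is therefore the central analytic step on which the whole argument hinges.
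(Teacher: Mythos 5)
Your outline follows essentially the same route as the paper: localization via a truncated drift and an exhausting sequence of events (the paper takes $\Omega_{n}=\{\sup_{D\times[0,T]}|u|<n\}$ rather than a bound on the stochastic convolution, but this is immaterial), Picard iteration in $\mD^{1,2}$ leading to a linear integral equation for $Du_{n}(x,t)$, the Bouleau--Hirsch criterion (Theorem 2.1.3 of \cite{N}), and strict positivity of the Malliavin norm obtained by playing a lower bound on the deterministic fractional-kernel term on $[t-\epsilon,t]$ against an upper bound on the remainder as $\epsilon\to0^{+}$ --- which you correctly single out as the crux. The only cosmetic difference is that you keep the derivative $\ch$-valued, whereas the paper transfers everything to the underlying space-time white noise through the isometry $K_{H}^{*}$, so its equation for $Du_{n}$ carries $\sigma\big[K_{H}^{*}\big(G(x,\diamond,t-\bullet)\textup{\textbf{1}}_{[0,t]}(\bullet)\big)\big]$ in place of your $\sigma G$, and a bounded process $\cg_{n}$ from the Lipschitz chain rule in place of $f'(u_{n})$.
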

In Section \ref{Section 2}, we present some basic definitions from the theory of Malliavin calculus such as the notion of the Malliavin derivative, the spaces of random variables and stochastic processes $\mD^{1,2}$, $\mL^{1,2}$ and their local versions $\mD_{\text{loc}}^{1,2}$, $\mL_{\text{loc}}^{1,2}$. Furthermore, we present the definition and some properties of our fractional noise as well as the weak and mild formulation of the solution of $(\ref{stoch. model})$. 

In Section \ref{Section 3}, we provide some important estimates regarding the stochastic integral that appears in the mild formulation of the solution $u$ of $(\ref{stoch. model})$ (Lemma \ref{Estimates for stoch. int.}) and prove the space-time continuity of $u$ (Lemma \ref{continuity of u}). Moreover, we show that $u(x,t) \in \mD_{\text{loc}}^{1,2}$ by constructing a suitable localizing sequence $\{(\Omega_{n},u_{n}(x,t))\}_{n \in \mN_{*}}$. In particular, this construction involves the existence and uniqueness of an approximating sequence $\{u_{n}\}_{n \in \mN_{*}}$ that coincides with $u$ on $\Omega_{n}$ 
(Lemma \ref{main lemma 1}) and the representation of the Malliavin derivative of $u_n(x,t)$ (Lemma \ref{main lemma 2}) both given as the solutions to specific integral equations. The Malliavin derivative of $u(x,t)$ is then directly determined by that of $u_{n}(x,t)$ (see Remark \ref{important remark}).

In Section \ref{Section 4}, we present an important estimate involving the Malliavin derivative of $u_{n}(x,t)$ (Lemma \ref{important estimate}) that is then used to prove the absolute continuity of the law of $u_{n}(x,t)$ with respect to the Lebesgue measure on $\mR$ (Lemma \ref{abs. con. of law of u_n}). Lastly, we show that this property of the law of $u_{n}(x,t)$ translates to the analogous property for the law of $u(x,t)$ (Theorem \ref{main theorem 2}) which in turn implies the existence of a density for $u(x,t)$.

As usual, the generic positive constant, depending on some parameter $T$ for example, will be denoted by $c(T)$ or simply $c$ if the dependence is not essential. Its exact value may also change from line to line. In many instances throughout this work, the symbols $\cdot, \diamond, \bullet$ will be used to represent the variables $x, y, s$ respectively, and will serve as placeholders within mathematical expressions, such as norms and operator actions.

\section{Preliminaries} \label{Section 2} 
\subsection{Basic concepts from Malliavin calculus} \label{Subsection 2.1}
$ $\newline \indent
The definitions we will present are based on the work of Nualart, \cite{N}. In the book, the author provides a general framework regarding the Malliavin derivative through the use of an isonormal Gaussian process $\{W(h)\}_{h\in \mathscr{H}}$ where $\mathscr{H}$ is a real separable Hilbert space. Our work and the definitions that we will provide correspond to the special case $\mathscr{H} = L^{2} ( D \times [0,T] )$.
\begin{definition} \label{Def. of i.G.p.}
A stochastic process $\{W(h)\}_{h\in \mathscr{H}}$ defined on a complete probability space $(\Omega,\cf,\mP)$ is called an isonormal Gaussian process if $W$ is a centered Gaussian family of random variables such that 
$$E \big{(} W(h) W(g) \big{)} = \langle h,g \rangle_{H},\ \forall h,g \in \mathscr{H}.$$
\end{definition}
\noindent It is easy to see that the mapping $h \mapsto W(h)$ is almost surely linear. In fact, this mapping provides a linear isometry of $\mathscr{H}$ onto a closed subspace of $L^{2}(\Omega)$.

We now fix an isonormal Gaussian process $\{W(h)\}_{h\in \mathscr{H}}$ defined on a complete probability space $(\Omega,\cf,\mP)$ where $\cf = \sigma (\{ W(h)\, :\, h \in \mathscr{H} \})$. We denote by $\cs$ the set of random variables of the form 
\begin{equation} \label{elements of S}
F = f\big{(} W(h_{1}), \dotsc, W(h_{n}) \big{)},
\end{equation}
where $f \in C_{p}^{\infty}(\mR^{n})$ (the set of all infinitely continuously differentiable functions such that $f$ and all of its partial derivatives have polynomial growth), $h_{1}, \dotsc, h_{n} \in \mathscr{H}$ and $n \in \mN_{*}$. The $($Malliavin$)$ derivative of a random variable $F\in\cs$ is then defined as the $\mathscr{H}$-valued random variable
\begin{equation} \label{mal. der. of elements of S}
\Tilde{D}F:=\sum\limits_{i=1}^{n}\frac{\partial f}{\partial x_{i}}\big{(}W(h_{1}),\dotsc,W(h_{n})\big{)}h_{i}.
\end{equation}
\noindent We note that the operator $\Tilde{D}:\cs\to L^{2}(\Omega;\mathscr{H})$ is linear and unbounded. Moreover, it can be shown that it is closable from $L^{2}(\Omega)$ to $L^{2}(\Omega;\mathscr{H})$ $($cf. \cite{N}, Proposition $1.2.1)$, thus it has a closed extension. We denote this closed extension by $D$ and its domain by $\mD^{1,2}$. More specifically, we have the following definition:
\begin{definition}
The space $\mD^{1,2}$ is defined as the closure of $\cs$ with respect to the norm 
$$\|F\|_{\mD^{1,2}}^{2}:=\|F\|^{2}_{L^{2}(\Omega)}+\|DF\|_{L^{2}(\Omega;\mathscr{H})}^{2}.$$
\end{definition}
\begin{remark} \label{R.R. 11}
$1)$ $\mD^{1,2}$ is a Hilbert space with inner product
$\langle F,G\rangle_{\mD^{1,2}}:=\langle F,G\rangle_{L^{2}(\Omega)}+\langle DF,DG\rangle_{L^{2}(\Omega;\mathscr{H})}$.\\[0.1cm]
$2)$ Given a random variable $F\in\mD^{1,2}$, the Malliavin derivative $DF$ is an element of $L^{2}(\Omega;\mathscr{H})$ which in our\\ \shs{0.5}case can be identified with $L^{2}(\Omega \times D\times[0,T])$. Thus the Malliavin derivative can be viewed as a stochastic\\ \shs{0.5}process $\{D_{y,s}F\}_{(y,s)\in D\times[0,T]}$ where $D_{y,s}F$ is defined almost everywhere with respect to the measure $\mP\otimes m$ \shs{0.5}$(m$ being the Lebesgue measure$)$. 
\\[0.1cm]
$3)$ The above definition can be extended to Hilbert-space-valued random variables. In particular, if $\mathscr{V}$ is any \shs{0.5}real separable Hilbert space, then the space $\mD^{1,2}(\mathscr{V})$ is defined as the completion of the space $\cs_{\mathscr{V}}$ consisting \shs{0.5}of random variables of the form $F=F_{1}v_{1}+\dotsc+F_{n}v_{n},\ F_{i}\in \cs,\ v_{i} \in \mathscr{V},\ i\in\{1,\dotsc,n\}$
with respect to 
$$\|F\|_{\mD^{1,2}(\mathscr{V})}^{2}:=\|F\|_{L^{2}(\Omega ; \mathscr{V})}^{2} + \|DF\|_{L^{2}(\Omega ; \mathscr{H} \otimes \mathscr{V})}^{2}\quad \text{where}\quad DF := DF_{1} \otimes v_{1} + \dotsc + DF_{n} \otimes v_{n}.$$
\shs{0.5}Of particular interest to us is the space $\mD^{1,2}(\mathscr{V})$ in the special case where $\mathscr{V} = \mathscr{H} = L^{2} ( D \times [0,T] )$. This \shs{0.5}is a Hilbert space, isomorphic to $L^{2} \big{(} D \times [0,T];\mD^{1,2} \big{)}$, and is denoted by $\mL^{1,2}$ with norm
$$\|F\|_{\mL^{1,2}}^{2} = \|F\|_{L^{2}(\Omega\times D \times [0,T])}^{2}+\|DF\|^{2}_{L^{2}(\Omega\times (D \times [0,T])^{2})}.$$
\end{remark}
\begin{definition} \label{Def. of local versions}
The local versions of the spaces $\mD^{1,2},\ \mL^{1,2}$ are defined as follows:
\begin{align*}
\mD^{1,2}_{\textup{\text{loc}}} &:= \big{\{}\textup{\text{r.v.}}\ F\, :\, \exists\, \{(\Omega_{n},F_{n})\}_{n\in\mN_{*}} \subset \mathcal{F}\times \mD^{1,2}\ \textup{\text{such that}}\ \Omega_{n}\uparrow \Omega\ \textup{\text{a.s.}}\ \&\ F=F_{n}\ \textup{\text{a.s. on}}\ \Omega_{n}\big{\}}, \\
\mL^{1,2}_{\textup{\text{loc}}} &:= \big{\{}\textup{\text{st.p.}}\ F\, :\, \exists\, \{(\Omega_{n},F_{n})\}_{n\in\mN_{*}} \subset \mathcal{F}\times \mL^{1,2}\ \textup{\text{such that}}\ \Omega_{n}\uparrow \Omega\ \textup{\text{a.s.}}\ \&\ F=F_{n}\ \textup{\text{a.s. on}}\ \Omega_{n}\big{\}},
\end{align*}
where
$$\Omega_{n}\uparrow \Omega\ \textup{\text{a.s.}}\iff\Omega_{1}\subseteq\Omega_{2}\subseteq  \cdots\subseteq\Omega\quad \textup{\text{such that}}\quad \underset{n\to+\infty}{\lim}\mP(\Omega_{n})=\mP(\Omega)=1.$$
\end{definition}
\begin{remark} \label{important remark} 
$1)$ It is straightforward that   $\mD^{1,2}\subseteq\mD^{1,2}_{\textup{\text{loc}}}$ and $\mL^{1,2}\subseteq\mL^{1,2}_{\textup{\text{loc}}}$. Moreover, it holds that $\mD^{1,2}\subseteq\mD^{1,1}$ $($cf. \cite{N}, p. $27)$ thus $\mD^{1,2}_{\textup{\text{loc}}}\subseteq\mD^{1,1}_{\textup{\text{loc}}}$.\\[0.1cm]
$2)$ If $F\in\mD^{1,2}_{\textup{loc}}$ and $\{(\Omega_{n},F_{n})\}_{n \in \mN_{*}}$ localizes $F$ in $\mD^{1,2}$, then $DF$ is defined without ambiguity by $DF=DF_{n}$ on $\Omega_{n},\ n\geq1$.
\end{remark}
\subsection{Fractional white noise and related properties} \label{Subsection 2.2}
$ $\newline
The definitions we will present are based on the work conducted in \cite{NO}. In what follows, $\cb,m$ denote the Borel $\sigma$-algebra and the Lebesgue measure respectively.
\begin{definition}
Let $D\subseteq\mR,\ T>0$ and $H\in(0,1)$. A centered Gaussian random field 
$$\big{\{}W_{H}(U\times[0,t])\, :\, U\in\cb(D),t\in[0,T]\big{\}},$$
defined on a complete probability space $(\Omega,\cf,\mP)$, with covariance function
\begin{equation*}
E \big{(} W_{H}(U\times[0,t]) W_{H}(V\times[0,s]) \big{)} = m(U\cap V) \cdot \frac{t^{2H} + s^{2H} - |t-s|^{2H}}{2},\ \forall U,V\in\cb(D), \forall t,s\in[0,T],
\end{equation*}
is called a fractional white noise (fractional in time, white in space) on $D\times[0,T]$ of Hurst parameter $H$.
\end{definition}
We denote now by $\ce$ the set of step functions on $D\times[0,T]$ and define the Hilbert space $\ch$ as the closure of $\ce$ with respect to the scalar product 
\begin{equation*}
\big{\langle}\textbf{1}_{U\times[0,t]},\textbf{1}_{V\times[0,s]} \big{\rangle}_{\mathcal{H}}:=
E \big{(} W_{H}(U\times[0,t]) W_{H}(V\times[0,s]) \big{)}.
\end{equation*}
The mapping $\textbf{1}_{U\times[0,t]}\mapsto W_{H}(U\times[0,t])$ can be extended to an isometry between $\ch$ and a closed subspace of $L^{2}(\Omega,\cf,\mP)$. We will denote this isometry by $\hat{W}_{H}(\phi)$ and, when $\phi$ is a function, use the notation 
\begin{equation} \label{fractional i.G.p}
\int_{0}^{T}\int_{D}\phi(y,s)W_{H}(dy,ds) := \hat{W}_{H}(\phi).
\end{equation}
Notice that $\{\hat{W}_{H}(\phi)\}_{\phi\in\ch}$ is an isonormal Gaussian process in the sense of Definition \ref{Def. of i.G.p.}.

We know that the fractional Brownian motion is a Volterra process with covariance $R_{H}(t,s)$ which can be written as (cf. \cite{N}, p. 278)
\begin{equation*}
\frac{t^{2H}+s^{2H}-|t-s|^{2H}}{2}=: R_{H}(t,s) = \int_{0}^{\min\{t,s\}}K_{H}(t,\tau)K_{H}(s,\tau)\, d\tau,
\end{equation*}
where, in our case of $H \in \left (\frac{1}{2},1 \right )$, the kernel $K_{H}$ is given by
\begin{align}
\begin{split}
    K_{H}(t,s):&=c_{1}(H)\, s^{\frac{1}{2}-H}\int_{s}^{t}(u-s)^{H-\frac{3}{2}}u^{H-\frac{1}{2}}\, du\\
    &= c_{2}(H) \bigg{[} (t-s)^{H-\frac{1}{2}} + \Big{(} H - \frac{1}{2} \Big{)} \int_{s}^{t} (u-s)^{H-\frac{3}{2}} \bigg{(} \Big{(} \frac{u}{s} \Big{)}^{H - \frac{1}{2}} - 1 \bigg{)}  du \bigg{]}
\end{split}
\end{align}
for $0<s<t\leq T$ and 
$$c_{1}(H):=\bigg{(}\dfrac{H(2H-1)}{\text{B}(2-2H,H-\frac{1}{2})}\bigg{)}^{\frac{1}{2}},\quad c_{2}(H):=\bigg{(}\dfrac{2H\Gamma(\frac{3}{2}-H)}{\Gamma(H+\frac{1}{2})\Gamma(2-2H)}\bigg{)}^{\frac{1}{2}}$$
with $\text{B},\Gamma$ being
the beta and gamma functions respectively. It is also straightforward to verify that 
\begin{equation} \label{par. der. of K_H}
    \dfrac{\partial K_{H}}{\partial t}(t,s)=c_{1}(H)\Big{(}\frac{t}{s}\Big{)}^{H-\frac{1}{2}}(t-s)^{H-\frac{3}{2}} >0\quad \text{for every}\ 0<s<t\leq T.
\end{equation}

We consider now the linear operator $K_{H}^{*}:\ce\to L^{2}(D\times[0,T])$ defined by
\begin{equation} \label{formula 1 for K_{H}^{*}}
\big{[} K_{H}^{*} ( \phi ) \big{]}(x,s):=\int_{s}^{T}\phi(x,r)\frac{\partial K_{H}}{\partial r}(r,s)\, dr,
\end{equation}
and observe that
\begin{equation*} \label{formula 2 for K_{H}^{*}}
\big{[} K_{H}^{*} ( \textup{\textbf{1}}_{U\times[0,t]} ) \big{]}(x,s)=K_{H}(t,s)\textup{\textbf{1}}_{U\times[0,t]}(x,s),\ \forall U\in\cb(D), \forall t\in[0,T].
\end{equation*}
As a consequence, the operator $K_{H}^{*}$ is an isometry between $\ce$ and $L^{2}(D\times[0,T])$ that can be extended to $\ch$.\\
In addition, $K_{H}^{*}$ is surjective (see \cite{BaTu}, Lemma 2.3). These properties imply that $K_{H}^{*}$ is invertible and thus, the Gaussian random field
\begin{equation} \label{space-time w.n.}
W(U\times[0,t]) := \hat{W}_{H} \big{(} (K_{H}^{*} )^{-1}  ( \textup{\textbf{1}}_{U\times[0,t]} ) \big{)},\ U\in\cb(D),\  t\in[0,T]
\end{equation}
is well defined and is in fact a space-time white noise. Furthermore, it holds almost surely
\begin{align}
W_{H}(U\times[0,t]) &= \displaystyle \int_{0}^{t} \int_{U} K_{H}(t,s) W(dy,ds),\ \forall U\in\cb(D),\forall t\in[0,T], \label{int. repres. of fractional noise}\\
\hat{W}_{H}(\phi) &= \displaystyle \int_{0}^{T} \int_{D} \big{[} K^{*}_{H} (\phi) \big{]} (y,s) W(dy,ds),\ \forall \phi \in L^{2}(D\times[0,T]). \label{int. repres. of fractional i.G.p.}
\end{align}
\begin{remark} \label{R.R. 12}
1) For any $H \in \big{(} \frac{1}{2},1 \big{)}$, it holds $L^{2}(D \times [0,T]) \subseteq L^{\frac{1}{H}}(D\times[0,T])\hookrightarrow\ch$ (see \cite{BJW}, Lemma 2.1 for a proof of this embedding).\\
2) The filtrations generated by $W$ and $W_{H}$ are the same (cf. \cite{BaTu}, p. 65-66).
\end{remark} 
\subsection{Weak and mild solutions} \label{Subsection 2.3}
$ $\newline  \indent
Let $\big{(} \Omega,\cf,\{\cf_{t}\}_{t\in[0,T]},\mP \big{)}$ be the complete, filtered probability space mentioned in Subsection \ref{Subsection 1.1}.
\begin{definition}
A continuous, $\cf_{t}$-adapted process $u = \{u(x,t)\}_{(x,t) \in D\times[0,T]}$ is called a weak solution of $(\ref{stoch. model})$ subjected to the homogeneous Neumann boundary conditions $(\ref{Neumann b.c.})$ and with a deterministic initial condition $u_{0} \in C(D)$, if it satisfies the following weak formulation

\begin{align} \label{weak formulation}
\begin{split}
    \int_{D} \big{(} u(x,t) - u_{0}(x) \big{)} \phi(x)\, dx = \int_{0}^{t} \int_{D} -\frac{\partial^{4} \phi}{\partial x^{4}}(x)u(x,s) &+ \frac{\partial^{2} \phi}{\partial x^{2}}(x) f\big{(} u(x,s) \big{)}\, dxds \\
    &+ \sigma \int_{0}^{t} \int_{D} \phi(x) W_{H}(dx,ds)
\end{split}
\end{align}
almost surely, for all $\phi\in C^{4}(D)$ with $\dfrac{\partial \phi}{\partial x}=\dfrac{\partial^{3} \phi}{\partial x^{3}}=0$ on $\{0,\pi\}$ and all $t \in [0,T]$.
\end{definition}
\begin{remark} \label{well defined stoch. int.}
The last term of $(\ref{weak formulation})$ is well defined. Indeed, (we recall that we work for $H \in \big{(} \frac{1}{2},1 \big{)}$)
$$\int_{0}^{t} \int_{D} \phi(x) W_{H}(dx,ds) = \int_{0}^{T} \int_{D} \phi(x) \textup{\textbf{1}}_{[0,t]}(s) W_{H}(dx,ds).$$
If we define $\psi(x,s) := \phi(x) \textup{\textbf{1}}_{[0,t]}(s)$, then $\psi \in L^{2}\big{(}D\times[0,T]\big{)}$ and thus
$$ \int_{0}^{T} \int_{D} \phi(x) \textup{\textbf{1}}_{[0,t]}(s) W_{H}(dx,ds) \overset{(\ref{fractional i.G.p})}{=} \hat{W}_{H}(\psi) \overset{(\ref{int. repres. of fractional i.G.p.})}{=} \int_{0}^{T} \int_{D} \big{[} K^{*}_{H} ( \psi ) \big{]} (x,s) W(dx,ds).$$
\end{remark}
\svs{0.1}
If we consider now the Green's function for the operator $\frac{\partial}{\partial t} + \frac{\partial^{4}}{\partial x^{4}}$ with homogeneous Neumann boundary conditions, we can then present the mild solution of (\ref{stoch. model}). More specifically, $u$ is a solution of (\ref{weak formulation}) if and only if it satisfies the following equation almost surely for any $x\in D$ and any $t\in[0,T]$:
\begin{equation} \label{mild formulation}
\begin{split}
    u(x,t) =\int_{D} G(x,y,t) u_{0}(y)\, dy &+ \int_{0}^{t} \int_{D} G_{yy}(x,y,t-s) f\big{(} u(y,s) \big{)}\, dyds \\ &+ \sigma \int_{0}^{t} \int_{D} G(x,y,t-s) W_{H} (dy,ds),
\end{split}
\end{equation}
where
\begin{equation} \label{Green's function}
G(x,y,t)=\sum\limits_{k\in\mN}e^{-k^{4}t}a_{k}(x)a_{k}(y),\ x,y\in D,\ t>0\quad \&\quad a_{0}(x) := \frac{1}{\sqrt{\pi}},\ a_{k}(x) := \sqrt{\frac{2}{\pi}}\cos(kx),\ k\neq0.
\end{equation} \label{formula of fr. stoch. int.}

Since $G(x,\diamond,t - \bullet) \textup{\textbf{1}}_{[0,t]}(\bullet) \in L^{2}(D \times [0,T])$ for any $(x,t) \in D\times[0,T]$, similarly to Remark \ref{well defined stoch. int.}, the last term of (\ref{mild formulation}) is well defined and equals almost surely
\begin{equation} \label{formula for the stoch. int.}
\int_{0}^{t} \int_{D} G(x,y,t-s) W_{H} (dy,ds) = \int_{0}^{T} \int_{D} \big{[} K^{*}_{H} \big{(} G(x,\diamond,t-\bullet) \textup{\textbf{1}}_{[0,t]}(\bullet) \big{)} \big{]} (y,s) W(dy,ds).
\end{equation}

The existence and uniqueness of a solution to (\ref{mild formulation}) was established in \cite{BJW}. More specifically, the authors proved that, if $H \in \big{(} \frac{1}{2},1 \big{)}$ and deterministic initial data $u_{0}\in L^{p}(D)$ is given, then the problem (\ref{stoch. model}) admits a unique (global) mild solution $u\in C([0,T];L^{p}(D))$ for some suitable constants $p\geq4$. 
\section{Localization of $u(x,t)$ in $\mD^{1,2}$} \label{Section 3}
Consider the unique solution $u$ of (\ref{mild formulation}). The aim of this section is to prove that $u(x,t)\in\mD^{1,2}_{\textup{\text{loc}}}$ for all $(x,t)\in D\times[0,T]$. To do so, we are going to construct a suitable localizing sequence $\{(\Omega_{n},u_{n}(x,t))\}_{n \in \mN_{*}}$ as required by Definition \ref{Def. of local versions}. We will begin however by providing some very useful estimates for the stochastic integrals that we are going to be working with.
\subsection{Estimates for the stochastic integrals} \label{Subsection 3.1}
$ $\newline \indent
Before we proceed with this subsection's main lemma, we need to mention some important estimates from \cite{WEB1}. More specifically, there exist constants $c,C>0$ such that, for any $x,y\in D$ and any $t\in(0,T]$, the following estimates hold:
\begin{equation} \label{Green's function's estimates}
\begin{gathered}
i)\ |G(x,y,t)|\leq c\, t^{-\frac{1}{4}} \exp\Big{(}-C\, |x-y|^{\frac{4}{3}}\, t^{-\frac{1}{3}} \Big{)},\quad
ii)\ |G_{x}(x,y,t)|\leq c\, t^{-\frac{1}{2}} \exp\Big{(}-C\, |x-y|^{\frac{4}{3}}\, t^{-\frac{1}{3}}\Big{)}, \\
iii)\ |G_{yy}(x,y,t)|\leq c\, t^{-\frac{3}{4}} \exp\Big{(}-C\, |x-y|^{\frac{4}{3}}\, t^{-\frac{1}{3}}\Big{)}.
\end{gathered}
\end{equation}
Furthermore, given any constant $C>0$, there exists another constant $K=K(C)>0$ such that
\begin{equation} \label{specific calculation}
\int_{\mR}\exp\Big{(}-C\, |z|^{\frac{4}{3}}\, t^{-\frac{1}{3}} \Big{)}\, dz = K\, t^{\frac{1}{4}}.
\end{equation}
The estimates in (\ref{Green's function's estimates}) will turn out to be very useful and not only for the proof of the upcoming lemma.
\begin{lemma} \label{Estimates for stoch. int.}
For any $T>0,\, H \in \big{(} \frac{1}{2},1 \big{)}$ and any $p\geq2$, there exists a constant $c_{1}=c_{1}(H,p,T)>0$ such that, for every $0 \leq \zeta \leq t \leq T$,
\begin{equation} \label{first estimate}
E \Bigg{(} \bigg{\|} \int_{\zeta}^{t}\int_{D} G(\cdot,y,t-s) W_{H}(dy,ds) \bigg{\|}_{L^{\infty}(D)}^{p} \Bigg{)} \leq c_{1}\, (t-\zeta)^{\frac{p(5H-2)}{4}}.
\end{equation}
Furthermore, there exists a constant $c_{2}=c_{2}(H)>0$ such that, for every $0 \leq \zeta \leq t \leq T$, 
\begin{equation} \label{second estimate}
\int_{0}^{T} \int_{D} \big{\|} \big{[} K^{*}_{H} \big{(} G(\cdot,\diamond,t-\bullet) \textup{\textbf{1}}_{[\zeta,t]}(\bullet)  \big{)} \big{]} (y,s) \big{\|}^{2}_{L^{\infty}(D)}\, dyds \leq c_{2}\, (t-\zeta)^{\frac{4H-1}{2}}.
\end{equation}
\end{lemma}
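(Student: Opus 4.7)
Both estimates begin from the Wiener representation $(\ref{formula for the stoch. int.})$, which recasts the fractional stochastic integral as a Gaussian integral against the space-time white noise $W$ with integrand $K_{H}^{*}\big{(}G(\cdot,\diamond,t-\bullet)\textup{\textbf{1}}_{[\zeta,t]}(\bullet)\big{)}$. From here the two bounds diverge: $(\ref{first estimate})$ is obtained by combining Gaussianity with the embedding in Remark $\ref{R.R. 12}(1)$ to reduce the pointwise moment to a deterministic $L^{1/H}$-estimate of the Green's function, whereas $(\ref{second estimate})$ is obtained from the explicit form $(\ref{formula 1 for K_{H}^{*}})$ of $K_{H}^{*}$ together with the positivity of $\partial_{r}K_{H}$ from $(\ref{par. der. of K_H})$.

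\textbf{Plan for $(\ref{first estimate})$.} For fixed $x\in D$, the random variable $I(x):=\int_{\zeta}^{t}\int_{D}G(x,y,t-s)W_{H}(dy,ds)$ is centered Gaussian with variance $\|G(x,\cdot,t-\cdot)\textup{\textbf{1}}_{[\zeta,t]}\|_{\ch}^{2}$. Applying $L^{1/H}(D\times[0,T])\hookrightarrow\ch$ reduces bounding this variance to bounding $\big{(}\int_{\zeta}^{t}\int_{D}|G(x,y,t-s)|^{1/H}\,dyds\big{)}^{2H}$. Using $(\ref{Green's function's estimates})$(i) raised to the power $1/H$ together with the Gaussian identity $(\ref{specific calculation})$ applied with $C/H$, the inner spatial integral is majorized by $c\,(t-s)^{1/4-1/(4H)}$; integrating this over $s\in[\zeta,t]$ yields $c\,(t-\zeta)^{(5H-1)/(4H)}$, and raising to the power $2H$ produces the variance bound $c\,(t-\zeta)^{(5H-1)/2}$. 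By Gaussianity, $E|I(x)|^{p}\leq c_{p}\,(t-\zeta)^{p(5H-1)/4}$ uniformly in $x\in D$ and for every $p\geq2$. To pass from the pointwise estimate to the $L^{\infty}(D)$-moment, I would run the same variance estimate for the increment $I(x)-I(x')$, using a Hölder-in-$x$ bound on $G(x,y,t-s)-G(x',y,t-s)$ (derived from the eigenfunction expansion $(\ref{Green's function})$ and the pointwise estimates $(\ref{Green's function's estimates})$) that preserves the $L^{1/H}$-scaling, to obtain $E|I(x)-I(x')|^{p}\leq c\,|x-x'|^{p\gamma}(t-\zeta)^{p(5H-1)/4}$ for some exponent $\gamma>0$. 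A Kolmogorov/Garsia--Rodemich--Rumsey argument on the one-dimensional interval $D$, after enlarging $p$ so that $\gamma p>1$ and then reducing back via Jensen's inequality, then delivers $(\ref{first estimate})$.

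\textbf{Plan for $(\ref{second estimate})$.} From $(\ref{formula 1 for K_{H}^{*}})$, for $s\leq t$,
\[ \big{[} K_{H}^{*}\big{(}G(\cdot,\diamond,t-\bullet)\textup{\textbf{1}}_{[\zeta,t]}(\bullet)\big{)}\big{]}(y,s)=\int_{\max(s,\zeta)}^{t}G(\cdot,y,t-r)\,\dfrac{\partial K_{H}}{\partial r}(r,s)\,dr. \]
Since $\partial_{r}K_{H}(r,s)>0$ for $r>s$ by $(\ref{par. der. of K_H})$, the triangle inequality lets me pull $\sup_{x\in D}$ inside the $r$-integral, and $\sup_{x\in D}|G(x,y,t-r)|\leq c\,(t-r)^{-1/4}$ (a consequence of $(\ref{Green's function's estimates})$(i)) yields a majorant of the $L^{\infty}(D)$-norm that is independent of $y$. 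Squaring, using $m(D)=\pi$, and applying Fubini in the $s$-integral together with the identity
\[ \int_{0}^{\min(r,r')}\dfrac{\partial K_{H}}{\partial r}(r,s)\,\dfrac{\partial K_{H}}{\partial r'}(r',s)\,ds = H(2H-1)\,|r-r'|^{2H-2}, \]
obtained by differentiating $R_{H}(r,r')=\int_{0}^{\min(r,r')}K_{H}(r,\tau)K_{H}(r',\tau)\,d\tau$ (the boundary terms vanish because $K_{H}(r,r)=0$ for $H>\tfrac{1}{2}$), reduces the whole estimate to the deterministic double integral
\[ \int_{\zeta}^{t}\int_{\zeta}^{t}(t-r)^{-1/4}(t-r')^{-1/4}|r-r'|^{2H-2}\,drdr'. \]
Under the change of variables $u=t-r$, $u'=t-r'$ followed by the scaling $u=(t-\zeta)v$, this factors as $(t-\zeta)^{(4H-1)/2}$ times a finite constant depending only on $H$ (the remaining double integral on $[0,1]^{2}$ converges because $H>\tfrac{1}{2}$).

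\textbf{Main obstacle.} The main difficulty is the upgrade from the pointwise Gaussian variance estimate to the $L^{\infty}(D)$-moment estimate in $(\ref{first estimate})$ without degrading the sharp time scaling $(t-\zeta)^{p(5H-1)/4}$. This hinges on choosing a Hölder-in-$x$ estimate for $G$ whose $L^{1/H}$-norm in $(y,s)$ decays as a positive power of $|x-x'|$ without spoiling the $(t-\zeta)$-scaling, and on carefully balancing the spatial Hölder exponent $\gamma$ against the Kolmogorov threshold $\gamma p>1$. By contrast, the computation for $(\ref{second estimate})$ is essentially mechanical once the positivity of $\partial_{r}K_{H}$ has been used to commute $\sup$ and integral.
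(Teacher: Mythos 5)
Your plan for the second estimate (\ref{second estimate}) is correct and takes a genuinely different route from the paper: after using the positivity of $\partial_{r}K_{H}$ to pass the supremum in $x$ inside (exactly as the paper does), you compute the resulting $L^{2}$-norm explicitly via the covariance identity $\int_{0}^{\min(r,r')}\partial_{r}K_{H}(r,s)\,\partial_{r'}K_{H}(r',s)\,ds=H(2H-1)|r-r'|^{2H-2}$ and a scaling argument, whereas the paper invokes the abstract isometry $\|K_{H}^{*}(\Psi)\|_{L^{2}(D\times[0,T])}=\|\Psi\|_{\ch}$ followed by the embedding $L^{1/H}(D\times[0,T])\hookrightarrow\ch$. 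Both yield $c\,(t-\zeta)^{(4H-1)/2}$; your version is more computational but self-contained, the paper's reuses machinery already set up in Subsection \ref{Subsection 2.2}. Your pointwise variance computation for (\ref{first estimate}) (embedding into $\ch$, estimate (\ref{Green's function's estimates})(i), identity (\ref{specific calculation}), exponent $(5H-1)/(4H)$ raised to $2H$) also coincides with the paper's core calculation.

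The genuine gap is in your upgrade of (\ref{first estimate}) from the pointwise moment to the $L^{\infty}(D)$-moment. Your Kolmogorov/Garsia--Rodemich--Rumsey plan hinges on an increment bound $E|I(x)-I(x')|^{p}\leq c\,|x-x'|^{p\gamma}(t-\zeta)^{p(5H-1)/4}$ with $\gamma>0$, and this cannot hold: any spatial H\"older gain on the Green's function costs a corresponding worsening of the time singularity (schematically $|G(x,y,\tau)-G(x',y,\tau)|\lesssim |x-x'|^{\gamma}\tau^{-(1+\gamma)/4}\exp(\dots)$), so running your own $L^{1/H}$ computation on the increment produces at best $c\,|x-x'|^{p\gamma}(t-\zeta)^{p(5H-1)/4-p\gamma/4}$. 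Feeding this into GRR, or using the two-regime canonical metric in a Dudley entropy bound, yields $(t-\zeta)^{p(5H-1)/4-p\gamma/4}$ or at best a multiplicative factor $(1+\log\frac{1}{t-\zeta})^{p/2}$ --- in either case strictly weaker than the stated sharp power. You correctly identify this as the main obstacle, but it is not a technicality to be balanced away by choosing $\gamma$ and $p$; it is an intrinsic loss of the chaining route. The paper avoids it entirely: it bounds the left-hand side by the running supremum in $\tau$ of the $L^{\infty}(D)$-valued stochastic convolution and applies a Burkholder--Davis--Gundy (maximal) inequality that dominates the $p$-th moment of the supremum directly by $\big\|\int_{0}^{T}\int_{D}|[K_{H}^{*}(\Phi)](y,s)|^{2}\,dyds\big\|_{L^{\infty}(D)}^{p/2}$, i.e.\ by the $L^{\infty}(D)$-norm of the \emph{deterministic} quadratic variation; the sup over $x$ then rides along for free through the isometry and embedding steps. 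To repair your proof you would need either this $L^{\infty}(D)$-valued BDG inequality or some other device (e.g.\ a factorization argument) that does not trade time regularity for space regularity.
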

\begin{proof}
For ease of use, we define the following functions
\begin{equation*}
\Phi(y,s\, ;\, \zeta,x,t) := G(x,y,t-s) \textup{\textbf{1}}_{[\zeta,t]}(s)\quad \&\quad
\Psi(y,s\, ;\, \zeta,t) := \big{\|} \Phi(y,s\, ;\, \zeta,\, \cdot\, ,t) \big{\|}_{L^{\infty}(D)}.
\end{equation*}
We note that $\Psi$ is well defined (one can either see that directly from the series definition of $G$ or more easily from the estimate (\ref{Green's function's estimates}) $i)$). Furthermore, it is easy to prove that $\Phi,\Psi\in L^{2} ( D\times[0,T] )$ thus, according to 1) of Remark \ref{R.R. 12}, it holds $\Phi,\Psi \in \ch$ meaning that the action of the operator $K_{H}^{*}$ on them is well defined. Lastly, again for ease of use, we define the processes
$$M_{\tau}(x\, ;\, \zeta,\, t) := \int_{0}^{\tau} \int_{D} \big{[} K^{*}_{H} ( \Phi ) \big{]} (y,s\, ;\, \zeta,\, x\, ,t) W(dy,ds)\quad \&\quad M^*(x\, ;\, \zeta,\, t) := \sup_{\tau \in [0,T]} |M_{\tau}(x\, ;\, \zeta,\, t)|.$$
We note that, due to the preceding remarks on $\Phi$, for each fixed $x \in D$, the process $\tau \mapsto M_{\tau}(x\, ;\, \zeta,\, t)$ is a continuous, centered, square-integrable $\mathcal{F}_{\tau}$-martingale with $M_0(x\, ;\, \zeta,\, t) = 0$ and deterministic quadratic variation
$$\langle M(x\, ;\, \zeta,\, t) \rangle_{\tau} = \left \| \big{[} K_{H}^{*} ( \Phi ) \big{]} (\diamond, \bullet\, ;\, \zeta,\, x\, ,t) \right \|^2_{L^{2}(D\times[0,\tau])}.$$
For any $x,x' \in D$ and any $p \geq 2$, the reverse triangle inequality for suprema yields
$$E \Big{(} |M^*(x\, ;\, \zeta,\, t) - M^*(x'\, ;\, \zeta,\, t)|^p \Big{)} \leq E \bigg{(} \sup_{\tau \in [0,T]} |M_{\tau}(x\, ;\, \zeta,\, t) - M_{\tau}(x'\, ;\, \zeta,\, t)|^p \bigg{)}.$$
Now, the difference $M_{\tau}(x\, ;\, \zeta,\, t) - M_{\tau}(x'\, ;\, \zeta,\, t)$ is a continuous martingale vanishing at zero so, applying the Burkholder-Davis-Gundy inequality (cf. \cite{RY}, Theorem 4.1), we get
$$E \bigg{(} \sup_{\tau \in [0,T]} |M_{\tau}(x\, ;\, \zeta,\, t) - M_{\tau}(x'\, ;\, \zeta,\, t)|^p \bigg{)} \leq c(p) \big{\|} \big{[} K_{H}^{*} ( \Phi ) \big{]} (\diamond, \bullet\, ;\, \zeta,\, x\, ,t) - \big{[} K_{H}^{*} ( \Phi ) \big{]} (\diamond, \bullet\, ;\, \zeta,\, x'\, ,t) \big{\|}^p_{L^{2}(D\times[0,T])}.$$
Since $K_{H}^{*}$ is an isometry between $\ch$ and $L^{2} ( D\times[0,T] )$, we have
\begin{equation*}
\left \| \big{[} K_{H}^{*} ( \Phi ) \big{]} (\diamond, \bullet\, ;\, \zeta,\, x\, ,t) - \big{[} K_{H}^{*} ( \Phi ) \big{]} (\diamond, \bullet\, ;\, \zeta,\, x'\, ,t) \right \|^p_{L^{2}(D\times[0,T])} = \left \| \Phi(\diamond, \bullet\, ;\, \zeta,\, x\, ,t) - \Phi(\diamond, \bullet\, ;\, \zeta,\, x'\, ,t)\right \|_{\ch}^p.
\end{equation*}
Due to the aforementioned continuous embedding $L^{\frac{1}{H}}(D\times[0,T])\hookrightarrow\ch$, it holds
\begingroup
\allowdisplaybreaks
\begin{align*}
\left \| \Phi(\diamond, \bullet\, ;\, \zeta,\, x\, ,t) - \Phi(\diamond, \bullet\, ;\, \zeta,\, x'\, ,t)\right \|_{\ch}^p &\leq c(H,p) \left \| \Phi(\diamond, \bullet\, ;\, \zeta,\, x\, ,t) - \Phi(\diamond, \bullet\, ;\, \zeta,\, x'\, ,t) \right \|_{L^{\frac{1}{H}}(D\times[0,T])}^{p} \\
&= c(H,p) \bigg{(} \int_{\zeta}^{t} \int_{D} | G(x,y,t-s) - G(x',y,t-s)|^{\frac{1}{H}}\, dyds \bigg{)}^{pH} \\
\overset{\text{F.T.C.}}&{=} c(H,p) \bigg{(} \int_{\zeta}^{t} \int_{D} \bigg{|} \int_{x'}^x G_{\xi}(\xi,y,t-s)\, d\xi \bigg{|}^{\frac{1}{H}}\, dyds \bigg{)}^{pH} \\
\overset{\text{Jensen}}&{\leq} c(H,p)\, |x - x'|^{p -pH} \bigg{(} \int_{\zeta}^{t} \int_{D} \int_{x'}^x |G_{\xi}(\xi,y,t-s)|^{\frac{1}{H}}\, d\xi dyds \bigg{)}^{pH} \\
&\shs{-3.5}\overset{\substack{\text{Fubini}\\[0.05cm](\ref{Green's function's estimates})\ ii)}}{\leq} c(H,p)\, |x - x'|^{p -pH} \bigg{(} \int_{\zeta}^{t} (t-s)^{-\frac{1}{2H}} \int_{x'}^x \int_{D} \exp\Big{(}-C\, |\xi -y|^{\frac{4}{3}}\, (t-s)^{-\frac{1}{3}} \Big{)} dy d\xi ds \bigg{)}^{pH} \\
\overset{(\ref{specific calculation})}&{\leq} c(H,p)\, |x - x'|^{p - pH} \bigg{(} |x - x'| \int_{\zeta}^{t} (t-s)^{-\frac{1}{2H}} (t-s)^{\frac{1}{4}}\, ds \bigg{)}^{pH} \\
&\shs{-1.3}= c(H,p)\, |x - x'|^p \left ( \int_{\zeta}^{t} (t-s)^{\frac{H-2}{4H}}\, ds \right )^{pH} = c(H,p)\, |x - x'|^p\, (t-\zeta)^{\frac{p(5H-2)}{4}}
\end{align*}
\endgroup
(the last integral is finite if and only if  $\frac{H-2}{4H} > -1 \iff H > \frac{2}{5}$ which holds since $H > \frac{1}{2}$) and thus 
\begin{equation} \label{Kol-Che}
E \Big{(} |M^*(x\, ;\, \zeta,\, t) - M^*(x'\, ;\, \zeta,\, t)|^p \Big{)} \leq c(H,p)\, |x - x'|^p\, (t-\zeta)^{\frac{p(5H-2)}{4}}.
\end{equation}
As a consequence, according to the one-dimensional Kolmogorov-Chentsov theorem (cf. \cite{RY}, Theorem 2.1), the process $x \mapsto M^*(x\, ;\, \zeta,\, t)$ admits a continuous modification whose paths are locally $\gamma$-H\um{o}lder for every $\gamma \in \Big{(}0, 1 - \frac{1}{p} \Big{)}$ making $\sup\limits_{x \in D} M^*(x\, ;\, \zeta,\, t)$ well-defined and measurable. Furthermore, it is easy to verify now that the mapping $(x,\tau) \mapsto M_{\tau}(x\, ;\, \zeta,\, t)$ is jointly measurable, making any future exchange of spatial and temporal suprema valid.\\
By a simpler, analogous procedure to the one we followed for (\ref{Kol-Che}), using (\ref{Green's function's estimates}) $i)$ in place of (\ref{Green's function's estimates}) $ii)$, we can also calculate that, for any $x \in D$ and any $p \geq 2$
\begin{equation} \label{Kol-Che 2}
E \big{(} |M^*(x\, ;\, \zeta,\, t)|^p \big{)} \leq c(H,p) (t-\zeta)^{\frac{p(5H-1)}{4}} = c(H,p)\, (t-\zeta)^{\frac{p}{4}}(t-\zeta)^{\frac{p(5H-2)}{4}} \leq c(H,p,T) (t-\zeta)^{\frac{p(5H-2)}{4}}
\end{equation}
(we note that the bound is independent of $x$ since, upon applying (\ref{Green's function's estimates}) $i)$, the substitution $z = y - x$ and relation (\ref{specific calculation}) absorb all $x$-dependence). \\ Applying now the Garsia-Rodemich-Rumsey lemma (cf. \cite{GRR}, p. 566) for $\varPsi(u) = u^p$ and $\varrho(u) = u^{\alpha}$ with $\alpha \in \Big{(} \frac{2}{p}, 1 + \frac{1}{p} \Big{)}$ for $p \geq 2$, we obtain
$$\sup_{x \in D} |M^*(x\, ;\, \zeta,\, t)| \leq |M^*(0\, ;\, \zeta,\, t)| + C(p)\, \bigg{(} \int_D \int_D \frac{|M^*(x\, ;\, \zeta,\, t) - M^*(y\, ;\, \zeta,\, t)|^p}{|x - y|^{\alpha p}} dxdy \bigg{)}^{\frac{1}{p}}$$
(the finite constant $C(p)$ is a direct consequence of the lower bound $\alpha > \frac{2}{p}$). Taking $p$-th power, expectation and then applying Tonelli's theorem, yields
\begingroup
\allowdisplaybreaks
\begin{align*}
E \bigg{(} \sup_{x \in D} |&M^*(x\, ;\, \zeta,\, t)|^p \bigg{)} \leq c(p) \left ( E \big{(} |M^*(0\, ;\, \zeta,\, t)|^p \big{)} + \int_D \int_D \frac{E \left ( |M^*(x\, ;\, \zeta,\, t) - M^*(y\, ;\, \zeta,\, t)|^p \right )}{|x - y|^{\alpha p}}\, dxdy \right ) \\
\overset{\substack{(\ref{Kol-Che})\\(\ref{Kol-Che 2})}}&{\leq} c(p)\, (t-\zeta)^{\frac{p(5H-2)}{4}} \left ( c(H,p,T) + c(H,p)\int_D \int_D |x - y|^{p(1 - \alpha)}\, dxdy \right ) = c(H,p,T)\, (t-\zeta)^{\frac{p(5H-2)}{4}}
\end{align*}
(the double integral is finite if and only if $p(1 - \alpha) > -1 \iff \alpha < 1 + \frac{1}{p}$). Gathering now all the above, we have 
\begin{align*}
E \bigg{(} \bigg{\|} \int_{\zeta}^{t} \int_{D} G(\cdot,y,t&-s) W_{H}(dy,ds) \bigg{\|}_{L^{\infty}(D)}^{p} \bigg{)} = E \bigg{(} \sup_{x \in D} |M_T(x\, ;\, \zeta,\, t)|^p \bigg{)} \leq E \bigg{(} \sup_{\tau \in [0,T]} \sup_{x \in D} |M_{\tau}(x\, ;\, \zeta,\, t)|^p \bigg{)} \\
&= E \bigg{(} \sup_{x \in D} \sup_{\tau \in [0,T]} |M_{\tau}(x\, ;\, \zeta,\, t)|^p \bigg{)} = E \bigg{(} \sup_{x \in D} |M^*(x\, ;\, \zeta,\, t)|^p \bigg{)} \leq c(H,p,T)\, (t-\zeta)^{\frac{p(5H-2)}{4}}.
\end{align*}
\endgroup
which concludes the proof of (\ref{first estimate}). Regarding now the second estimate, we have
\begin{align} \label{relation between Phi,Psi}
\begin{split}
\big{\|} \big{[} K_{H}^{*} ( \Phi ) \big{]} (y,s\, ;\, \zeta,\, \cdot\, ,t) \big{\|}_{L^{\infty}(D)} \overset{(\ref{formula 1 for K_{H}^{*}})}&{=} \Bigg{\|} \int_{s}^{T} \Phi (y,r\, ;\, \zeta,\, \cdot\, ,t) \frac{\partial K_{H}}{\partial r}(r,s)\, dr \Bigg{\|}_{L^{\infty}(D)} \\[0.1cm]
&\leq \int_{s}^{T} \big{\|} \Phi (y,r\, ;\, \zeta,\, \cdot\, ,t) \big{\|}_{L^{\infty}(D)} \frac{\partial K_{H}}{\partial r}(r,s)\, dr 
\\[0.1cm]
&= \int_{s}^{T} \Psi(y,r\, ;\, \zeta,t) \frac{\partial K_{H}}{\partial r}(r,s)\, dr  \overset{(\ref{formula 1 for K_{H}^{*}})}{=} \big{[} K_{H}^{*} ( \Psi ) \big{]} (y,s\, ;\, \zeta,t)
\end{split}
\end{align}
thus
\begingroup
\allowdisplaybreaks
\begin{align*}
\int_{0}^{T} \int_{D} \big{\|} \big{[} K^{*}_{H} \big{(} G(\cdot,\diamond,t-\bullet) \textup{\textbf{1}}_{[\zeta,t]}(\bullet)  \big{)} \big{]} (y,s) \big{\|}^{2}_{L^{\infty}(D)}\, dyds &= \int_{0}^{T} \int_{D} \big{\|} \big{[} K_{H}^{*} ( \Phi ) \big{]} (y,s\, ;\, \zeta,\, \cdot\, ,t) \big{\|}^{2}_{L^{\infty}(D)}\, dyds \\[0.1cm]
\overset{(\ref{relation between Phi,Psi})}&{\leq} \int_{0}^{T} \int_{D} \big{(} \big{[} K_{H}^{*} ( \Psi ) \big{]} (y,s\, ;\, \zeta,t) \big{)}^{2}\, dyds \\[0.1cm]
&= \big{\|} \big{[} K_{H}^{*} ( \Psi ) \big{]} (\diamond,\bullet\, ;\, \zeta,t) \big{\|}^{2}_{L^{2}(D\times[0,T])}.
\end{align*} 
\endgroup
As we already mentioned, $K_{H}^{*}$ is an isometry between $\ch$ and $L^{2} ( D\times[0,T] )$, so
\begin{align*}
\big{\|} \big{[} K_{H}^{*} ( \Psi ) \big{]} (\diamond,\bullet\, ;\, \zeta,t) \big{\|}^{2}_{L^{2}(D\times[0,T])} = \big{\|} \Psi (\diamond,\bullet\, ;\, \zeta,t) \big{\|}^{2}_{\ch} \leq c(H) \big{\|} \Psi (\diamond,\bullet\, ;\, \zeta,t) \big{\|}^{2}_{L^{\frac{1}{H}}(D\times[0,T])}.
\end{align*}
Lastly, 
\begingroup
\allowdisplaybreaks
\begin{align*}
\big{\|} \Psi (\diamond,\bullet\, ;\, \zeta,t) \big{\|}^{2}_{L^{\frac{1}{H}}(D\times[0,T])} &= \bigg{(} \int_{0}^{T} \int_{D} \textup{\textbf{1}}_{[\zeta,t]}(s) \left \| G(\cdot,y,t-s) \right \|_{L^{\infty}(D)}^{\frac{1}{H}} dyds \bigg{)}^{2H} \\
&= \bigg{(} \int_{\zeta}^{t} \int_{D} \left \| G(\cdot,y,t-s) \right \|_{L^{\infty}(D)}^{\frac{1}{H}} dyds \bigg{)}^{2H} \\
\overset{(\ref{Green's function's estimates})\ i)}&{\leq} c(H) \bigg{(} \int_{\zeta}^{t} (t-s)^{-\frac{1}{4H}} \bigg{[} \int_{D} \Big{\|} \exp\Big{(}-C\, |\cdot-y|^{\frac{4}{3}}\, (t-s)^{-\frac{1}{3}} \Big{)} \Big{\|}^{\frac{1}{H}}_{L^{\infty}(D)} dy \bigg{]} ds \bigg{)}^{2H} \\
&\leq c(H) \bigg{(} \int_{\zeta}^{t} (t-s)^{-\frac{1}{4H}} \bigg{[} \int_{D} 1\ dy \bigg{]} ds \bigg{)}^{2H} \\
&= c(H) \bigg{(} \int_{\zeta}^{t} (t-s)^{-\frac{1}{4H}}\, ds \bigg{)}^{2H} = c(H) (t-\zeta)^{\frac{4H-1}{2}}.
\end{align*}
Gathering now all the above we finally obtain (\ref{second estimate}).
\endgroup
\end{proof}
\subsection{Regularity of the solution $u$} \label{Subsection 3.2}
$ $\newline \indent
Regarding the regularity of $u$, each term on the right-hand side of (\ref{mild formulation}) has been separately studied and various results have been acquired depending on the regularity of the initial data $u_{0}$. Our hypotheses and initial data are in accordance with the assumptions $1-4$ of \cite{WEB1} and Theorem $1.1$ of \cite{BJW}. Thus, we can invoke propositions from therein to prove the following lemma:
\begin{lemma} \label{continuity of u}
The solution $u$ of $(\ref{mild formulation})$ is
almost surely continuous in space-time for any $(x,t)\in D\times[0,T]$.
\end{lemma}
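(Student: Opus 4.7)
The plan is to decompose $u(x,t)$ via the mild formulation \eqref{mild formulation} into the three pieces
\begin{equation*}
U_{0}(x,t) := \int_{D} G(x,y,t) u_{0}(y)\, dy,\qquad U_{f}(x,t) := \int_{0}^{t}\!\int_{D} G_{yy}(x,y,t-s) f\bigl(u(y,s)\bigr)\, dyds,
\end{equation*}
and $U_{W}(x,t) := \sigma \int_{0}^{t}\!\int_{D} G(x,y,t-s) W_{H}(dy,ds)$, and to verify almost-sure joint continuity of each piece on $D\times[0,T]$ separately, concluding by the triangle inequality.

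For $U_{0}$, continuity on $D\times(0,T]$ is immediate from the series \eqref{Green's function} (the factor $e^{-k^{4}t}$ yields uniform convergence of all derivatives away from $t=0$), while continuity at $t=0$ with value $u_{0}(x)$ follows from the standard approximation-to-identity property of the Neumann heat kernel together with $u_{0}\in C(D)$. For $U_{f}$, I would invoke Theorem~1.1 of \cite{BJW} to obtain $u\in C([0,T];L^{p}(D))$ for a suitable $p\geq 4$; since $f$ is cubic, $f(u(\cdot,s))$ is uniformly bounded in $L^{p/3}(D)$ in $s$, and then a direct calculation using \eqref{Green's function's estimates}~$ii)$, the scaling \eqref{specific calculation}, and H\"older's inequality (exactly as in \cite{WEB1}) yields joint continuity.

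The genuine obstacle is the stochastic convolution $U_{W}$, where the fractional nature of the noise truly enters. My plan is to apply Kolmogorov's criterion on $D\times[0,T]$: for some $p\geq 2$ large enough and exponents $\alpha,\beta>0$ with $p\alpha,\,p\beta>2$, I would establish
\begin{equation*}
E\bigl(|U_{W}(x,t) - U_{W}(x',t')|^{p}\bigr) \;\leq\; c\, \bigl(|x-x'|^{p\beta} + |t-t'|^{p\alpha}\bigr).
\end{equation*}
For the time increment ($0\leq t'\leq t$), I would split
\begin{equation*}
U_{W}(x,t) - U_{W}(x,t') = \sigma\!\int_{t'}^{t}\!\int_{D} G(x,y,t-s)\, W_{H}(dy,ds) + \sigma\!\int_{0}^{t'}\!\int_{D}\bigl(G(x,y,t-s) - G(x,y,t'-s)\bigr) W_{H}(dy,ds),
\end{equation*}
bound the first summand via Lemma~\ref{Estimates for stoch. int.} (equation \eqref{first estimate}), which gives exponent $\tfrac{p(5H-1)}{4}$, and treat the second summand by the same Burkholder--Davis--Gundy / $K_{H}^{*}$-isometry / $L^{1/H}\hookrightarrow\ch$ chain used in the proof of that lemma, now with $G(x,\cdot,t-\cdot)-G(x,\cdot,t'-\cdot)$ in place of $G(x,\cdot,t-\cdot)$; the required time H\"older regularity of $G$ comes from classical bounds on $\partial_{t}G$ analogous to \eqref{Green's function's estimates}~$ii)$ established in \cite{WEB1}. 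The spatial increment is analogous, using the H\"older regularity of $G$ in its first variable. The delicate check is that $\tfrac{p(5H-1)}{4}$ (and its spatial analogue) exceed $2$ for $p$ large enough, which is immediate since $H>\tfrac{1}{2}$ gives $5H-1>\tfrac{3}{2}$; Kolmogorov's theorem then produces a continuous modification, which must agree a.s.\ with $U_{W}$ by the uniqueness of mild solutions from \cite{BJW}.
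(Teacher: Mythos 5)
Your proposal is correct and follows essentially the same route as the paper: the identical three-term decomposition of the mild formulation, with the deterministic term handled by the smoothing of $G$, the nonlinear term by the regularity $u\in C([0,T];L^{p}(D))$ from \cite{BJW} together with the estimates of \cite{WEB1}, and the stochastic convolution by its H\"older continuity in space-time. The only difference is one of packaging: where the paper simply cites Lemma~4.1 of \cite{BJW} for the a.s.\ H\"older continuity of the stochastic convolution, you sketch the underlying Kolmogorov-criterion argument yourself (using the increment bound from Lemma~\ref{Estimates for stoch. int.} and the $K_{H}^{*}$-isometry/$L^{1/H}\hookrightarrow\ch$ chain), which is exactly the content of that cited lemma.
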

\begin{proof}
The right-hand side of (\ref{mild formulation}) consists of three terms:
$$A_{1}(x,t):=\int_{D}G(x,y,t) u_{0}(y)\, dy,\quad A_{2}(x,t;\omega):=\int_{0}^{t}\int_{D} G_{yy}(x,y,t-s) f\big{(}u(y,s)\big{)}\, dyds,$$
$$A_{3}(x,t;\omega):=\sigma \int_{0}^{t} \int_{D} G(x,y,t-s) W_{H} (dy,ds).$$
Since $u_{0}$ is continuous, Lemma 2.1 of \cite{WEB1} guarantees the space-time continuity of the term $A_{1}$. Similarly, Lemma 4.1 of \cite{BJW} establishes the $\mu$-H\um{o}lder in $t$ and $\nu$-H\um{o}lder in $x$ almost sure continuity of the term $A_{3}$, where $\mu \in \big{[} 0,H-\frac{1}{4} \big{)}$ and $\nu\in[0,1]$ (our case corresponds to $d=1$). Lastly, pages 797-799 of \cite{WEB1} are dedicated to the study of $A_{2}$. The almost sure continuity of this term can be shown by using a factorization method (applicable here since \cite{BJW} guarantees  $u\in C([0,T];L^{p}(D))$ a.s. for some suitable constants $p\geq4$) to express it as 
$$\dfrac{\sin(\pi a)}{\pi}\mathcal{F}\big{(}\mathcal{K}(u)\big{)}(x,t)$$
for all $(x,t)\in D\times[0,T]$, where $a \in (0,1)$ is fixed and
\begin{align*}
\mathcal{F}(v)(x,t)&:=\int_{0}^{t}\int_{D} G(x,z,t-s)(t-s)^{-a}v(z,s)\, dzds,\\
\mathcal{K}(v)(z,s)&:=\int_{0}^{s}\int_{D} G_{yy}(z,y,s-s')(s-s')^{a-1}f\big{(}v(y,s')\big{)}\, dyds',
\end{align*}
and then proving that the operator $\mathcal{K}$ maps $L^{\infty}([0,T];L^{q}(D))$ into itself and $\mathcal{F}$ is H\um{o}lder continuous if $v\in L^{\infty}([0,T];L^{q}(D))$.
\end{proof}
\subsection{Existence of a localizing sequence $\{(\Omega_{n},u_{n}(x,t))\}_{n \in \mN_{*}}$ (Part 1)} \label{Subsection 3.3}
$ $\newline \indent
This subsection's purpose is to define a sequence $\{\Omega_{n}\}_{n\in\mN_{*}} \subset \cf$ such that $\Omega_{n}\uparrow\Omega$ a.s. and construct a sequence of processes $\{u_{n}\}_{n\in\mN_{*}}$  such that $u_{n}(x,t)=u(x,t)$ a.s. on $\Omega_{n}$ for all $(x,t)\in D\times[0,T]$. Note however that this will only partially accomplish our aim since we also need to prove that $\{u_{n}(x,t)\}_{n\in\mN_{*}} \subset \mD^{1,2}$ for every $(x,t)\in D\times[0,T]$, something that will be done in the next subsection.
\\[0.3cm]
\indent We define the sequence of sets $\{\Omega_{n}\}_{n\in\mN_{*}}$ as
\begin{equation} \label{def. of Omega_n}
    \Omega_{n}:= \bigg{\{} \omega \in \Omega:\; \sup_{t \in [0,T]}\sup_{x \in D} |u(x,t;\omega)| < n \bigg{\}},\ n\in\mN_{*}.
\end{equation}

\begin{remark} \label{Remark about Omega_n |^ Omega}
Notice that $\Omega_1\subseteq\Omega_2\subseteq\cdots\subseteq \Omega$. Furthermore, if $A:=\{\omega\in\Omega\ :\ u(x,t;\omega)\ \text{is continuous}\}$, then $\mP(A)=1$ $($due to the almost sure continuity of $u)$. It is easy to verify that $\displaystyle A\subseteq\bigcup\limits_{n=1}^{+\infty}\Omega_{n}$ which in turn implies that $\displaystyle \mP\bigg{(}\bigcup\limits_{n=1}^{+\infty}\Omega_{n}\bigg{)}=1$. Due to the monotonicity of $\{\Omega_{n}\}_{n\in\mN_{*}}$, we obtain
$\displaystyle \lim\limits_{n\to+\infty} \mP(\Omega_{n}) = \mP \bigg{(} \bigcup\limits_{n=1}^{+\infty}\Omega_{n}\bigg{)}=1$.\\[0.1cm]
According to Definition \ref{Def. of local versions}., we have shown that $\Omega_{n}\uparrow\Omega$ almost surely.
\end{remark}
Before we proceed, we introduce some preliminary constructions following the approach of Cardon-Weber, \cite{WEB1}. For all $n \in \mN_{*}$, let $H_{n}: [0,+\infty) \rightarrow [0,+\infty)$ be a sequence of $C^{1}$ cut-off functions satisfying the conditions
$$|H_{n}(x)|\leq 1,\ |H'_{n}(x)|\leq 2,\ \forall x\in[0,+\infty)\quad \&\quad H_{n}(x):=
\begin{cases}
1, & \text{if  $x\in[0,n]$},\\
0, & \text{if $x\in[n+1,+\infty)$}.
\end{cases}$$
and consider the sequence of functions $f_{n}:\mR\to\mR$ with $f_{n}(x):=H_{n}(|x|)f(x)$ for any $n\in\mN_{*}$. It is not hard to prove that, for fixed $n\in\mN_{*}$, $f_{n}\in C^{1}(\mR)$ and there exists a constant $c=c(n)>0$ such that
$$|f_{n}'(x)|\leq c,\ \forall x\in\mR.$$
As a consequence, the function $f_{n}$ is Lipschitz, with Lipschitz constant $c$.

Let now $n\in\mN_{*}$ be fixed. We consider the following cut-off stochastic integral equation
\begin{align} \label{mild formulation of u_n}
\begin{split}
u_{n}(x,t) =\int_{D}G(x,y,t) u_{0}(y)\, dy &+ \int_{0}^{t}\int_{D} G_{yy}(x,y,t-s) f_{n}\big{(}u_{n}(y,s)\big{)}\, dyds \\
&+ \sigma\int_{0}^{t}\int_{D} G(x,y,t-s) W_{H}(dy,ds),\ (x,t)\in D\times[0,T]
\end{split}
\end{align}
and we will prove the following lemma:
\begin{lemma} \label{main lemma 1}
A solution $u_{n}$ to equation $(\ref{mild formulation of u_n})$ exists and is unique, with uniquely defined paths almost surely on $\Omega_{n}$. In addition, for any $p\geq2$, there exists a constant $\cc=\cc(\sigma,f,H,n,p,T,u_{0})>0$ such that
\begin{equation} \label{sup estimate of u_n}
    \sup\limits_{t\in[0,T]} E \Big{(} \| u_{n}(\cdot,t) \|_{L^{\infty}(D)}^{p} \Big{)} \leq \cc.
\end{equation}
\end{lemma}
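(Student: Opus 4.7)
The plan is to prove existence and uniqueness for the cut-off equation (\ref{mild formulation of u_n}) by a Picard iteration argument, then identify $u_{n}$ with $u$ on $\Omega_{n}$ via the construction of the cut-off, and finally derive the moment estimate (\ref{sup estimate of u_n}) from the global boundedness of $f_{n}$ together with the estimates already established in Lemma \ref{Estimates for stoch. int.}.

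First, I would set
\begin{equation*}
u_{n}^{(0)}(x,t) := \int_{D} G(x,y,t) u_{0}(y)\, dy + \sigma\int_{0}^{t}\!\!\int_{D} G(x,y,t-s)\, W_{H}(dy,ds)
\end{equation*}
and, for $k\geq 0$, define iteratively
\begin{equation*}
u_{n}^{(k+1)}(x,t) := u_{n}^{(0)}(x,t) + \int_{0}^{t}\!\!\int_{D} G_{yy}(x,y,t-s)\, f_{n}\bigl(u_{n}^{(k)}(y,s)\bigr)\, dyds.
\end{equation*}
Using the global Lipschitz property of $f_{n}$ (noted just before the statement), together with estimate (\ref{Green's function's estimates})~$ii)$ and (\ref{specific calculation}), one has $\int_{D}|G_{yy}(x,y,t-s)|\,dy\leq c(t-s)^{-1/2}$ uniformly in $x$. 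Taking $L^{\infty}(D)$-norms, $p$-th powers, expectations, and applying H\"older's inequality yields a Volterra-type recursion
\begin{equation*}
\phi_{k+1}(t)\leq c(n,p,T)\int_{0}^{t}(t-s)^{-1/2}\,\phi_{k}(s)\,ds,\qquad \phi_{k}(t):=E\bigl(\|u_{n}^{(k+1)}(\cdot,t)-u_{n}^{(k)}(\cdot,t)\|_{L^{\infty}(D)}^{p}\bigr).
\end{equation*}
Since $(t-s)^{-1/2}$ is integrable, iterating this inequality shows that $\sum_{k}\phi_{k}(t)^{1/p}$ converges uniformly on $[0,T]$, giving a continuous (in $(x,t)$, in light of Lemma \ref{continuity of u} for the stochastic and deterministic summands of $u_{n}^{(0)}$) limit $u_{n}$ that satisfies (\ref{mild formulation of u_n}). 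Uniqueness follows from the same argument applied to the difference of any two solutions combined with (fractional) Gronwall's lemma.

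To identify $u_{n}$ with $u$ on $\Omega_{n}$: by the definition (\ref{def. of Omega_n}), for any $\omega\in\Omega_{n}$ the entire path $(y,s)\mapsto u(y,s;\omega)$ is bounded in absolute value by $n$, so $H_{n}(|u(y,s;\omega)|)=1$ and hence $f_{n}(u(y,s;\omega))=f(u(y,s;\omega))$ for every $(y,s)\in D\times[0,T]$. Substituting $u$ into the right-hand side of (\ref{mild formulation of u_n}) therefore coincides pathwise with the right-hand side of (\ref{mild formulation}) on $\Omega_{n}$, so $u$ itself satisfies (\ref{mild formulation of u_n}) on $\Omega_{n}$. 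The pathwise uniqueness just proved then forces $u_{n}(x,t)=u(x,t)$ almost surely on $\Omega_{n}$ for every $(x,t)\in D\times[0,T]$.

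Finally, for (\ref{sup estimate of u_n}), I would exploit the fact that $f_{n}$ is globally bounded: since $H_{n}$ is supported in $[0,n+1]$ and $|H_{n}|\leq 1$, we have $\|f_{n}\|_{L^{\infty}(\mR)}\leq\sup_{|x|\leq n+1}|f(x)|=:M_{n}$. The drift term in (\ref{mild formulation of u_n}) is then pathwise bounded in $L^{\infty}(D)$ by $M_{n}\int_{0}^{t}\!\!\int_{D}|G_{yy}(x,y,t-s)|\,dyds\leq c(n)\sqrt{T}$; the initial-data term is bounded by $c\|u_{0}\|_{L^{\infty}(D)}$; and the $p$-th moment of the stochastic integral is controlled by Lemma \ref{Estimates for stoch. int.} with $\zeta=0$. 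Combining these three bounds through $(a+b+c)^{p}\leq 3^{p-1}(a^{p}+b^{p}+c^{p})$ gives (\ref{sup estimate of u_n}). The main obstacle I expect is the first step: securing a contraction in a function space strong enough to yield both pathwise uniqueness and the $L^{\infty}(D)$-regularity required later, given that the kernel $(t-s)^{-1/2}$ arising from $G_{yy}$ is singular; the correct handling relies on iterating the Volterra inequality rather than trying to close a direct Banach fixed-point estimate in one step.
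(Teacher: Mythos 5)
Your proposal is correct in substance and follows the same Picard-iteration skeleton as the paper, but it differs in two ways worth recording. First, for the moment bound (\ref{sup estimate of u_n}) you exploit the global boundedness of $f_{n}$ (namely $\|f_{n}\|_{L^{\infty}(\mR)}\leq\sup_{|x|\leq n+1}|f(x)|$) to bound the drift term pathwise by $c(n)\sqrt{T}$, and then just add the initial-data bound and the stochastic-integral moment from Lemma \ref{Estimates for stoch. int.}. The paper instead only uses the Lipschitz property of $f_{n}$ and recovers (\ref{sup estimate of u_n}) by telescoping the Picard increments and summing the resulting $c^{k}/k!$ series; your route is shorter and arguably cleaner, at the cost of being tied to the specific cut-off structure. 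Second, your Volterra recursion keeps the singular kernel $(t-s)^{-1/2}$ acting on $\phi_{k}$ (which one obtains by Jensen with respect to the measure $(t-s)^{-1/2}\,ds$), whereas the paper applies H\"older to remove the singularity, which forces $p>2$ and a separate interpolation step for $p=2$; your version works uniformly for all $p\geq2$. Both recursions give the factorial decay needed for convergence of the scheme.

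Two small caveats. Your identification of $u_{n}$ with $u$ on $\Omega_{n}$ by ``pathwise uniqueness'' is slightly too quick: the uniqueness you prove is for solutions defined on all of $\Omega$ (an $L^{p}(\Omega)$-statement), while $u$ satisfies the cut-off equation only for $\omega\in\Omega_{n}$; one needs a localized Gr\"onwall argument on the difference restricted to $\Omega_{n}$, which is what the paper does (and note this identification is carried out in the text after the lemma, not inside it). Also, your claim that the limit $u_{n}$ is space-time continuous on all of $\Omega$ does not follow directly from convergence in $L^{\infty}\big([0,T];L^{p}(\Omega;L^{\infty}(D))\big)$, since the expectation sits outside the supremum in $t$; the paper only obtains continuity of $u_{n}$ on $\Omega_{n}$, by identifying it there with the continuous process $u$. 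Neither point affects the validity of the statement itself.
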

\begin{proof}
We define
$$u_{n,0}(x,t):=\int_{D} G(x,y,t) u_{0}(y)\, dy$$
and, for any $k\in\mN$, we consider the following Picard iteration scheme
\begin{align} \label{Picard scheme}
\begin{split}
u_{n,k+1}(x,t) = u_{n,0}(x,t) &+ \int_{0}^{t}\int_{D} G_{yy}(x,y,t-s) f_{n}\big{(}u_{n,k}(y,s)\big{)}\, dyds \\
&+ \sigma\int_{0}^{t}\int_{D} G(x,y,t-s) W_{H}(dy,ds).
\end{split}
\end{align}
Relation (\ref{Picard scheme}) yields for any $k\in\mN_{*}$
\begin{align*}
|u_{n,k+1}(x,t)-u_{n,k}(x,t)| &\leq 
\int_{0}^{t} \int_{D} |G_{yy}(x,y,t-s)|\, \big{|}f_{n}\big{(}u_{n,k}(y,s)\big{)}-f_{n}\big{(}u_{n,k-1}(s,y)\big{)}\big{|}\, dyds\\
&\leq c(n) \int_{0}^{t} \int_{D} |G_{yy}(x,y,t-s)|\, |u_{n,k}(y,s)-u_{n,k-1}(s,y)|\, dyds,
\end{align*}
where for the second inequality we used the fact that $f_{n}$ is Lipschitz. Taking now supremum for any $x\in D$, then $p$ powers for $p>2$ and finally expectation, we get
\begin{align*}
E\Big{(} \| u_{n,k+1}(\cdot,t)-u_{n,k}(\cdot,t) &\|_{L^{\infty}(D)}^{p} \Big{)} \leq \\
&\leq c(n,p)\, E \Bigg{(} \bigg{\|} \int_{0}^{t} \int_{D} |G_{yy}(\cdot,y,t-s)|\, |u_{n,k}(y,s) -u_{n,k-1}(s,y)|\, dyds \bigg{\|}_{L^{\infty}(D)}^{p} \Bigg{)}.
\end{align*}
In the proof of Lemma \ref{continuity of u} we explained that the continuity of $u_{0}$ implies the space-time continuity of $u_{n,0}$. Since $f_{n}$ is Lipschitz, the iteration scheme and similar reasoning as in Lemma \ref{continuity of u} imply the almost sure space-time continuity of $u_{n,k}$ for any $k\in\mN_{*}$. In particular, for any $k\in\mN_{*}$, it holds almost surely
$$u_{n,k}-u_{n,k-1}\in L^{1}\big{(}[0,T];L^{\infty}(D)\big{)}.$$
Thus, we can invoke inequality (1.12) of \cite{WEB1} for $v(y,s):=|u_{n,k}(y,s)-u_{n,k-1}(y,s)|$ to obtain
\begin{equation*}
E \Big{(} \| u_{n,k+1}(\cdot,t)-u_{n,k}(\cdot,t) \|_{L^{\infty}(D)}^{p} \Big{)}
\leq c(n,p)\, E \bigg{(} \bigg{(} \int_{0}^{t} (t-s)^{-\frac{1}{2}} \| u_{n,k}(\cdot,s)-u_{n,k-1}(\cdot,s) \|_{L^{\infty}(D)}\, ds \bigg{)}^{p} \bigg{)}.
\end{equation*}
Applying H\um{o}lder's inequality, we get
\begin{align*}
E\Big{(} \| u_{n,k+1}(\cdot,t)-u_{n,k}(\cdot,t) &\|_{L^{\infty}(D)}^{p} \Big{)}
\leq \\
&\leq c(n,p)\, E \bigg{(} \bigg{(} \int_{0}^{t} (t-s)^{-\frac{p}{2(p-1)}} ds \bigg{)}^{p-1} \int_{0}^{t} \| u_{n,k}(\cdot,s)-u_{n,k-1}(\cdot,s) \|_{L^{\infty}(D)}^{p}\, ds \bigg{)}.
\end{align*}
The first integral is finite only if $p>2$ which is in accordance with our assumption for $p$. Thus,
\begingroup
\allowdisplaybreaks
\begin{align} \label{recursive}
\begin{split}
E\Big{(} \| u_{n,k+1}(\cdot,t)-u_{n,k}(\cdot,t) \|_{L^{\infty}(D)}^{p} \Big{)}
&\leq c(n,p)\, E \bigg{(} t^{\frac{p-2}{2}} \int_{0}^{t} \|u_{n,k}(\cdot,s)-u_{n,k-1}(\cdot,s) \|_{L^{\infty}(D)}^{p}\, ds \bigg{)} \\
&\leq c(n,p,T) \int_{0}^{t} E \Big{(} \| u_{n,k}(\cdot,s)-u_{n,k-1}(\cdot,s) \|_{L^{\infty}(D)}^{p} \Big{)} ds.
\end{split}
\end{align}
\endgroup

We now establish the following auxiliary estimate, which will be needed below. In particular, for any $p>2$, there exists a constant $\cc_{1}=\cc_{1}(\sigma,f,H,n,p,T,u_{0})>0$ such that
\begin{equation} \label{sup bound of dif. of first 2 terms}
\sup\limits_{t\in[0,T]}E \Big{(} \| u_{n,1}(\cdot,t)-u_{n,0}(\cdot,t) \|_{L^{\infty}(D)}^{p} \Big{)}\leq \cc_{1}.
\end{equation}
Indeed, by (\ref{Picard scheme}) we have
$$u_{n,1}(x,t) - u_{n,0}(x,t) = \int_{0}^{t}\int_{D} G_{yy}(x,y,t-s) f_{n}\big{(}u_{n,0}(y,s)\big{)}\, dyds + \sigma\int_{0}^{t}\int_{D} G(x,y,t-s) W_{H}(dy,ds)$$
from which we obtain
\begin{align*} 
\sup\limits_{t\in[0,T]} E \Big{(} \| u_{n,1}(\cdot,t)-u_{n,0}(\cdot,t) \|_{L^{\infty}(D)}^{p} \Big{)} &\leq c(p) \sup\limits_{t\in[0,T]} \bigg{\|} \int_{0}^{t} \int_{D} |G_{yy}(\cdot,y,t-s)|\, \big{|} f_{n}\big{(}u_{n,0}(y,s) \big{)} \big{|}\, dyds \bigg{\|}_{L^{\infty}(D)}^{p} \\
&\phantom{=}+ c(\sigma,p) \sup\limits_{t\in[0,T]} E \Bigg{(} \bigg{\|} \int_{0}^{t}\int_{D} G(\cdot,y,t-s) W_{H}(dy,ds) \bigg{\|}_{L^{\infty}(D)}^{p} \Bigg{)}.
\end{align*}
Regarding the first term, since $f_{n}$ is Lipschitz and $u_{n,0}$ is space-time continuous, the composition $f_{n}\circ u_{n,0}$ is also space-time continuous. Thus $f_{n}\circ u_{n,0}\in C([0,T];L^{\infty}(D))$ so we can invoke once again inequality (1.12) of \cite{WEB1} for $v(y,s):=|f_{n}\circ u_{n,0}(y,s)|$ to obtain
\begin{align*}
\sup\limits_{t\in[0,T]} \bigg{\|} \int_{0}^{t}\int_{D} |G_{yy}(\cdot,y,t-s)|\, \big{|} f_{n}\big{(}u_{n,0}(y,s) \big{)} \big{|}\, &dyds \bigg{\|}_{L^{\infty}(D)}^{p} \leq \\
&\leq c \sup\limits_{t\in[0,T]} \bigg{(} \int_{0}^{t} (t-s)^{-\frac{1}{2}} \left \| f_{n}\big{(}u_{n,0}(\cdot,s) \big{)} \right \|_{L^{\infty}(D)} ds \bigg{)}^{p} \\
\overset{\text{H\um{o}lder}}&{\leq} c(p,T) \sup\limits_{t\in[0,T]} \int_{0}^{t} \big{\|} f_{n}\big{(}u_{n,0}(\cdot,s) \big{)} \big{\|}_{L^{\infty}(D)}^{p}\, ds \\
&\leq c(p,T) \left \| f_{n}\circ u_{n,0} \right \|_{C([0,T];L^{\infty}(D))}^{p}.
\end{align*}
Regarding the second term, by (\ref{first estimate}) (for $\zeta = 0$), we deduce the existence of a constant $c(H,p,T)>0$ such that
$$\sup\limits_{t\in[0,T]} E \Bigg{(} \bigg{\|} \int_{0}^{t}\int_{D} G(\cdot,y,t-s) W_{H}(dy,ds) \bigg{\|}_{L^{\infty}(D)}^{p} \Bigg{)} \leq c(H,p,T).$$
Gathering the above, we get the desired result
\begin{equation*}
\sup\limits_{t\in[0,T]} E \Big{(} \| u_{n,1}(\cdot,t)-u_{n,0}(\cdot,t) \|_{L^{\infty}(D)}^{p} \Big{)} \leq c(\sigma,H,p,T)\, \| f_{n}\circ u_{n,0} \|_{C([0,T];L^{\infty}(D))}^{p}=:\cc_{1}.
\end{equation*}
Now then, by repeatedly applying inequality (\ref{recursive}) for the integrand on the right-hand side, we get
\begingroup
\allowdisplaybreaks
\begin{align*}
E\Big{(} \| u_{n,k+1}&(\cdot,t)-u_{n,k}(\cdot,t) \|_{L^{\infty}(D)}^{p} \Big{)}
\leq \\[0.1cm]
&\leq c(n,p,T)^{k} \int_{0}^{t} \int_{0}^{s_{k}} \int_{0}^{s_{k-1}} \dots \int_{0}^{s_{2}} E \Big{(} \| u_{n,1}(\cdot,s_{1})-u_{n,0}(\cdot,s_{1}) \|_{L^{\infty}(D)}^{p} \Big{)} ds_{1}\dots ds_{k-2}ds_{k-1}ds_{k} \\[0.1cm]
&\leq c(n,p,T)^{k} \int_{0}^{t} \int_{0}^{s_{k}} \int_{0}^{s_{k-1}} \dots \int_{0}^{s_{2}} \sup_{t\in[0,T]}E \Big{(} \| u_{n,1}(\cdot,t)-u_{n,0}(\cdot,t) \|_{L^{\infty}(D)}^{p} \Big{)} ds_{1}\dots ds_{k-2}ds_{k-1}ds_{k} \\[0.1cm]
&= \sup_{t\in[0,T]}E \Big{(} \| u_{n,1}(\cdot,t)-u_{n,0}(\cdot,t) \|_{L^{\infty}(D)}^{p} \Big{)}\, c(n,p,T)^{k}\, \frac{t^{k}}{k!} 
\end{align*}
\endgroup
and thus, taking supremum for $t\in[0,T]$ and using (\ref{sup bound of dif. of first 2 terms}), we obtain
\begin{equation} \label{sup bound of dif. of sequential terms}
\sup_{t \in [0,T]} E\Big{(} \| u_{n,k+1}(\cdot,t)-u_{n,k}(\cdot,t) \|_{L^{\infty}(D)}^{p} \Big{)} \leq \cc_{1}\, \frac{c(n,p,T)^{k}}{k!}.
\end{equation}
An immediate consequence of (\ref{sup bound of dif. of sequential terms}) is that
\begin{equation} \label{A.W. 1}
\sum\limits_{k=0}^{+\infty} \sup_{t \in [0,T]} E\Big{(} \| u_{n,k+1}(\cdot,t)-u_{n,k}(\cdot,t) \|_{L^{\infty}(D)}^{p} \Big{)} \leq \cc_{1} \sum\limits_{k=0}^{+\infty} \frac{c(n,p,T)^{k}}{k!} = \cc_{1}\, e^{c(n,p,T)}=:\cc_{2}
\end{equation}
which implies
$$\sup_{t \in [0,T]} E\Big{(} \| u_{n,k+1}(\cdot,t)-u_{n,k}(\cdot,t) \|_{L^{\infty}(D)}^{p} \Big{)}  \to0\quad \text{as}\ k\to+\infty$$
and thus
\begin{align*}
\| u_{n,k+1}-u_{n,k} \|_{L^{\infty}([0,T];L^{p}(\Omega;L^{\infty}(D)))} :&= \sup_{t \in [0,T]} \bigg{[} \left ( E\Big{(} \| u_{n,k+1}(\cdot,t)-u_{n,k}(\cdot,t) \|_{L^{\infty}(D)}^{p} \Big{)} \right )^{\frac{1}{p}} \bigg{]} \\
&\leq \bigg{(} \sup_{t \in [0,T]} E\Big{(} \| u_{n,k+1}(\cdot,t)-u_{n,k}(\cdot,t) \|_{L^{\infty}(D)}^{p} \Big{)} \bigg{)}^{\frac{1}{p}} \to0\quad \text{as}\ k\to+\infty.
\end{align*}
For any $(x,t)\in D\times[0,T]$, we have
\begin{align*}
\| u_{n,k+1}(x,t)-u_{n,k}(x,t) \|_{L^{p}(\Omega)} &\leq \| u_{n,k+1}(\cdot,t)-u_{n,k}(\cdot,t) \|_{L^{p}(\Omega;L^{\infty}(D))} \\
&\leq \| u_{n,k+1}-u_{n,k} \|_{L^{\infty}([0,T];L^{p}(\Omega;L^{\infty}(D)))}\to0\quad \text{as}\ k\to+\infty.
\end{align*}
This implies that the sequence $\{u_{n,k}\}_{k\in\mN}$ is Cauchy in $L^{p}(\Omega), L^{p}(\Omega;L^{\infty}(D))$ and $L^{\infty}\big{(}[0,T];L^{p}(\Omega;L^{\infty}(D))\big{)}$.\\[0.1cm] Since all of them are Banach spaces, $\{u_{n,k}\}_{k\in\mN}$ converges to some unique $u_{n}\in L^{p}(\Omega),\ \Tilde{u}_{n}\in L^{p}(\Omega;L^{\infty}(D))$ \\[0.05cm]
and $\hat{u}_{n}\in L^{\infty}\big{(}[0,T];L^{p}(\Omega;L^{\infty}(D))\big{)}$ in their respective spaces. It holds however
$$L^{\infty}\big{(}[0,T];L^{p}(\Omega;L^{\infty}(D))\big{)} \subset L^{p}(\Omega;L^{\infty}(D)) \subset L^{p}(\Omega)$$
thus $\Tilde{u}_{n},\hat{u}_{n}\in L^{p}(\Omega)$ and so, due to the uniqueness of the limit, we have $u_{n}=\Tilde{u}_{n}=\hat{u}_{n}$ almost surely. 
Through the scheme (\ref{Picard scheme}), by a standard argument, where we take limits in the $L^{p}(\Omega)$ norm, and use the
fact that $f_{n}$ is uniformly continuous (since it is Lipschitz), we conclude that $u_{n}$ satisfies (\ref{mild formulation of u_n}) almost surely.

Regarding now the uniqueness of the solution, if we suppose that there exists another solution $w_{n}$ of (\ref{mild formulation of u_n}), then by subtracting their respective
equations, taking absolute value on both sides and using the Lipschitz property of $f_{n}$, we get
$$\big{|} u_{n}(x,t) - w_{n}(x,t) \big{|} \leq c(n) \int_{0}^{t}\int_{D} |G_{yy}(x,y,t-s)|\, |u_{n}(y,s)-w_{n}(s,y)|\, dyds.$$
Taking now supremum for any $x\in D$, then $p$ powers for $p>2$ and finally expectation, we get
$$E \Big{(} \| u_{n}(\cdot,t) - w_{n}(\cdot,t) \|_{L^{\infty}(D)}^{p} \Big{)} \leq c(n,p) E \Bigg{(} \bigg{\|} \int_{0}^{t}\int_{D} |G_{yy}(\cdot,y,t-s)|\, |u_{n}(y,s)-w_{n}(s,y)|\, dyds \bigg{\|}_{L^{\infty}(D)}^{p} \Bigg{)}.$$
Following a similar procedure to the one for deriving (\ref{recursive}), we obtain
$$E \Big{(} \| u_{n}(\cdot,t) - w_{n}(\cdot,t) \|_{L^{\infty}(D)}^{p} \Big{)} \leq c(n,p,T) \int_{0}^{t} E \Big{(} \| u_{n}(\cdot,s)-w_{n}(\cdot,s) \|_{L^{\infty}(D)}^{p} \Big{)} ds.$$
Hence, by applying Gr\um{o}nwall’s lemma to the previous inequality, we get
\begin{align*}E \Big{(} \| u_{n}(\cdot,t) - w_{n}(\cdot,t) \|_{L^{\infty}(D)}^{p} \Big{)} \leq 0,\ \forall t\in[0,T] &\implies \| u_{n}(\cdot,t)-w_{n}(\cdot,t) \|_{L^{p}(\Omega;L^{\infty}(D))} = 0,\ \forall t \in [0,T]
\end{align*}
and thus, due to the relation between the norms of the spaces $L^{p}(\Omega), L^{p}(\Omega;L^{\infty}(D))$, we obtain
$$\| u_{n}(x,t)-w_{n}(x,t) \|_{L^{p}(\Omega)}=0,\ \forall (x,t)\in D\times[0,T].$$
This yields $u_{n}(x,t)=w_{n}(x,t)$ almost surely on $\Omega$ and $\Omega_{n}$ (since $\Omega_{n}\subset\Omega$) for any $(x,t)\in D\times[0,T]$, i.e.,
$$\mP \big{(} \big{\{} \omega\in\Omega\ (\text{or}\ \Omega_{n})\, :\, u_{n}(x,t;\omega)=w_{n}(x,t;\omega) \big{\}} \big{)}=1,\ \forall (x,t)\in D\times[0,T],$$
and so the processes $u_{n},w_{n}$ are modifications of each other on $\Omega$ and $\Omega_{n}$. In addition, $u_{n},w_{n}$ satisfy equation (\ref{mild formulation}) almost surely on $\Omega_{n}$ (since $f_{n}(u_{n})=f(u_{n})$ and $f_{n}(w_{n})=f(w_{n})$ on $\Omega_{n}$) and by Lemma \ref{continuity of u} we know that the solution of (\ref{mild formulation}) is almost surely continuous in space-time. So, $u_{n},w_{n}$ are almost surely continuous on $\Omega_{n}$ and thus they are indistinguishable on $\Omega_{n}$, i.e., 
$$\mP \big{(} \big{\{} \omega\in\Omega_{n}\, :\, u_{n}(x,t;\omega)=w_{n}(x,t;\omega) ,\ \forall (x,t)\in D\times[0,T] \big{\}} \big{)}=1.$$
This proves the uniqueness of solution of equation (\ref{mild formulation of u_n}) with uniquely defined paths almost surely on $\Omega_{n}$.

Lastly, the convergence $u_{n,k}\to u_{n}$ in $L^{\infty}\big{(}[0,T];L^{p}(\Omega;L^{\infty}(D))\big{)}$ for $p>2$, established previously, implies the existence of a constant $c = c(n,p,T)>0$ such that
\begin{equation} \label{bound of dif of u_nk, u_n}
\| u_{n,k}-u_{n} \|_{L^{\infty}([0,T];L^{p}(\Omega;L^{\infty}(D)))} \leq c,\ \forall k\in\mN,
\end{equation}
and therefore also
\begin{equation} \label{bound of dif. of u_nk(x,t)-u_n(x,t)}
\| u_{n,k}(\cdot,t)-u_{n}(\cdot,t) \|_{L^{p}(\Omega;L^{\infty}(D))}
\leq c,\ \forall t\in [0,T],\, \forall k\in\mN.
\end{equation}
Thus, for $p>2$ and any $t \in [0,T]$, we have
\begin{align} \label{bound when p>2}
E \Big{(} \| u_{n}(\cdot,t) \|_{L^{\infty}(D)}^{p} \Big{)} &\leq  E \Big{(} \| u_{n,k}(\cdot,t) - u_{n}(\cdot,t) \|_{L^{\infty}(D)}^{p} \Big{)} + \sum\limits_{i=k}^{+\infty} E \Big{(} \| u_{n,i+1}(\cdot,t) - u_{n,i}(\cdot,t) \|_{L^{\infty}(D)}^{p} \Big{)} \notag\\
&\leq E \Big{(} \| u_{n,k}(\cdot,t) - u_{n}(\cdot,t) \|_{L^{\infty}(D)}^{p} \Big{)} + \sum\limits_{i=0}^{+\infty} \sup_{t\in[0,T]} E \Big{(} \| u_{n,i+1}(\cdot,t) - u_{n,i}(\cdot,t) \|_{L^{\infty}(D)}^{p} \Big{)} 
\\[-0.1cm]
\overset{\substack{(\ref{A.W. 1})\\(\ref{bound of dif. of u_nk(x,t)-u_n(x,t)})}}&{\leq} c+\cc_{2}=:\cc_{3}, \notag
\end{align}
while for $p=2$, using H\um{o}lder's inequality for the expectation, we get
$$E \Big{(} \| u_{n}(\cdot,t) \|_{L^{\infty}(D)}^{2} \Big{)} \leq \left ( E \Big{(} \| u_{n}(\cdot,t) \|_{L^{\infty}(D)}^{4} \Big{)} \right )^{\frac{1}{2}} \overset{(\ref{bound when p>2})}{\leq} \Big{(} \cc_{3}\big{|}_{p=4} \Big{)}^{\frac{1}{2}}=:\cc_{4}.$$
Hence, for any $p\geq 2$, we have
$$E \Big{(} \| u_{n}(\cdot,t) \|_{L^{\infty}(D)}^{p} \Big{)} \leq \cc:=
\begin{cases}
    \cc_{3}, & \text{if}\ p>2,\\[0.1cm] \cc_{4}, & \text{if}\ p=2,
\end{cases}\ \forall t \in [0,T] \implies \sup_{t\in[0,T]} E \Big{(} \| u_{n}(\cdot,t) \|_{L^{\infty}(D)}^{p} \Big{)} \leq \cc$$
which proves (\ref{sup estimate of u_n}) and finally concludes the proof of the lemma.
\end{proof}
\begin{remark}
An important consequence of the bound presented in \textup{Lemma \ref{main lemma 1}} is the following estimate:
\begin{equation} \label{important bound 1}
E \bigg{(} \int_{0}^{T} \int_{D} | u_{n}(x,t) |^{2}\, dxdt \bigg{)} = \int_{0}^{T} E \bigg{(} \int_{D} | u_{n}(x,t) |^{2}\, dx \bigg{)} dt \leq c \sup\limits_{t\in[0,T]} E \Big{(} \| u_{n}(\cdot,t) \|_{L^{\infty}(D)}^{2} \Big{)} \overset{(\ref{sup estimate of u_n})}{\leq} \cc.
\end{equation}
In addition, for all $t \in [0,T]$, we have
\begin{align*}
E \Big{(} \| u_{n,k}(\cdot,t) \|_{L^{\infty}(D)}^{2} \Big{)} &\leq c\, E \Big{(} \| u_{n,k}(\cdot,t) - u_{n}(\cdot,t) \|_{L^{\infty}(D)}^{2} \Big{)} + c\, E \Big{(} \| u_{n}(\cdot,t) \|_{L^{\infty}(D)}^{2} \Big{)} \\ 
\overset{\textup{H\um{o}lder}}&{\leq} c \left [ \left ( E \Big{(} \| u_{n,k}(\cdot,t) - u_{n}(\cdot,t) \|_{L^{\infty}(D)}^{4} \Big{)} \right )^{\frac{1}{4}} \right ]^{2} + c\, E \Big{(} \| u_{n}(\cdot,t) \|_{L^{\infty}(D)}^{2} \Big{)}
\end{align*}
which, in combination with \textup{(\ref{bound of dif. of u_nk(x,t)-u_n(x,t)})} for $p=4$ and \textup{(\ref{sup estimate of u_n})}, gives
\begin{equation} \label{R. A1}
\sup\limits_{t\in[0,T]} E \Big{(} \| u_{n,k}(\cdot,t) \|_{L^{\infty}(D)}^{2} \Big{)} \leq \cc,\ \forall k\in\mN \implies \sup_{k\in\mN}\sup\limits_{t\in[0,T]} E \Big{(} \| u_{n,k}(\cdot,t) \|_{L^{\infty}(D)}^{2} \Big{)} \leq \cc.
\end{equation}
Moreover, since $f_{n}$ is Lipschitz in $\mR$, there exists a constant $c=c(n)>0$ such that 
\begin{equation} \label{R. A2}
|f_{n}(x)|\leq c(1+|x|),\ \forall x\in\mR \implies |f_{n}(x)|^{2} \leq \Tilde{c} (1+x^{2}),\ \forall x\in\mR.
\end{equation}
The above inequalities lead to another important estimate:
\begin{align} \label{R. A3}
\begin{split}
E \bigg{(} \int_{0}^{T} \int_{D} \big{|} f_{n} \big{(} u_{n,k}(x,t) \big{)} \big{|}^{2}\, dxdt \bigg{)} \overset{(\ref{R. A2})}&{\leq} c_{1} + c_{2}\, E \bigg{(} \int_{0}^{T} \int_{D} | u_{n,k}(x,t) |^{2}\, dxdt \bigg{)} \\
&\leq c_{1} + c_{2}\, \sup_{k\in\mN}\sup\limits_{t\in[0,T]} E \left ( \| u_{n,k}(\cdot,t) \|_{L^{\infty}(D)}^{2} \right ) \overset{(\ref{R. A1})}{\leq} \cc.
\end{split}
\end{align}
\end{remark}

We recall that this subsection's aim was to define a sequence of sets $\{\Omega_{n}\}_{n\in\mN_{*}}\subset\cf$ such that $\Omega_{n}\uparrow\Omega$ a.s. and construct a sequence of processes $\{u_{n}\}_{n\in\mN_{*}}$ such that $u_{n}(x,t)=u(x,t)$ a.s. on $\Omega_{n}$ for every $(x,t)\in D\times[0,T]$. The only thing left to do is to prove the last assertion. To do so, we start by subtracting (\ref{mild formulation of u_n}) and (\ref{mild formulation}) to get, for any $\omega \in \Omega_n$
\begin{align*}
u_{n}(x,t) - u(x,t) &= \int_{0}^{t} \int_{D} G_{yy}(x,y,t-s) \left ( f_n \big{(} u_{n}(y,s) \big{)} - f \big{(} u(y,s) \big{)} \right ) dyds
\end{align*}
Using now the fact that $f(u) = f_{n}(u)$ on $\Omega_n$, we can follow a procedure similar to the one in the previous lemma to obtain for $p>2$
$$E \Big{(} \mathbf{1}_{\Omega_n} \| u_{n}(\cdot,t) - u(\cdot,t) \|_{L^{\infty}(D)}^{p} \Big{)} \leq c(n,p,T) \int_{0}^{t} E \Big{(} \mathbf{1}_{\Omega_n} \| u_{n}(\cdot,s)-u(\cdot,s) \|_{L^{\infty}(D)}^{p} \Big{)}\, ds.$$
Thus, by Gr\um{o}nwall’s lemma, we get
$$\| u_{n}(\cdot,t) - u(\cdot,t) \|_{L^p(\Omega_n ; L^{\infty}(D))} = 0,\, \forall t \in [0,T] \implies \| u_{n}(x,t) - u(x,t) \|_{L^p(\Omega_n)} = 0,\, \forall (x,t) \in D \times [0,T].$$
This yields that $u_n(x,t) = u(x,t)$ almost surely on $\Omega_{n}$ for any $(x,t) \in D \times [0,T]$ which, combined with their almost sure space-time continuity, implies their indistinguishability on $\Omega_{n}$.

In conclusion, according to Definition \ref{Def. of local versions}, the sequence $\{(\Omega_{n},u_{n}(x,t))\}_{n \in \mN_{*}}$ will be a suitable candidate for localizing $u(x,t)$ in $\mD^{1,2}$ if in addition $\{u_{n}(x,t)\}_{n\in\mN_{*}}\subset\mD^{1,2}$ for every $(x,t)\in D\times[0,T]$. This last condition will be the focus of the next subsection.
\subsection{Existence of a localizing sequence $\{(\Omega_{n},u_{n}(x,t))\}_{n \in \mN_{*}}$ (Part 2)} \label{Subsection 3.4}
$ $\newline \indent
In this subsection, we will establish the final condition needed for localizing $u(x,t)$, i.e., we will prove that $\{u_{n}(x,t)\}_{n\in\mN_{*}}\subset\mD^{1,2}$ for every $(x,t) \in D\times[0,T]$. In addition, we will show that the Malliavin derivative of $u_{n}(x,t)$ is well-defined as the solution of an s.p.d.e., and that $\{u_{n}\}_{n\in\mN_{*}}\subset\mL^{1,2}$.

Before we proceed, we should make clear the framework of the Malliavin calculus (established in Subsection \ref{Subsection 2.1}) that we are working with. Our case corresponds to the Hilbert space $\mathscr{H} = L^{2}(D \times [0,T])$ and the isonormal Gaussian process $\{\hat{W}(h)\}_{h\in L^{2}(D\times[0,T])}$ given by
\begin{equation} \label{i.G.p. that we work with}
\hat{W}(h):=\int_{0}^{T} \int_{D} h(y,s) W(dy,ds)
\end{equation}
where $W$ is the space-time white noise defined by (\ref{space-time w.n.}). As mentioned in Remark \ref{R.R. 11}, given a random variable in $\mD^{1,2}$, its Malliavin derivative is an element of $L^{2}(\Omega\times D\times[0,T])$.
\\[0.2cm]
\indent We are now in position to prove this subsection's main lemma.
\begin{lemma} \label{main lemma 2}
Let $n\in\mN_{*}$ be fixed and consider the unique solution $u_{n}$ of equation $(\ref{mild formulation of u_n})$. Then: \\[0.05cm]
$1)$ For any $(x,t)\in D\times[0,T]$, it holds $u_{n}(x,t)\in\mD^{1,2}$ with the Malliavin derivative of $u_{n}(x,t)$ satisfying\\[0.05cm] \phantom{1)} almost surely and uniquely the stochastic partial differential equation
\begin{equation} \label{mal. der. of u_n}
D_{y,s}u_{n}(x,t) = \int_{s}^{t} \int_{D} G_{zz}(x,z,t-\tau) \cg_{n}(z,\tau) D_{y,s}u_{n}(z,\tau)\, dzd\tau + \sigma \big{[} K^{*}_{H} \big{(} G(x,\diamond,t-\bullet) \textup{\textbf{1}}_{[0,t]}(\bullet) \big{)} \big{]} (y,s)
\end{equation}
\phantom{1)} for all $(y,s)\in D\times[0,t]$ while $D_{y,s}u_{n}(x,t)=0$ if $(y,s)\in D\times(t,T]$. Here $\cg_{n}(x,t)$ is a stochastic process\\ \phantom{1)} such that
\begin{equation*}
\left | \cg_{n}(x,t) \right | \leq c(n)\ \text{a.s.},\ \forall (x,t)\in D\times[0,T].
\end{equation*}
$2)$ It holds that $\{u_{n}\}_{n\in\mN_{*}}\subset\mL^{1,2}$.
\end{lemma}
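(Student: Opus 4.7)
The plan is to prove part 1) by a two-step argument: first, by induction along the Picard scheme (\ref{Picard scheme}) I show that each iterate $u_{n,k}(x,t)$ lies in $\mD^{1,2}$ and derive a recursive SPDE for its Malliavin derivative; second, I establish uniform-in-$k$ bounds on $E\|Du_{n,k}(x,t)\|_{L^{2}(D\times[0,T])}^{2}$ and invoke the closedness criterion for $D$ (cf.\ \cite{N}, Lemma 1.2.3) together with the $L^{p}(\Omega)$-convergence $u_{n,k}(x,t)\to u_{n}(x,t)$ from Lemma \ref{main lemma 1} to pass to the limit.

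For the base case, $u_{n,0}(x,t)$ is deterministic so $u_{n,0}(x,t)\in\mD^{1,2}$ with $Du_{n,0}(x,t)=0$. For the inductive step, since $f_{n}\in C^{1}(\mR)$ with $\sup_{x}|f_{n}'(x)|\leq c(n)$, the chain rule (\cite{N}, Proposition 1.2.4) yields
\begin{equation*}
D_{y,s}f_{n}\big{(}u_{n,k}(z,\tau)\big{)} = \cg_{n,k}(z,\tau)\, D_{y,s}u_{n,k}(z,\tau), \quad \cg_{n,k}(z,\tau):=f_{n}'\big{(}u_{n,k}(z,\tau)\big{)},
\end{equation*}
with $|\cg_{n,k}|\leq c(n)$ almost surely. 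For the stochastic convolution in (\ref{mild formulation of u_n}), I rewrite it via (\ref{formula for the stoch. int.}) as the value of the isonormal Gaussian process (\ref{i.G.p. that we work with}) at $K_{H}^{*}\big{(}G(x,\diamond,t-\bullet)\textup{\textbf{1}}_{[0,t]}(\bullet)\big{)}$; applying (\ref{mal. der. of elements of S}) then gives that its Malliavin derivative equals $\sigma\big{[}K_{H}^{*}\big{(}G(x,\diamond,t-\bullet)\textup{\textbf{1}}_{[0,t]}(\bullet)\big{)}\big{]}(y,s)$, which vanishes for $s>t$ by the support of the indicator together with the form (\ref{formula 1 for K_{H}^{*}}) of $K_{H}^{*}$. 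Commuting $D_{y,s}$ with the deterministic convolution against $G_{zz}$ then yields (\ref{mal. der. of u_n}) with $u_{n,k+1}$ on the left and $u_{n,k}$ on the right.

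To obtain uniform bounds, set $\Phi_{k}(x,t):=E\|Du_{n,k}(x,t)\|_{L^{2}(D\times[0,T])}^{2}$. Squaring the recursion, applying Cauchy--Schwarz with respect to the measure $|G_{zz}(x,z,t-\tau)|dzd\tau$ on the integral term (of finite total mass $\int_{0}^{t}\int_{D}|G_{zz}(x,z,t-\tau)|dzd\tau\leq c\sqrt{T}$ by (\ref{Green's function's estimates})(ii) and (\ref{specific calculation})), integrating in $(y,s)\in D\times[0,T]$, and taking expectation, I obtain
\begin{equation*}
\sup_{x\in D}\Phi_{k+1}(x,t) \leq c(n,T)\int_{0}^{t}(t-\tau)^{-\frac{1}{2}} \sup_{z\in D}\Phi_{k}(z,\tau)\, d\tau + c(\sigma,H)\, t^{\frac{4H-1}{2}},
\end{equation*}
where the forcing term uses (\ref{second estimate}) of Lemma \ref{Estimates for stoch. int.} with $\zeta=0$ and a Minkowski step to exchange $\sup_{x}$ with $\int\int dyds$. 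Iteration followed by the fractional Gronwall lemma (Henry's inequality) then gives $\sup_{k,x,t}\Phi_{k}(x,t)\leq C(n,H,\sigma,T)$. Lemma 1.2.3 of \cite{N}, combined with the $L^{2}(\Omega)$-convergence $u_{n,k}(x,t)\to u_{n}(x,t)$ of Lemma \ref{main lemma 1}, then gives $u_{n}(x,t)\in\mD^{1,2}$ and weak convergence $Du_{n,k}(x,t)\to Du_{n}(x,t)$ in $L^{2}(\Omega;L^{2}(D\times[0,T]))$; passing to the (weak) limit in the recursion produces (\ref{mal. der. of u_n}). Uniqueness of the solution of (\ref{mal. der. of u_n}) follows from a Gronwall argument of the same type applied to the difference of two solutions. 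Part 2) is then immediate by integrating the uniform bound $\sup_{x,t}\Phi(x,t)<\infty$ over $(x,t)\in D\times[0,T]$ and combining with (\ref{important bound 1}).

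The main obstacle is deriving the Volterra inequality for $\Phi_{k+1}$: the Malliavin derivative $D_{y,s}u_{n,k}(z,\tau)$ must be controlled in the $L^{2}(\Omega\times D\times[0,T])$-norm in $(y,s)$ while simultaneously being integrated against the singular kernel $G_{zz}$ in $(z,\tau)$. Resolving this requires precisely the $L^{\infty}_{x}L^{1}_{z,\tau}$-integrability of $G_{zz}$ (from (\ref{Green's function's estimates})(ii) and (\ref{specific calculation})) for the nonlinear term and the sharp exponent $(4H-1)/2$ of estimate (\ref{second estimate}) for the forcing. These two ingredients of Lemma \ref{Estimates for stoch. int.} fit together exactly so as to permit a Gronwall closure, confirming that the estimates established there are what is needed to execute this scheme in the fractional setting.
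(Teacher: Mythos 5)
Your overall architecture coincides with the paper's: induction along the Picard scheme, the chain rule of \cite{N} for $f_{n}$, the representation of the stochastic convolution as the isonormal process evaluated at a deterministic element of $L^{2}(D\times[0,T])$ so that its Malliavin derivative is that element itself, a uniform-in-$k$ Volterra/Gr\"{o}nwall bound on the second moment of the Malliavin derivative, closability of $D$ via Lemma 1.2.3 of \cite{N} to pass to the limit, a Gr\"{o}nwall argument for uniqueness, and part 2) by integrating the uniform bound. The cosmetic differences are immaterial: your quantity $\sup_{x}E(\cdot)$ versus the paper's $E(\|\cdot\|_{L^{\infty}(D)})$ inside the expectation (either closes via Minkowski's integral inequality against $|G_{zz}|\,dzd\tau$), and your appeal to Henry's inequality with kernel $(t-\tau)^{-1/2}$ versus the paper's splitting $(t-\tau)^{-1/2}=(t-\tau)^{-3/8}(t-\tau)^{-1/8}$ before Cauchy--Schwarz to reach the kernel $(t-\tau)^{-1/4}$.

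The one place where you genuinely diverge, and where your write-up has a gap, is the identification of the limiting equation: you assert that ``passing to the (weak) limit in the recursion produces (\ref{mal. der. of u_n}).'' The convergence supplied by Lemma 1.2.3 of \cite{N} is $D u_{n,k}(z,\tau)\rightharpoonup Du_{n}(z,\tau)$ in $L^{2}(\Omega;L^{2}(D\times[0,T]))$ \emph{for each fixed} $(z,\tau)$, whereas the nonlinear term integrates the product $\cg_{n,k}(z,\tau)\,D_{y,s}u_{n,k}(z,\tau)$ against the singular kernel $G_{zz}$ over $(z,\tau)$. To pass to the limit you must (i) handle the product of a weakly convergent sequence with $\cg_{n,k}=f_{n}'(u_{n,k})$, which converges only in probability / in $L^{p}(\Omega)$, and also justify that its limit is the process $\cg_{n}$ produced by the chain rule applied to $u_{n}$; and (ii) interchange the pointwise-in-$(z,\tau)$ weak limits with $\int\!\int G_{zz}\,dzd\tau$, which requires the uniform bound together with dominated convergence. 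This can be carried out (test against a fixed $\xi$, use strong-times-weak convergence and the uniform bound $C$), but it is not automatic and you do not address it. The paper sidesteps the issue entirely: having established $u_{n}(x,t)\in\mD^{1,2}$ and the a priori bound on $E\int\!\int|Du_{n}|^{2}$, it applies the Malliavin derivative \emph{directly to the fixed-point equation (\ref{mild formulation of u_n}) satisfied by $u_{n}$}, invoking the chain rule on $f_{n}(u_{n})$ and commuting $D$ with the deterministic convolution, so that (\ref{mal. der. of u_n}) drops out with no limiting argument in the recursion (the vanishing of $D_{y,s}u_{n}(x,t)$ for $s>t$ being obtained from adaptedness via Corollary 1.2.1 of \cite{N}). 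You should either adopt that route or supply the weak-limit argument in full.
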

\begin{proof}
Consider the sequence $\{u_{n,k}\}_{k\in\mN}$ defined in the proof of Lemma \ref{main lemma 1}. We will use induction and the Picard iteration scheme to prove that 
$$\{u_{n,k}(x,t)\}_{k\in\mN}\subseteq\mD^{1,2},\ \forall (x,t)\in D\times[0,T].$$

The term $u_{n,0}(x,t)$ is a deterministic function, thus $u_{n,0}(x,t)\in\mD^{1,2}$ for every $(x,t)\in D\times[0,T]$ with 
\begin{equation} \label{mal. der. of u_n0}
D_{y,s} u_{n,0}(x,t) = 0,\ \forall (x,t),(y,s) \in D\times[0,T].
\end{equation}

Fix now an arbitrary $k\in\mN_{*}$. Our induction hypothesis will consist of two parts. Firstly
$$u_{n,1}(x,t),\dotsc,u_{n,k}(x,t)\in\mD^{1,2},\ \forall (x,t)\in D\times[0,T]$$
and secondly, we assume that there exists a constant $C=C(\sigma,H,n,T)>0$ (to be specified later) such that
\begin{equation} \label{induction hyp. 2}
\sup_{t\in[0,T]}\sup_{i\in\mN\cap[0,k]} E \bigg{(} \int_{0}^{T} \int_{D} \left \| D_{y,s} u_{n,i}(\cdot,t) \right \|_{L^{\infty}(D)}^{2} dyds \bigg{)}  \leq C.
\end{equation} 
We note that the integral over $[0,T]$ coincides with the integral over $[0,t]$, since the Malliavin derivative involved is zero for any $s>t$.
In order to complete the induction we have to prove that
\begin{equation*}
\begin{cases}
u_{n,k+1}(x,t)\in\mD^{1,2},\ \forall (x,t)\in D\times[0,T], \\[0.1cm]
\displaystyle\sup_{t\in[0,T]}\sup_{i\in\mN\cap[0,k+1]} E \bigg{(} \int_{0}^{T} \int_{D} \left \| D_{y,s} u_{n,i}(\cdot,t) \right \|_{L^{\infty}(D)}^{2} dyds \bigg{)}  \leq C.
\end{cases}
\end{equation*}
To prove the first requirement, we will use the expression (\ref{Picard scheme}) of $u_{n,k+1}$ and prove that each term on the right-hand side belongs in $\mD^{1,2}$ for any $(x,t)\in D\times[0,T]$. Since we have already explained that $u_{n,0}(x,t)\in\mD^{1,2}$ for any $(x,t)\in D\times[0,T]$, all that is left is to show that the second and third terms meet our needs. 

Now then, since $f_{n}$ is Lipschitz, with Lipschitz constant $c(n)>0$, and $u_{n,k}(x,t)\in\mD^{1,2}$ for every $(x,t) \in D\times[0,T]$ (by our induction hypothesis), Proposition 1.2.4 of \cite{N} asserts that $f_{n} \big{(} u_{n,k}(x,t) \big{)} \in \mD^{1,2}$ for every $(x,t) \in D\times[0,T]$ and  there exists a stochastic process $\cg_{n,k}(x,t)$ such that
\begin{equation} \label{mal. der. of composition}
D_{y,s} \left ( f_{n} \big{(} u_{n,k}(x,t) \big{)} \right ) = \cg_{n,k}(x,t) D_{y,s}u_{n,k}(x,t)\ \text{a.s.},\ \forall (x,t),(y,s)\in D\times[0,T]
\end{equation}
\begingroup
\allowdisplaybreaks
with
\begin{equation} \label{bound of st.p. G}
\left | \cg_{n,k}(x,t) \right | \leq c(n)\ \text{a.s.},\ \forall (x,t)\in D\times[0,T]\implies \sup_{k\in\mN}\sup_{(x,t)\in D\times[0,T]} \left | \cg_{n,k}(x,t) \right | \leq c(n)\ \text{a.s.}\ .
\end{equation}
In addition, we have $f_{n}(u_{n,k})\in\mL^{1,2}$ since
\begin{align*}
\|f_{n}(u_{n,k})\|^{2}_{\mL^{1,2}} &= \left \| f_{n} \big{(} u_{n,k}(x,t) \big{)} \right \|_{L^{2}(\Omega\times D\times[0,T])}^{2} + \left \| D_{y,s} \left ( f_{n} \big{(} u_{n,k}(x,t) \big{)} \right ) \right \|^{2}_{L^{2}(\Omega\times (D\times[0,T])^{2})} \\
\overset{(\ref{mal. der. of composition})}&{=} \left \| f_{n} \big{(} u_{n,k}(x,t) \big{)} \right \|_{L^{2}(\Omega\times D\times[0,T])}^{2} + \left \| G_{n,k}(x,t) D_{y,s}u_{n,k}(x,t) \right \|^{2}_{L^{2}(\Omega\times (D\times[0,T])^{2})} \\
&= \left \| f_{n} \big{(} u_{n,k}(x,t) \big{)} \right \|_{L^{2}(\Omega\times D\times[0,T])}^{2} + E \bigg{(} \int_{0}^{T} \int_{D} \int_{0}^{T} \int_{D} |G_{n,k}(x,t)|^{2}\, |D_{y,s}u_{n,k}(x,t)|^{2}\, dydsdxdt \bigg{)} \\
\overset{(\ref{bound of st.p. G})}&{\leq} \left \| f_{n} \big{(} u_{n,k}(x,t) \big{)} \right \|_{L^{2}(\Omega\times D\times[0,T])}^{2} + c(n)\ E \bigg{(} \int_{0}^{T} \int_{D} \int_{0}^{T} \int_{D} \left | D_{y,s}u_{n,k}(x,t) \right |^{2} dydsdxdt \bigg{)} \\
\overset{\text{Fubini}}&{=} \left \| f_{n} \big{(} u_{n,k}(x,t) \big{)} \right \|_{L^{2}(\Omega\times D\times[0,T])}^{2} + c(n) \int_{0}^{T} \int_{D} E \bigg{(} \int_{0}^{T} \int_{D} \left | D_{y,s}u_{n,k}(x,t) \right |^{2} dyds \bigg{)} dxdt \\
&\shs{-0.6}\leq \left \| f_{n} \big{(} u_{n,k}(x,t) \big{)} \right \|_{L^{2}(\Omega\times D\times[0,T])}^{2} + c(n) \int_{0}^{T} \int_{D}\ \sup_{t\in[0,T]} E \bigg{(} \int_{0}^{T} \int_{D} \left \| D_{y,s} u_{n,k}(\cdot,t) \right \|_{L^{\infty}(D)}^{2} dyds \bigg{)} dxdt \\
\overset{\substack{(\ref{R. A3})\\(\ref{induction hyp. 2})}}&{\leq} \cc + C\ (<+\infty).
\end{align*}
\endgroup
Lastly, using (\ref{Green's function's estimates}) $iii)$ we can see that $G_{\diamond\diamond}(x,\diamond,t-\bullet)\textup{\textbf{1}}_{[0,t]}(\bullet)\in L^{2}(D\times[0,T])$ for any $(x,t)\in D\times[0,T]$. So, by Proposition 3.4.3 of \cite{NE}, we obtain 
$$\big{\langle} f_{n} \big{(} u_{n,k}(\diamond,\bullet) \big{)}, G_{\diamond\diamond}(x,\diamond,t-\bullet) \textup{\textbf{1}}_{[0,t]}(\bullet) \big{\rangle}_{L^{2}(D\times[0,T])} \in \mD^{1,2},\ \forall (x,t)\in D\times[0,T]$$
and
\begin{align} \label{mal. der. of int. 1}
\begin{split}
D_{y,s} \bigg{(} \int_{0}^{t} \int_{D} G_{zz}(x,z,t-\tau) f_{n}\big{(}u_{n,k}(z,\tau)\big{)}\, dzd\tau \bigg{)} &= \int_{0}^{t} \int_{D} G_{zz}(x,z,t-\tau) D_{y,s} \left ( f_{n} \big{(} u_{n,k}(z,\tau) \big{)} \right ) dzd\tau \\
\overset{(\ref{mal. der. of composition})}&{=} \int_{s}^{t} \int_{D} G_{zz}(x,z,t-\tau) \cg_{n,k}(z,\tau) D_{y,s}u_{n,k}(z,\tau)\, dzd\tau
\end{split}
\end{align}
almost surely, for every $(x,t),(y,s)\in D\times[0,T]$.

Regarding now the last term of the right-hand side of (\ref{Picard scheme}), by (\ref{formula for the stoch. int.}) we know that
\begin{equation*}
\sigma \int_{0}^{t}\int_{D} G(x,z,t-\tau) W_{H}(dz,d\tau) = \sigma \int_{0}^{T} \int_{D} \big{[} K^{*}_{H} \big{(} G(x,\diamond,t-\bullet) \textup{\textbf{1}}_{[0,t]}(\bullet) \big{)} \big{]} (z,\tau) W(dz,d\tau).
\end{equation*}
The last integral is of the form (\ref{elements of S}) for $n=1,\ f(x):=x,\ h(z,\tau) := \big{[} K^{*}_{H} \big{(} G(x,\diamond,t-\bullet) \textup{\textbf{1}}_{[0,t]}(\bullet) \big{)} \big{]} (z,\tau)$ and \\[0.1cm]
the isonormal Gaussian process $\{\hat{W}(h)\}_{L^{2}(D\times[0,T])}$ given by (\ref{i.G.p. that we work with}), thus it belongs to $\cs\subseteq\mD^{1,2}$ with 
\begin{equation} \label{mal. der. of int. 2}
D_{y,s} \bigg{(} \int_{0}^{T} \int_{D} \big{[} K^{*}_{H} \big{(} G(x,\diamond,t-\bullet) \textup{\textbf{1}}_{[0,t]}(\bullet) \big{)} \big{]} (z,\tau) W(dz,d\tau) \bigg{)} = \big{[} K^{*}_{H} \big{(} G(x,\diamond,t-\bullet) \textup{\textbf{1}}_{[0,t]}(\bullet) \big{)} \big{]} (y,s)
\end{equation}
almost surely, for every $(x,t),(y,s)\in D\times[0,T]$.

Now that we have shown that every term on the right-hand side belongs to $\mD^{1,2}$ for any $(x,t)\in D\times[0,T]$, the linearity of the space $\mD^{1,2}$ yields the desired result.

To prove the second requirement, we apply the Malliavin derivative to both sides of (\ref{Picard scheme}) and use the linearity of the operator to get
\begin{align*}
D_{y,s}u_{n,k+1}(x,t) = D_{y,s}u_{n,0}(x,t) &+ D_{y,s} \bigg{(} \int_{0}^{t} \int_{D} G_{zz}(x,z,t-\tau) f_{n}\big{(}u_{n,k}(z,\tau)\big{)} \, dzd\tau \bigg{)} \\
&+ \sigma D_{y,s} \bigg{(} \int_{0}^{t}\int_{D} G(x,z,t-\tau) W_{H}(dz,d\tau) \bigg{)}.
\end{align*}
Using relations (\ref{formula of fr. stoch. int.}), (\ref{mal. der. of u_n0}), (\ref{mal. der. of int. 1}) and (\ref{mal. der. of int. 2}), we obtain
\begin{align} \label{mal. der. of recursive}
\begin{split}
D_{y,s}u_{n,k+1}(x,t) = \int_{s}^{t} \int_{D} G_{zz}(x,z,t-\tau) \cg_{n,k}(z,\tau) D_{y,s}u_{n,k}(z,&\tau)\, dzd\tau\ + \\[0.1cm]
&+\sigma \big{[} K^{*}_{H} \big{(} G(x,\diamond,t-\bullet) \textup{\textbf{1}}_{[0,t]}(\bullet) \big{)} \big{]} (y,s)
\end{split}
\end{align}
almost surely, for every $(x,t),(y,s)\in D\times[0,T]$.\\
We take now absolute value and use triangle inequality to get
\begin{align*}
|D_{y,s}u_{n,k+1}(x,t)| &\leq \int_{s}^{t} \int_{D} \left | G_{zz}(x,z,t-\tau) \right | \left | \cg_{n,k}(z,\tau) \right | \left | D_{y,s}u_{n,k}(z,\tau) \right | dzd\tau\ + \\
&\shs{6}+|\sigma| \left | \big{[} K^{*}_{H} \big{(} G(x,\diamond,t-\bullet) \textup{\textbf{1}}_{[0,t]}(\bullet) \big{)} \big{]} (y,s) \right | \\
\overset{(\ref{bound of st.p. G})}&{\leq} c(n) \int_{s}^{t} \int_{D} \left | G_{zz}(x,z,t-\tau) \right | \left | D_{y,s}u_{n,k}(z,\tau) \right | dzd\tau\ + \\
&\shs{6}+|\sigma| \left | \big{[} K^{*}_{H} \big{(} G(x,\diamond,t-\bullet) \textup{\textbf{1}}_{[0,t]}(\bullet) \big{)} \big{]} (y,s) \right |.
\end{align*}
We then take supremum for any $x\in D$, raise to the second power, integrate for $(y,s)\in D\times[0,t]$ and finally take expectation to obtain
\begin{align} \label{R.R.1}
\begin{split}
E \bigg{(} \int_{0}^{t} \int_{D} \| D_{y,s}&u_{n,k+1}(\cdot,t) \|^{2}_{L^{\infty}(D)}\, dyds \bigg{)} \leq \\
&\leq c(n)\, E \Bigg{(} \int_{0}^{t} \int_{D} \bigg{\|} \int_{s}^{t} \int_{D} \left | G_{zz}(\cdot,z,t-\tau) \right | \left | D_{y,s}u_{n,k}(z,\tau) \right | dzd\tau \bigg{\|}^{2}_{L^{\infty}(D)} dyds \Bigg{)} \\
&\phantom{\leq.}+ c(\sigma) \int_{0}^{t} \int_{D} \big{\|} \big{[} K^{*}_{H} \big{(} G(\cdot,\diamond,t-\bullet) \textup{\textbf{1}}_{[0,t]}(\bullet) \big{)} \big{]} (y,s) \big{\|}^{2}_{L^{\infty}(D)}\, dyds.
\end{split}
\end{align}

Regarding the first term on the right-hand side of (\ref{R.R.1}), we can invoke relations (\ref{Green's function's estimates}) $iii)$, (\ref{specific calculation}) (or, alternatively, inequality (1.12) of \cite{WEB1} for $v(z,\tau):=|D_{y,s}u_{n,k}(z,\tau)|$) to get
\begingroup
\allowdisplaybreaks
\begin{align} \label{R.R.2}
E \bigg{(} \int_{0}^{t} \int_{D} \bigg{\|} \int_{s}^{t} &\int_{D} \left | G_{zz}(\cdot,z,t-\tau) \right | \left | D_{y,s}u_{n,k}(z,\tau) \right | dzd\tau \bigg{\|}^{2}_{L^{\infty}(D)} dyds \bigg{)} \leq \notag\\
&\leq c\, E \bigg{(} \int_{0}^{t} \int_{D} \bigg{(} \int_{s}^{t} (t-\tau)^{-\frac{1}{2}} \left \| D_{y,s}u_{n,k}(\cdot,\tau) \right \|_{L^{\infty}(D)} d\tau \bigg{)}^{2}  dyds\bigg{)} \notag\\
&= c\, E \bigg{(} \int_{0}^{t} \int_{D} \bigg{(} \int_{s}^{t} (t-\tau)^{-\frac{3}{8}}(t-\tau)^{-\frac{1}{8}} \left \| D_{y,s}u_{n,k}(\cdot,\tau) \right \|_{L^{\infty}(D)} d\tau \bigg{)}^{2}  dyds\bigg{)} \notag\\
\overset{\text{H\um{o}lder}}&{\leq} c\, E \bigg{(} \int_{0}^{t} \int_{D} \bigg{(} \int_{s}^{t} (t-\tau)^{-\frac{3}{4}} d\tau \bigg{)} \bigg{(} \int_{s}^{t} (t-\tau)^{-\frac{1}{4}} \left \| D_{y,s}u_{n,k}(\cdot,\tau) \right \|_{L^{\infty}(D)}^{2} d\tau \bigg{)}  dyds\bigg{)} \\
&\leq c(T)\, E \bigg{(} \int_{0}^{t} \int_{D} \int_{s}^{t} (t-\tau)^{-\frac{1}{4}} \left \| D_{y,s}u_{n,k}(\cdot,\tau) \right \|_{L^{\infty}(D)}^{2} d\tau  dyds\bigg{)} \notag\\
&= c(T) \int_{0}^{t} (t-\tau)^{-\frac{1}{4}} E \bigg{(} \int_{0}^{t} \int_{D} \left \| D_{y,s}u_{n,k}(\cdot,\tau) \right \|^{2}_{L^{\infty}(D)} dyds \bigg{)} d\tau \notag\\
&= c(T) \int_{0}^{t} (t-\tau)^{-\frac{1}{4}} E \bigg{(} \int_{0}^{\tau} \int_{D} \left \| D_{y,s}u_{n,k}(\cdot,\tau) \right \|^{2}_{L^{\infty}(D)} dyds \bigg{)} d\tau \notag,
\end{align}
\endgroup
where, for the second-to-last equality, we used Fubini's theorem (which is permitted by (\ref{induction hyp. 2})) and, for the last equality, we used the fact that the Malliavin derivative is zero when $s>\tau$. 
 
Regarding the second term on the right-hand side of (\ref{R.R.1}), by (\ref{second estimate}) (for $\zeta=0$), we know that there exists a constant $c(H)>0$ such that
$$\int_{0}^{t} \int_{D} \big{\|} \big{[} K^{*}_{H} \big{(} G(\cdot,\diamond,t-\bullet) \textup{\textbf{1}}_{[0,t]}(\bullet) \big{)} \big{]} (y,s) \big{\|}^{2}_{L^{\infty}(D)}\, dyds \leq c(H)\, t^{\frac{4H-1}{2}},\ \forall t\in[0,T].$$

By (\ref{R.R.1}), (\ref{R.R.2}) and the above inequality, we obtain for any $t\in[0,T]$
\begin{align} \label{R.R.3}
\begin{split}
E \bigg{(} \int_{0}^{t} \int_{D} \| D_{y,s}&u_{n,k+1}(\cdot,t) \|^{2}_{L^{\infty}(D)}\, dyds \bigg{)} \leq \\
&\leq c(n,T) \int_{0}^{t} (t-\tau)^{-\frac{1}{4}} E \bigg{(} \int_{0}^{\tau} \int_{D} \left \| D_{y,s}u_{n,k}(\cdot,\tau) \right \|^{2}_{L^{\infty}(D)} dyds \bigg{)} d\tau + c(\sigma,H)\, t^{\frac{4H-1}{2}}.
\end{split}
\end{align}
Taking now supremum for $i\in\mN\cap[0,k]$ yields
\begin{align*}
\begin{split}
\sup_{i\in\mN\cap[0,k]} E \bigg{(} &\int_{0}^{t} \int_{D} \left \| D_{y,s}u_{n,i+1}(\cdot,t) \right \|^{2}_{L^{\infty}(D)} dyds \bigg{)} \leq \\
&\leq c(n,T) \int_{0}^{t} (t-\tau)^{-\frac{1}{4}} \sup_{i\in\mN\cap[0,k]} E \bigg{(} \int_{0}^{\tau} \int_{D} \left \| D_{y,s}u_{n,i}(\cdot,\tau) \right \|^{2}_{L^{\infty}(D)} dyds \bigg{)} d\tau + c(\sigma,H)\, t^{\frac{4H-1}{2}} \\
&\leq c(n,T) \int_{0}^{t} (t-\tau)^{-\frac{1}{4}} \sup_{i\in\mN\cap[0,k]} E \bigg{(} \int_{0}^{\tau} \int_{D} \left \| D_{y,s}u_{n,i+1}(\cdot,\tau) \right \|^{2}_{L^{\infty}(D)} dyds \bigg{)} d\tau + c(\sigma,H)\, t^{\frac{4H-1}{2}}.
\end{split}
\end{align*}
If we define 
$$A_{n,k+1}(t) := \sup_{i\in\mN\cap[0,k+1]} E \bigg{(} \int_{0}^{t} \int_{D} \left \| D_{y,s}u_{n,i}(\cdot,t) \right \|^{2}_{L^{\infty}(D)} dyds \bigg{)},\ t\in[0,T]$$
then the above relation (along with (\ref{mal. der. of u_n0})) gives
\begin{equation} \label{Gronwall for mal. der.}
A_{n,k+1}(t) \leq c(n,T) \int_{0}^{t} (t-\tau)^{-\frac{1}{4}} A_{n,k+1}(\tau)\, d\tau + c(\sigma,H)\, t^{\frac{4H-1}{2}},\ \forall t\in[0,T].
\end{equation}
Thus, by the singular Gr\um{o}nwall inequality (cf. \cite {He}, Lemma 7.7.1), we get
\begin{align*}
A_{n,k+1}(t) &\leq c(\sigma,H)\, \Gamma \Big{(} \frac{4H+1}{2} \Big{)}\, t^{\frac{4H-1}{2}}\, \text{E}_{\frac{3}{4}, \frac{4H+1}{2}} \bigg{(} c(n,T)\, \Gamma \Big{(} \frac{3}{4} \Big{)}\, t^{\frac{3}{4}} \bigg{)},\ \forall t\in[0,T]
\end{align*}
where $\Gamma$ is the gamma function and E is the two-parameter Mittag-Leffler function $\text{E}_{a,b}(z) := \sum\limits_{n = 0}^{+\infty} \frac{z^n}{\Gamma(an + b)}$. \\
From the known exponential upper bound of the Mittag-Leffler functions (cf. \cite{GKMR}, p. 71), for any $\Tilde{c} > 1$, there exists $c > 0$ such that
$$\text{E}_{\frac{3}{4}, \frac{4H+1}{2}} \bigg{(} c(n,T)\, \Gamma \Big{(} \frac{3}{4} \Big{)}\, t^{\frac{3}{4}} \bigg{)} \leq c\, \exp \Bigg{[} \Tilde{c}\, \bigg{(} c(n,T)\, \Gamma \Big{(} \frac{3}{4} \Big{)}\, t^{\frac{3}{4}} \bigg{)}^{\frac{4}{3}} \Bigg{]} = c \exp \big{(} c(n,T)\, t \big{)}.$$
Gathering the above we obtain 
$$A_{n,k+1}(t) \leq c(\sigma,H)\, t^{\frac{4H-1}{2}}\, \exp \big{(} c(n,T)\, t \big{)},\ \forall t \in [0,T] \implies \sup_{t\in[0,T]} A_{n,k+1}(t) \leq c(\sigma,H)\, T^{\frac{4H-1}{2}} \exp \big{(} c(n,T)\, T \big{)}$$
which finally establishes the second requirement of our induction for
$$C := c(\sigma,H)\, T^{\frac{4H-1}{2}} \exp \big{(} c(n,T)\, T \big{)}.$$

Finally, as a consequence of relation (\ref{induction hyp. 2}), for every $(x,t)\in D\times[0,T]$, it holds
\begin{equation} \label{sup bound of mal. der. of u_nk}
\sup_{k\in\mN} \left \| D_{\diamond,\bullet}u_{n,k}(x,t) \right \|_{L^{2}(\Omega;L^{2}(D\times[0,T]))} = \sup_{k\in\mN} \left \| D_{\diamond,\bullet}u_{n,k}(x,t) \right \|_{L^{2}(\Omega\times D\times[0,T])}\leq C.
\end{equation}
Since, for any $(x,t)\in D\times[0,T]$, we have proven that $\{u_{n,k}(x,t)\}_{k\in\mN}\subseteq\mD^{1,2}$ and we already know (from Lemma \ref{main lemma 1}) that $u_{n,k}(x,t)\to u_{n}(x,t)$ in $L^{p}(\Omega)$ for $p>2$ (which implies convergence in $L^{2}(\Omega)$ as well), Lemma 1.2.3 of \cite{N} now asserts that $\{u_{n}(x,t)\}_{n\in\mN_{*}}\subseteq\mD^{1,2}$ for every $(x,t) \in D\times[0,T]$ and
\begin{equation} \label{weak conv. of mal. der.}
D_{\diamond,\bullet}u_{n,k}(x,t)\rightharpoonup D_{\diamond,\bullet}u_{n}(x,t)\quad \text{in}\ L^{2} \left ( \Omega; L^{2} (D\times[0,T]) \right )\quad \text{as}\ k\to+\infty.
\end{equation}

What is left to do for 1) is to identify which stochastic partial differential equation the Malliavin derivative of $u_{n}(x,t)$ satisfies, and prove that it does so uniquely. By Corollary 1.2.1 of \cite{N}, it follows immediately that $D_{y,s}u_{n}(x,t)=0$ almost surely if $(y,s)\in D\times(t,T]$, so the focus will be on proving (\ref{mal. der. of u_n}).

Since $f_{n}$ is Lipschitz (with Lipschitz constant $c(n)>0$) and $u_{n}(x,t)\in\mD^{1,2}$ for any $(x,t)\in D\times[0,T]$, Proposition 1.2.4 of \cite{N} asserts that $f_{n} \big{(} u_{n}(x,t) \big{)}\in\mD^{1,2}$ for any $(x,t)\in D\times[0,T]$ and  there exists a stochastic process $\cg_{n}(x,t)$ such that
\begin{equation} \label{mal. der. of composition 2}
D_{y,s} \big{(} f_{n} \big{(} u_{n}(x,t) \big{)} \big{)} = \cg_{n}(x,t) D_{y,s}u_{n}(x,t)\ \text{a.s.},\ \forall (x,t),(y,s)\in D\times[0,T]
\end{equation}
with
\begin{equation} \label{bound of st.p. G 2}
\left | \cg_{n}(x,t) \right | \leq c(n)\ \text{a.s.},\ \forall (x,t)\in D\times[0,T]\implies \sup_{(x,t)\in D\times[0,T]} \left | \cg_{n}(x,t) \right | \leq c(n)\ \text{a.s.}\ .
\end{equation}
Furthermore, by the weak convergence stated in (\ref{weak conv. of mal. der.}), we get
\begin{align*}
\begin{split}
E \bigg{(} \int_{0}^{T} \int_{D} \left | D_{y,s} u_{n}(x,t) \right |^{2} dyds \bigg{)} &= \left \| D_{\diamond,\bullet}u_{n}(x,t) \right \|^{2}_{L^{2}(\Omega;L^{2}(D\times[0,T]))} 
\\[0.1cm]
&\leq \liminf\limits_{k\to+\infty} \left ( \left \| D_{\diamond,\bullet}u_{n,k}(x,t) \right \|^{2}_{L^{2}(\Omega;L^{2}(D\times[0,T]))} \right ) \\[0.1cm]
&= \lim\limits_{k\to+\infty} \left ( \inf_{m\geq k} \left \| D_{\diamond,\bullet}u_{n,m}(x,t) \right \|^{2}_{L^{2}(\Omega;L^{2}(D\times[0,T]))} \right ) \\[0.1cm]
&\leq \lim\limits_{k\to+\infty} \left ( \sup_{m\geq k} \left \| D_{\diamond,\bullet}u_{n,m}(x,t) \right \|^{2}_{L^{2}(\Omega;L^{2}(D\times[0,T]))} \right ) \\[0.1cm]
&\leq \sup_{k\in\mN} \left \| D_{\diamond,\bullet}u_{n,k}(x,t) \right \|^{2}_{L^{2}(\Omega\times D\times[0,T])} \overset{(\ref{sup bound of mal. der. of u_nk})}{\leq} C
\end{split}
\end{align*}
for every $(x,t) \in D\times[0,T]$ and thus
\begin{equation} \label{R.R.4}
\sup_{(x,t)\in D\times[0,T]} E \bigg{(} \int_{0}^{T} \int_{D} \left | D_{y,s} u_{n}(x,t) \right |^{2} dyds \bigg{)} \leq C.
\end{equation}
Using now (\ref{R.R.4}) and similar argumentation to that used for deriving (\ref{mal. der. of int. 1}), we conclude that
$$\big{\langle} f_{n} \big{(} u_{n}(\diamond,\bullet) \big{)}, G_{\diamond\diamond}(x,\diamond,t-\bullet) \textup{\textbf{1}}_{[0,t]}(\bullet) \big{\rangle}_{L^{2}(D\times[0,T])} \in \mD^{1,2},\ \forall (x,t)\in D\times[0,T]$$
and
\begin{align} \label{R.R.5}
\begin{split}
D_{y,s} \bigg{(} \int_{0}^{t} \int_{D} G_{zz}(x,z,t-\tau) f_{n}\big{(}u_{n}(z,\tau)\big{)}\, dz&d\tau \bigg{)} = \int_{0}^{t} \int_{D} G_{zz}(x,z,t-\tau) D_{y,s} \big{(} f_{n} \big{(} u_{n}(z,\tau) \big{)} \big{)}\, dzd\tau \\
&\overset{(\ref{mal. der. of composition 2})}{=} \int_{s}^{t} \int_{D} G_{zz}(x,z,t-\tau) \cg_{n}(z,\tau) D_{y,s}u_{n}(z,\tau)\, dzd\tau
\end{split}
\end{align}
almost surely, for every $(x,t),(y,s)\in D\times[0,T]$.\\
Lastly, we have already explained that the last term on the right-hand side of (\ref{mild formulation of u_n}) belongs in $\mD^{1,2}$ and we have calculated its Malliavin derivative in (\ref{mal. der. of int. 2}). Thus, applying Malliavin derivative to both sides of (\ref{mild formulation of u_n}) and using (\ref{mal. der. of u_n0}), (\ref{mal. der. of int. 2}) and (\ref{R.R.5}), we finally obtain (\ref{mal. der. of u_n}).

Now that we have shown that the Malliavin derivative of $u_{n}$ satisfies (\ref{mal. der. of u_n}), to complete 1), we have to justify that it does so uniquely. To demonstrate this, all we have to do is prove the uniqueness of the solution for the following stochastic partial differential equation:
\begin{align} \label{equation of V}
\begin{split}
V(x,y,t,s\, ;\, n) = \int_{0}^{t} \int_{D} G_{zz}(x,z,t-\tau) \cg_{n}(z,\tau) V(z,y,\tau,s\, &;\, n)\, dzd\tau\ + \\
&+\sigma \big{[} K^{*}_{H} \big{(} G(x,\diamond,t-\bullet) \textup{\textbf{1}}_{[0,t]}(\bullet) \big{)} \big{]} (y,s).
\end{split}
\end{align}
a.s. for $(x,t) \in D \times [0,T]$ and $(y,s) \in D \times [0,t]$ while $V(x,y,t,s\, ;\, n) = 0$ when $(y,s) \in D \times (t,T]$, with
$$\sup_{(x,t)\in D\times[0,T]} E \bigg{(} \int_{0}^{T} \int_{D} \left | V(x,y,t,s\, ;\, n) \right |^{2} dyds \bigg{)} < +\infty.$$
Suppose there exist two solutions $V,U$ of (\ref{equation of V}).
By subtracting the equations they respectively satisfy and taking absolute value on both sides, we get
\begin{align*}
|V(x,y,t,s\, ;\, n) - U(x,y,t,s\, ;\, n)| &= \bigg{|} \int_{0}^{t}\int_{D} G_{zz}(x,z,t-\tau) \cg_{n}(z,\tau) (V(z,y,\tau,s\, ;\, n) - U(z,y,\tau,s\, ;\, n) )\, dzd\tau \bigg{|} \\
\overset{(\ref{bound of st.p. G 2})}&{\leq} c(n) \int_{0}^{t}\int_{D} \left | G_{zz}(x,z,t-\tau) \right | \left | V(z,y,\tau,s\, ;\, n) - U(z,y,\tau,s\, ;\, n) \right | dzd\tau.
\end{align*}
We then raise to the second power, integrate for $(y,s)\in D\times[0,t]$, take expectation and finally take supremum for any $x\in D$ to obtain
\begin{align} \label{R.R. 13}
\begin{split}
\sup_{x\in D} E &\bigg{(} \int_{0}^{t} \int_{D} \left |  V(x,y,t,s\, ;\, n) - U(x,y,t,s\, ;\, n) \right |^{2} dyds \bigg{)} \leq \\
&\leq c(n)\, \sup_{x \in D} E \bigg{(} \int_{0}^{t} \int_{D} \left ( \int_{0}^{t} \int_{D} \left | G_{zz}(x,z,t-\tau) \right | \left | V(z,y,\tau,s\, ;\, n) - U(z,y,\tau,s\, ;\, n) \right | dzd\tau \right )^{2} dyds \bigg{)} \\
&\leq c(n)\, \sup_{x \in D} \left ( \int_{0}^{t} \int_{D} |G_{zz}(x,z,t-\tau)| \left ( E \left ( \int_{0}^{t} \int_{D} \left | V(z,y,\tau,s\, ;\, n) - U(z,y,\tau,s\, ;\, n) \right |^{2} dyds \right ) \right )^{\frac{1}{2}} dzd\tau \right )^{2} \\
&\shs{-1}\leq c(n)\, \sup_{x \in D} \left (  \int_{0}^{t} \left [ \int_{D} |G_{zz}(x,z,t-\tau)|  dz \right ] \left ( \sup_{z \in D} E \left ( \int_{0}^{t} \int_{D} \left | V(z,y,\tau,s\, ;\, n) - U(z,y,\tau,s\, ;\, n) \right |^{2} dyds \right ) \right )^{\frac{1}{2}} d\tau \right )^{2} \\
&\leq c(n) \left ( \int_{0}^{t} (t-\tau)^{-\frac{1}{2}} \left ( \sup_{z \in D} E \left ( \int_{0}^{t} \int_{D} \left | V(z,y,\tau,s\, ;\, n) - U(z,y,\tau,s\, ;\, n) \right |^{2} dyds \right ) \right )^{\frac{1}{2}} d\tau \right )^{2} \\
&\leq c(n,T) \int_{0}^{t} (t-\tau)^{-\frac{1}{4}}  \sup_{z \in D} E \left ( \int_{0}^{\tau} \int_{D} \left | V(z,y,\tau,s\, ;\, n) - U(z,y,\tau,s\, ;\, n) \right |^{2} dyds \right )  d\tau
\end{split}
\end{align}
where for the second inequality we used Minkowski's integral inequality and for the remaining calculations we followed the same approach as in (\ref{R.R.2}).\\
If we define now
$$B_{n}(t) := \sup_{x \in D} E \bigg{(} \int_{0}^{t} \int_{D} |V(x,y,t,s\, ;\, n) - U(x,y,t,s\, ;\, n)|^{2}\, dyds \bigg{)},\ t\in[0,T],$$
then (\ref{R.R. 13}) is written equivalently as
$$B_{n}(t) \leq c(n,T) \int_{0}^{t} (t-\tau)^{-\frac{1}{4}} B_{n}(\tau)\, d\tau,\ \forall t\in[0,T].$$
Thus, by the singular Gr\um{o}nwall inequality and the non-negativity of $B_{n}$, we obtain $B_{n}(t)=0$ for all $t\in[0,T]$ which in turn yields the almost sure equality of $V,U$ for all $(x,t),(y,s)\in D\times[0,T]$. 

To complete the lemma, we need to prove 2), i.e., $\{u_{n}\}_{n\in\mN_{*}} \subset \mL^{1,2}$. We have
\begingroup
\allowdisplaybreaks
\begin{align*}
\|u_{n}\|^{2}_{\mL^{1,2}} &= \left \| u_{n}(x,t) \right \|_{L^{2}(\Omega\times D\times[0,T])}^{2} + \left \| D_{y,s}u_{n}(x,t) \right \|^{2}_{L^{2}(\Omega\times (D\times[0,T])^{2})} \\
&= \left \| u_{n}(x,t) \right \|_{L^{2}(\Omega\times D\times[0,T])}^{2} + E \bigg{(} \int_{0}^{T} \int_{D} \int_{0}^{T} \int_{D} \left | D_{y,s}u_{n}(x,t) \right |^{2} dydsdxdt \bigg{)} \\
\overset{\text{Fubini}}&{=} \left \| u_{n}(x,t) \right \|_{L^{2}(\Omega\times D\times[0,T])}^{2} + \int_{0}^{T} \int_{D} E \bigg{(} \int_{0}^{T} \int_{D} \left | D_{y,s}u_{n}(x,t) \right |^{2} dyds \bigg{)} dxdt \\
&\leq \left \| u_{n}(x,t) \right \|_{L^{2}(\Omega\times D\times[0,T])}^{2} + \int_{0}^{T} \int_{D}\ \sup_{(x,t)\in D\times[0,T]} E \bigg{(} \int_{0}^{T} \int_{D} \left | D_{y,s} u_{n}(x,t) \right |^{2} dyds \bigg{)} dxdt \overset{\substack{(\ref{important bound 1})\\(\ref{R.R.4})}}{\leq} \cc + C,
\end{align*}
\endgroup
which directly confirms our claim and concludes the proof of the lemma.
\end{proof}
\svs{0.1}
In Subsection \ref{Subsection 3.3} we defined a sequence of sets $\{\Omega_{n}\}_{n\in\mN_{*}}\subset\cf$ such that $\Omega_{n}\uparrow\Omega$ a.s. and constructed a sequence of processes $\{u_{n}\}_{n\in\mN_{*}}$ such that 
$$u_{n}(x,t)=u(x,t)\ \text{a.s. on}\ \Omega_{n},\ \forall (x,t)\in D\times[0,T],$$
where $u$ is the unique solution of (\ref{mild formulation}). In this subsection we proved that 
$$\{u_{n}(x,t)\}_{n\in\mN_{*}}\subset\mD^{1,2},\ \forall (x,t)\in D\times[0,T]$$
and showed that its Malliavin derivative is well defined as the solution of a stochastic partial differential equation. According now to Definition \ref{Def. of local versions}, it holds $$u(x,t)\in\mD^{1,2}_{\textup{\text{loc}}},\ \forall (x,t)\in D\times[0,T].$$
Moreover, by 2) of Remark \ref{important remark} we know that the Malliavin derivative of $u$ is defined without ambiguity by $$D_{y,s}u(x,t) = D_{y,s}u_{n}(x,t)\ \text{a.s. on}\ \Omega_{n},\ n\geq1,\ \forall (x,t),(y,s)\in D\times[0,T].$$

Lastly, we showed that 
$\{u_{n}\}_{n\in\mN_{*}}\subset\mL^{1,2}$. 
Note that this highlights improved regularity for $u_{n}$ since, by definition, the space 
$$\mL^{1,2} := \mD^{1,2} \big{(} L^{2}(D\times[0,T]) \big{)}$$
coincides with the class of processes $F\in L^{2}(\Omega\times D\times[0,T])$ such that $F(x,t)\in\mD^{1,2}$, for almost all $(x,t)\in D\times[0,T]$, with the added condition
$$\left \| D_{y,s}F(x,t) \right \|_{L^{2}(\Omega\times (D\times[0,T])^{2})}<+\infty.$$
Thus, once again, Definition \ref{Def. of local versions} now implies that $u\in\mL^{1,2}_{\textup{\text{loc}}}$. 
\section{Existence of a density for $u(x,t)$} \label{Section 4}
Consider again the unique solution $u$ of (\ref{mild formulation}). The aim of this section is to prove the existence of a density for the random variable $u(x,t)$, for any $(x,t)\in D\times(0,T]$, by proving the absolute continuity of its law with respect to the Lebesgue measure on $\mR$. 
\subsection{Absolute continuity of the law of $u_{n}(x,t)$ with respect to the Lebesgue measure on $\mR$} \label{Subsection 4.1}
$ $\newline\\[-0.3cm] \indent
Let $n\in\mN_{*}$ be fixed and consider the unique solution $D_{y,s}u_{n}(x,t)$ of (\ref{mal. der. of u_n}).
We will start by proving the following technical lemma:
\begin{lemma} \label{important estimate}
For any $\hat{s} \in (0,T]$ and any $\epsilon \in (0,\hat{s}]$, there exist positive constants $c_1 = c_1(\sigma,H)$ and $c_2 = c_2(n,T)$ such that
\begin{equation} \label{R.R.6} 
\sup_{t\in[\hat{s}-\epsilon,\hat{s}]} \sup_{x \in D} E \bigg{(} \int_{\hat{s}-\epsilon}^{\hat{s}}\int_{D} | D_{y,s}u_{n}(x,t) |^2\, dyds \bigg{)} \leq c_1\, \epsilon^{\frac{4H-1}{2}} \exp (c_2\, \epsilon).
\end{equation}
\end{lemma}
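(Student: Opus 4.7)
The plan is to follow the structure of the induction step used in Lemma \ref{main lemma 2} to derive the bound on $A_{n,k+1}$, but now integrating the variable $s$ only over the short window $[\hat{s}-\epsilon, \hat{s}]$ and carefully tracking how the width $\epsilon$ enters through the estimates of Lemma \ref{Estimates for stoch. int.}. The factor $\epsilon^{\frac{4H-1}{2}}$ will arise from invoking (\ref{second estimate}) with $\zeta := \hat{s}-\epsilon$, while the exponential factor will come from the generalized Gr\"onwall inequality applied to a Volterra-type inequality with the weakly singular kernel $(t-\tau)^{-\frac{1}{4}}$, exactly as in (\ref{Gronwall for mal. der.}).

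To begin, I would take the $L^\infty(D)$-norm in $x$ on both sides of (\ref{mal. der. of u_n}), use the almost sure bound $|\cg_n(z,\tau)| \leq c(n)$, and combine the estimate (\ref{Green's function's estimates}) $ii)$ with (\ref{specific calculation}) to get $\int_D |G_{zz}(x,z,t-\tau)|\, dz \leq c\, (t-\tau)^{-\frac{1}{2}}$ uniformly in $x$. This yields almost surely
\begin{equation*}
\|D_{y,s}u_n(\cdot,t)\|_{L^\infty(D)} \leq c(n) \int_s^t (t-\tau)^{-\frac{1}{2}} \|D_{y,s}u_n(\cdot,\tau)\|_{L^\infty(D)}\, d\tau + |\sigma|\, \big{\|}\big{[}K_H^*\big{(}G(\cdot,\diamond,t-\bullet)\textup{\textbf{1}}_{[0,t]}(\bullet)\big{)}\big{]}(y,s)\big{\|}_{L^\infty(D)}.
\end{equation*}
Squaring, splitting via $(a+b)^2 \leq 2a^2 + 2b^2$, and applying H\"older's inequality exactly as in the derivation of (\ref{R.R.2}) (splitting $(t-\tau)^{-\frac{1}{2}} = (t-\tau)^{-\frac{3}{8}}(t-\tau)^{-\frac{1}{8}}$) absorbs the singular kernel into the constant, leaving an integrand of the form $(t-\tau)^{-\frac{1}{4}}\|D_{y,s}u_n(\cdot,\tau)\|_{L^\infty(D)}^2$. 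I would then integrate in $(y,s)$ over $D \times [\hat{s}-\epsilon, \hat{s}]$ (recalling that $D_{y,s}u_n \equiv 0$ for $s>t$, so the integration is effectively restricted to $s \in [\hat{s}-\epsilon, t]$), take expectation, and apply Fubini to the resulting triple integral.

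The key observation for the stochastic-integral term is that for every $s \in [\hat{s}-\epsilon, \hat{s}]$ the outer $r$-integral in the definition (\ref{formula 1 for K_{H}^{*}}) of $K_H^*$ starts at $r = s \geq \hat{s}-\epsilon$, so one has the exact identity
\begin{equation*}
\big{[}K_H^*\big{(}G(x,\diamond,t-\bullet)\textup{\textbf{1}}_{[0,t]}(\bullet)\big{)}\big{]}(y,s) = \big{[}K_H^*\big{(}G(x,\diamond,t-\bullet)\textup{\textbf{1}}_{[\hat{s}-\epsilon,t]}(\bullet)\big{)}\big{]}(y,s).
\end{equation*}
This replacement allows me to extend the integration range in $s$ back to $[0,T]$ by positivity and invoke (\ref{second estimate}) with $\zeta := \hat{s}-\epsilon$ directly, producing a bound of the form $c(\sigma,H)\, \epsilon^{\frac{4H-1}{2}}$, since $t-(\hat{s}-\epsilon) \leq \epsilon$ for $t \leq \hat{s}$.

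Defining
\begin{equation*}
B_n(t) := E\bigg{(}\int_{\hat{s}-\epsilon}^{\hat{s}} \int_D \|D_{y,s}u_n(\cdot,t)\|_{L^\infty(D)}^2\, dyds\bigg{)},\quad t \in [\hat{s}-\epsilon,\hat{s}],
\end{equation*}
the preceding steps would produce the Volterra inequality
\begin{equation*}
B_n(t) \leq c(n,T)\int_{\hat{s}-\epsilon}^t (t-\tau)^{-\frac{1}{4}}\, B_n(\tau)\, d\tau + c(\sigma,H)\, \epsilon^{\frac{4H-1}{2}},
\end{equation*}
to which I would apply the generalized Gr\"onwall lemma (singular kernel version) exactly as done after (\ref{Gronwall for mal. der.}) to conclude that $B_n(t) \leq \cc\, \epsilon^{\frac{4H-1}{2}} \exp\bigl(\tfrac{4}{3}(t-\hat{s}+\epsilon)^{\frac{3}{4}}\bigr) \leq \cc\, \epsilon^{\frac{4H-1}{2}} \exp\bigl(\tfrac{4}{3}\epsilon^{\frac{3}{4}}\bigr)$. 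Since evidently $\sup_{x\in D} E\bigl(\int_{\hat{s}-\epsilon}^{\hat{s}}\int_D |D_{y,s}u_n(x,t)|^2 dyds\bigr) \leq B_n(t)$, taking $\sup_t$ yields (\ref{R.R.6}). The one technical point requiring care is the a priori finiteness of $B_n$ on $[\hat{s}-\epsilon,\hat{s}]$, needed for Gr\"onwall to apply; this follows by running the same argument on the approximants $u_{n,k}$, where (\ref{induction hyp. 2}) already provides uniform-in-$k$ finiteness, and passing to the limit via the weak convergence (\ref{weak conv. of mal. der.}) together with the lower semicontinuity of the norm.
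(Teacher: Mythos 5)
Your proposal follows essentially the same route as the paper's proof: the identity $\big[K_H^*\big(G(x,\diamond,t-\bullet)\textup{\textbf{1}}_{[0,t]}(\bullet)\big)\big](y,s)=\big[K_H^*\big(G(x,\diamond,t-\bullet)\textup{\textbf{1}}_{[\zeta,t]}(\bullet)\big)\big](y,s)$ for $s\geq\zeta$, the invocation of (\ref{second estimate}) with $\zeta=\hat s-\epsilon$, and the singular-kernel Gr\"onwall argument applied to a Volterra inequality with kernel $(t-\tau)^{-\frac{1}{4}}$ are exactly the steps the paper takes. The only (cosmetic) difference is that you run the iteration on $B_n(t)=E\big(\int\|D_{y,s}u_n(\cdot,t)\|_{L^\infty(D)}^2\,dyds\big)$ while the paper uses the smaller quantity $\sup_{x\in D}E\big(\int|D_{y,s}u_n(x,t)|^2\,dyds\big)$, whose a priori finiteness is immediate from (\ref{R.R.4}); note that your finiteness argument via (\ref{induction hyp. 2}) and weak convergence only controls the pointwise-in-$x$ quantity, so you should either work with $\sup_x E(\cdots)$ throughout, as the paper does, or run the Volterra estimate at the level of the approximants $u_{n,k}$ before passing to the limit --- either fix is routine and the statement only requires $\sup_x E(\cdots)$.
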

\begin{proof}
We start from equation (\ref{mal. der. of u_n}), i.e., 
$$D_{y,s}u_{n}(x,t) = \int_{s}^{t} \int_{D} G_{zz}(x,z,t-\tau) \cg_{n}(z,\tau) D_{y,s}u_{n}(z,\tau)\, dzd\tau\ + \sigma \big{[} K^{*}_{H} \big{(} G(x,\diamond,t-\bullet) \textup{\textbf{1}}_{[0,t]}(\bullet) \big{)} \big{]} (y,s)$$
which holds almost surely for any $(x,t)\in D\times[0,T]$ and any $(y,s)\in D\times[0,t]$. We take absolute value on both sides and use triangle inequality to get
\begin{align*}
\big{|} D_{y,s}u_{n}(x,t) \big{|} &\leq \int_{s}^{t} \int_{D} |G_{zz}(x,z,t-\tau)|\, |\cg_{n}(z,\tau)|\, |D_{y,s}u_{n}(z,\tau)|\, dzd\tau\ + \\
&\shs{6}+|\sigma| \left | \big{[} K^{*}_{H} \big{(} G(x,\diamond,t-\bullet) \textup{\textbf{1}}_{[0,t]}(\bullet) \big{)} \big{]} (y,s) \right | \\
\overset{(\ref{bound of st.p. G 2})}&{\leq} c(n) \int_{s}^{t} \int_{D} |G_{zz}(x,z,t-\tau)|\, |D_{y,s}u_{n}(z,\tau)|\, dzd\tau\ + \\
&\shs{6}+|\sigma| \left | \big{[} K^{*}_{H} \big{(} G(x,\diamond,t-\bullet) \textup{\textbf{1}}_{[0,t]}(\bullet) \big{)} \big{]} (y,s) \right |.
\end{align*}
We then raise to the second power, integrate for $(y,s)\in D\times[\zeta,t]$, where $0\leq\zeta\leq t$, take expectation and finally take supremum for any $x\in D$ to obtain
\begin{align} \label{R.R.7}
\begin{split}
\sup_{x \in D} E \bigg{(} \int_{\zeta}^{t} \int_{D} | D_{y,s}&u_{n}(x,t) |^{2}\, dyds \bigg{)} \leq \\
&\leq c(n)\, \sup_{x \in D} E \bigg{(} \int_{\zeta}^{t} \int_{D} \bigg{(} \int_{s}^{t} \int_{D} |G_{zz}(x,z,t-\tau)|\, |D_{y,s}u_{n}(z,\tau)|\, dzd\tau \bigg{)}^{2} dyds \bigg{)} \\
&\phantom{\leq.}+ c(\sigma) \sup_{x \in D} \int_{\zeta}^{t} \int_{D} \left | \big{[} K^{*}_{H} \big{(} G(x,\diamond,t-\bullet) \textup{\textbf{1}}_{[0,t]}(\bullet) \big{)} \big{]} (y,s) \right |^{2} dyds.
\end{split}
\end{align}
Regarding the first term on the right-hand side of (\ref{R.R.7}), following similar calculations as those done for (\ref{R.R. 13}), we obtain
\begin{align*}
\sup_{x \in D} E \bigg{(} \int_{\zeta}^{t} \int_{D} \bigg{(} \int_{s}^{t} \int_{D} |G_{zz}(x,z,t-\tau)|\, &|D_{y,s}u_{n}(z,\tau)|\, dzd\tau \bigg{)}^{2} dyds \bigg{)} \leq \\
&\leq c(T) \int_{\zeta}^{t} (t-\tau)^{-\frac{1}{4}} \sup_{z \in D} E \bigg{(} \int_{\zeta}^{\tau} \int_{D} | D_{y,s}u_{n}(z,\tau) |^{2}\, dyds \bigg{)} d\tau.
\end{align*}
Regarding now the second term, it is easy to verify that, for any $\zeta\leq s$, we have
\begin{align*}
\big{[} K^{*}_{H} \big{(} G(x,\diamond,t-\bullet) \textup{\textbf{1}}_{[0,t]}(\bullet) \big{)} \big{]} (y,s) = \big{[} K^{*}_{H} \big{(} G(x,\diamond,t-\bullet) \textup{\textbf{1}}_{[\zeta,t]}(\bullet) \big{)} \big{]} (y,s)
\end{align*}
and thus
\begingroup
\allowdisplaybreaks
\begin{align*}
\sup_{x \in D} \int_{\zeta}^{t} \int_{D} \big{ |} \big{[} K^{*}_{H} \big{(} G(x,\diamond,t-\bullet) \textup{\textbf{1}}_{[0,t]}(\bullet) \big{)} \big{]} (y,s)  \big{|}^{2}\, dyds &= \sup_{x \in D} \int_{\zeta}^{t} \int_{D} \left | \big{[} K^{*}_{H} \big{(} G(x,\diamond,t-\bullet) \textup{\textbf{1}}_{[\zeta,t]}(\bullet) \big{)} \big{]} (y,s) \right |^{2} dyds \\
&\shs{-0.1}\leq \int_{0}^{T} \int_{D} \big{\|} \big{[} K^{*}_{H} \big{(} G(\cdot,\diamond,t-\bullet) \textup{\textbf{1}}_{[\zeta,t]}(\bullet) \big{)} \big{]} (y,s) \big{\|}^{2}_{L^{\infty}(D)}\, dyds \\
\overset{(\ref{second estimate})}&{\leq} c(H)\, (t-\zeta)^{\frac{4H-1}{2}}.
\end{align*}
\endgroup
Gathering the above estimates, we end up with
\begin{align*}
\begin{split}
\sup_{x \in D} E \bigg{(} \int_{\zeta}^{t} \int_{D} &| D_{y,s}u_{n}(x,t) |^{2}\, dyds \bigg{)} \leq \\
&\leq c(n,T) \int_{\zeta}^{t} (t-\tau)^{-\frac{1}{4}} \sup_{z \in D} E \bigg{(} \int_{\zeta}^{\tau} \int_{D} | D_{y,s}u_{n}(z,\tau) |^{2}\, dyds \bigg{)} d\tau + c(\sigma,H)\, (t-\zeta)^{\frac{4H-1}{2}}.
\end{split}
\end{align*}
If we define 
$$L_{n}(t) := \sup_{x \in D} E \bigg{(} \int_{\zeta}^{t} \int_{D} | D_{y,s}u_{n}(x,t) |^{2}\, dyds \bigg{)},\ t\in[\zeta,T],$$
then the above relation becomes
\begin{equation*}
L_{n}(t) \leq c(n,T) \int_{\zeta}^{t} (t-\tau)^{-\frac{1}{4}} L_{n}(\tau)\, d\tau + c(\sigma,H)\, (t-\zeta)^{\frac{4H-1}{2}},\ \forall t\in[\zeta,T].
\end{equation*}
Thus, by an argument similar to that used for (\ref{Gronwall for mal. der.}), we obtain
\begin{align*}
L_{n}(t) &\leq c(\sigma,H)\, (t-\zeta)^{\frac{4H-1}{2}} \exp \big{[} c(n,T) (t - \zeta) \big{]} \\
\iff  \sup_{x \in D} E \bigg{(} \int_{\zeta}^{t} \int_{D} | D_{y,s}u_{n}(x,t) |^{2}\, dyds \bigg{)} &\leq c(\sigma,H)\, (t-\zeta)^{\frac{4H-1}{2}} \exp \big{[} c(n,T) (t - \zeta) \big{]},\ \forall t\in[\zeta,T].
\end{align*}
Consider now any $\hat{s} \in (0,T]$, $\zeta\in[0,\hat{s}]$. Then, for every $t \in [\zeta,\hat{s}]$, we have (since $D_{y,s}u_{n}(x,t)=0$ when $s>t$)
\begin{align*}
\sup_{x \in D} E \bigg{(} \int_{\zeta}^{\hat{s}} \int_{D} | D_{y,s}u_{n}(x,t) |^{2}\, dyds \bigg{)} &= \sup_{x \in D} E \bigg{(} \int_{\zeta}^{t} \int_{D} | D_{y,s}u_{n}(x,t) |^{2}\, dyds \bigg{)} 
\\[0.1cm]
&\leq c(\sigma,H)\, (t - \zeta)^{\frac{4H-1}{2}} \exp \big{[} c(n,T) (t - \zeta) \big{]}
\\[0.1cm]
&\leq c(\sigma,H)\, (\hat{s} - \zeta)^{\frac{4H-1}{2}} \exp \big{[} c(n,T) (\hat{s} - \zeta) \big{]}.
\end{align*}
Choosing now $\zeta = \hat{s} - \epsilon$ for any $\epsilon \in (0,\hat{s}]$, we obtain for every $t \in [\hat{s}- \epsilon,\hat{s}]$
$$\sup_{x \in D} E \bigg{(} \int_{\hat{s}-\epsilon}^{\hat{s}} \int_{D} | D_{y,s}u_{n}(x,t) |^{2}\, dyds \bigg{)} \leq c(\sigma,H)\, \epsilon^{\frac{4H-1}{2}} \exp \big{(} c(n,T)\, \epsilon \big{)}.$$
Taking finally supremum for $t\in[\hat{s}-\epsilon,\hat{s}]$ yields the desired result.
\end{proof}

We will now use the very useful estimate from the previous lemma to prove the following lemma:
\begin{lemma} \label{abs. con. of law of u_n}
For any $(x,t)\in D\times(0,T]$, the law of the random variable $u_{n}(x,t)$ is absolutely
continuous with respect to the Lebesgue measure on $\mR$.
\end{lemma}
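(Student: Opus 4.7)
The plan is to invoke the Bouleau--Hirsch absolute continuity criterion (Theorem 2.1.3 of \cite{N}): since $u_{n}(x,t)\in\mD^{1,2}$ by Lemma \ref{main lemma 2}, it suffices to show that
\begin{equation*}
\big{\|} D_{\diamond,\bullet}u_{n}(x,t) \big{\|}_{L^{2}(D\times[0,T])} > 0 \quad \text{a.s.}
\end{equation*}
(the degenerate case $\sigma=0$ being excluded, as then the solution is deterministic and no density exists). To isolate a non-vanishing contribution, I would split the right-hand side of (\ref{mal. der. of u_n}) into its deterministic and stochastic parts,
\begin{equation*}
D_{y,s}u_{n}(x,t) = \sigma \big{[} K^{*}_{H} \big{(} G(x,\diamond,t-\bullet) \textup{\textbf{1}}_{[0,t]}(\bullet) \big{)} \big{]} (y,s) + R_{n}(x,y,t,s),
\end{equation*}
where $R_{n}$ denotes the double integral term. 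For any $\epsilon\in(0,t]$, the inequality $|a+b|^{2}\geq\tfrac{1}{2}|a|^{2}-|b|^{2}$ together with $\|D_{\diamond,\bullet}u_{n}(x,t)\|_{L^{2}(D\times[0,T])}^{2}\geq \int_{t-\epsilon}^{t}\!\int_{D}|D_{y,s}u_{n}(x,t)|^{2}\,dyds$ yields
\begin{equation*}
\big{\|} D_{\diamond,\bullet}u_{n}(x,t) \big{\|}_{L^{2}(D\times[0,T])}^{2} \geq \tfrac{\sigma^{2}}{2}\! \int_{t-\epsilon}^{t}\!\int_{D}\!\big|\big[K^{*}_{H}(G(x,\diamond,t-\bullet)\textup{\textbf{1}}_{[0,t]})\big](y,s)\big|^{2}\,dyds - \int_{t-\epsilon}^{t}\!\int_{D}\!|R_{n}|^{2}\,dyds.
\end{equation*}

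The argument then rests on two complementary estimates. The first is the sharp deterministic lower bound $\int_{t-\epsilon}^{t}\!\int_{D}|[K^{*}_{H}(G(x,\diamond,t-\bullet)\textup{\textbf{1}}_{[0,t]})](y,s)|^{2}\,dyds\geq c_{1}\,\epsilon^{(4H-1)/2}$, uniform for $(x,t)$ in compact subsets of $D\times(0,T]$; this is the ``rather challenging'' estimate advertised in the abstract and must be produced from a direct pointwise analysis of $K_{H}^{*}G$ through (\ref{formula 1 for K_{H}^{*}}) and (\ref{par. der. of K_H}). The second is the upper bound
\begin{equation*}
E\!\left(\int_{t-\epsilon}^{t}\!\int_{D}|R_{n}(x,y,t,s)|^{2}\,dyds\right)\leq c_{2}\,\epsilon^{(4H-1)/2+3/4},
\end{equation*}
which is obtained by repeating the H\"older--Minkowski computation of (\ref{R.R.2}) with $|\cg_{n}|\leq c(n)$ from (\ref{bound of st.p. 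G 2}) and (\ref{Green's function's estimates})$\,ii)$ to extract the $(t-\tau)^{-1/4}$ kernel, then invoking Lemma \ref{important estimate} with $\hat{s}=\tau\in[t-\epsilon,t]$ and performing the remaining $\tau$-integral. Crucially, the remainder is of strictly higher order in $\epsilon$ than the principal deterministic term.

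With both estimates in hand, Markov's inequality gives
\begin{equation*}
\mP\!\left(\int_{t-\epsilon}^{t}\!\int_{D}|R_{n}|^{2}\,dyds\geq\tfrac{c_{1}\sigma^{2}}{4}\,\epsilon^{(4H-1)/2}\right)\leq\tfrac{4c_{2}}{c_{1}\sigma^{2}}\,\epsilon^{3/4}\to 0 \quad \text{as } \epsilon\to 0^{+},
\end{equation*}
and on the complementary event the display above forces $\|D_{\diamond,\bullet}u_{n}(x,t)\|_{L^{2}(D\times[0,T])}^{2}\geq\tfrac{c_{1}\sigma^{2}}{4}\epsilon^{(4H-1)/2}>0$. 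Sending $\epsilon\to0^{+}$ establishes $\mP(\|D_{\diamond,\bullet}u_{n}(x,t)\|_{L^{2}(D\times[0,T])}>0)=1$, and the Bouleau--Hirsch criterion delivers absolute continuity of the law of $u_{n}(x,t)$ with respect to the Lebesgue measure on $\mR$. The principal obstacle is the deterministic lower estimate: while the matching upper bound (\ref{second estimate}) uses the embedding $L^{\frac{1}{H}}\hookrightarrow\ch$ to absorb the action of $K_{H}^{*}$, a lower bound requires ruling out destructive cancellations of the kernel $\partial_{r}K_{H}(r,s)$ against $G(x,\diamond,t-\bullet)$ for $s$ close to $t$ --- a phenomenon with no analogue in the white-noise treatments of \cite{WEB1,AFK1}, where $K_{H}^{*}$ collapses to the identity and one can read the bound directly off the heat kernel itself.
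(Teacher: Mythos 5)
Your overall architecture coincides with the paper's: Theorem 2.1.3 of \cite{N}, the splitting of (\ref{mal. der. of u_n}) into the deterministic kernel term $a$ and the remainder $b=R_{n}$, the inequality $(a+b)^{2}\geq \tfrac{1}{2}a^{2}-b^{2}$ on the window $[t-\epsilon,t]$, the upper bound $E\big(\int_{t-\epsilon}^{t}\int_{D}|R_{n}|^{2}\big)\leq c\,\epsilon^{2H+\frac14}\exp(\tfrac43\epsilon^{\frac34})$ obtained exactly as you describe from (\ref{R.R. 13})-type computations and Lemma \ref{important estimate}, Markov's inequality, and the passage $\epsilon\to0^{+}$. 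The correct observation that $\sigma\neq0$ is implicitly required is also to your credit.

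However, there is a genuine gap: the deterministic lower bound, which you yourself identify as ``the principal obstacle,'' is asserted rather than proved, and the rate you assert, $c_{1}\,\epsilon^{\frac{4H-1}{2}}=c_{1}\,\epsilon^{2H-\frac12}$, is almost certainly unattainable. The paper proves the lower bound (\ref{lower bound}) explicitly: it writes $\big[K^{*}_{H}(G(x,\diamond,t-\bullet)\textbf{1}_{[0,t]})\big](y,s)$ via (\ref{formula 1 for K_{H}^{*}}), expands the square, integrates out $y$ using the semigroup identity $\int_{D}G(x,z,t_{1})G(z,y,t_{2})\,dz=G(x,y,t_{1}+t_{2})$, inserts the explicit formula (\ref{par. der. of K_H}) for $\partial_{r}K_{H}$ (whose positivity is what rules out the ``destructive cancellation'' you worry about), and bounds $G(x,x,2t-r-r')\geq\tfrac1\pi$ by keeping only the zero mode. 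The resulting quantity $\Lambda(H,t,\epsilon)$ behaves like $c\,\epsilon^{2H}$ as $\epsilon\to0^{+}$ (this is visible from $\tfrac1\pi\int_{t-\epsilon}^{t}K_{H}(t,s)^{2}ds\sim c\,\epsilon^{2H}$), which is strictly smaller than your claimed $\epsilon^{2H-\frac12}$ for small $\epsilon$; a fixed-$x$ lower bound cannot be expected to match the $L^{\infty}$-in-$x$ upper bound (\ref{second estimate}). The argument still closes because the remainder is $O(\epsilon^{2H+\frac14})$ and $\epsilon^{2H+\frac14}/\epsilon^{2H}=\epsilon^{\frac14}\to0$ (the paper verifies $\epsilon^{2H+\frac14}/\Lambda(H,t,\epsilon)\to0$ by L'H\^{o}pital), so your scheme is salvageable with the weaker, correct rate --- but as written the proof is missing its central estimate, and the estimate it promises in its place is too strong.
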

\begin{proof}
In order to prove our claim, we invoke Theorem 2.1.3 of \cite{N} which requires us to show that
$$\Big{(} u_{n}(x,t) \in \mD^{1,1}_{\textup{\text{loc}}}\quad \&\quad \left \| D_{\diamond,\bullet}u_{n}(x,t) \right \|_{L^{2}(D\times[0,T])}>0\ \text{a.s.} \Big{)},\ \forall (x,t)\in D\times(0,T].$$

The first requirement is immediate since we have already shown in Lemma \ref{main lemma 2} that 
$$\{u_{n}(x,t)\}_{n\in\mN_{*}}\subset\mD^{1,2},\ \forall (x,t)\in D\times[0,T]$$ and we know from 1) of Remark \ref{important remark} that 
$\mD^{1,2}\subseteq\mD^{1,2}_{\textup{\text{loc}}}\subseteq\mD^{1,1}_{\textup{\text{loc}}}$.\\[0.1cm]
\indent We focus now on the second requirement. 
Starting again from equation (\ref{mal. der. of u_n}), i.e., 
$$D_{y,s}u_{n}(x,t) = \int_{s}^{t} \int_{D} G_{zz}(x,z,t-\tau) \cg_{n}(z,\tau) D_{y,s}u_{n}(z,\tau)\, dzd\tau\ + \sigma \big{[} K^{*}_{H} \big{(} G(x,\diamond,t-\bullet) \textup{\textbf{1}}_{[0,t]}(\bullet) \big{)} \big{]} (y,s)$$
which holds almost surely for any $(x,t)\in D\times[0,T]$ and any $(y,s)\in D\times[0,t]$, we take absolute value on both sides and square to get
$$\left | D_{y,s}u_n(x,t) \right |^{2} = \bigg{|} \int_{s}^{t} \int_{D} G_{zz}(x,z,t-\tau) \cg_{n}(z,\tau) D_{y,s}u_{n}(z,\tau)\, dzd\tau\ + \sigma \big{[} K^{*}_{H} \big{(} G(x,\diamond,t-\bullet) \textup{\textbf{1}}_{[0,t]}(\bullet) \big{)} \big{]} (y,s) \bigg{|}^{2}.$$
We make now use of the relation 
$$(a+b)^{2}\geq \dfrac{a^{2}}{2}-b^{2},\ \forall a,b\in\mR\quad \bigg{(}\shs{-0.15}\iff \Big{(}\dfrac{\sqrt{2}}{2}a+\sqrt{2}b\Big{)}^{2}\geq0,\ \forall a,b\in\mR  \bigg{)}$$ 
for
\begin{align*}
a= \sigma \big{[} K^{*}_{H} \big{(} G(x,\diamond,t-\bullet) \textup{\textbf{1}}_{[0,t]}(\bullet) \big{)} \big{]} (y,s)\quad \&\quad 
b= \int_{s}^{t} \int_{D} G_{zz}(x,z,t-\tau) \cg_{n}(z,\tau) D_{y,s}u_{n}(z,\tau)\, dzd\tau
\end{align*}
to obtain
\begin{align} \label{R.R.8}
\begin{split}
\left | D_{y,s}u_{n}(x,t) \right |^{2} &\geq \dfrac{\big{|} \sigma \big{[} K^{*}_{H} \big{(} G(x,\diamond,t-\bullet) \textup{\textbf{1}}_{[0,t]}(\bullet) \big{)} \big{]} (y,s) \big{|}^{2}}{2}\ - \\
&\shs{4}- \bigg{|} \int_{s}^{t} \int_{D} G_{zz}(x,z,t-\tau) \cg_{n}(z,\tau) D_{y,s}u_{n}(z,\tau)\, dzd\tau \bigg{|}^{2}.
\end{split}
\end{align}
Let now $(x,t)\in D\times(0,T]$ be arbitrary and consider any $\epsilon\in(0,t]$. We will now prove some important estimates that we are going to need later on. More specifically, we will show that
\begin{align}
E \bigg{(} \int_{t-\epsilon}^{t} \int_{D} \bigg{|} \int_{s}^{t} \int_{D} G_{zz}(x,z,t-\tau) \cg_{n}(z,\tau) D_{y,s}u_{n}(z,\tau)\, dzd\tau \bigg{|}^{2} dyds \bigg{)} &\leq \cc_{1}\, \epsilon^{2H+\frac{1}{4}}\, \exp (\cc_2\, \epsilon), \label{upper bound} \\[0.1cm]
\int_{t-\epsilon}^{t} \int_{D} \big{|} \sigma \big{[} K^{*}_{H} \big{(} G(x,\diamond,t-\bullet) \textup{\textbf{1}}_{[0,t]}(\bullet) \big{)} \big{]} (y,s) \big{|}^{2}\, dyds &\geq \cc_{3}\, \Lambda(H,t,\epsilon) \label{lower bound},
\end{align}
where $\cc_{1} = \cc_{1}(\sigma,H,n,T),\ \cc_{2} = \cc_{2}(n,T),\, \cc_3 = \cc_3(\sigma,H)$ are positive constants and
\svs{0.1}
$$\Lambda(H,t,\epsilon):=\dfrac{(t-\epsilon)^{2H-1}}{\epsilon^{3-2H}} \left [ \dfrac{t^{4-2H}-(t-\epsilon)^{4-2H}}{(4-2H)(3-2H)} - \dfrac{(t-\epsilon)^{2-2H} \big{[} t^{2}-(t-\epsilon)^{2}\big{]}}{2}+ \dfrac{(2-2H)(t-\epsilon)^{3-2H}\, \epsilon}{3-2H} \right ].$$
We note that $\Lambda(H,t,\epsilon) \geq 0$, with equality holding if and only if $\epsilon = t$, as will become clear from the derivation of (\ref{lower bound}). Following similar calculations as those done for (\ref{R.R. 13}), we obtain
\begingroup
\allowdisplaybreaks
\begin{align*}
E \bigg{(} \int_{t-\epsilon}^{t} \int_{D} \bigg{|} \int_{s}^{t} \int_{D} G_{zz}(x,&z,t-\tau) \cg_{n}(z,\tau) D_{y,s}u_{n}(z,\tau)\, dzd\tau \bigg{|}^{2} dyds \bigg{)} \\[0.1cm]
&\leq c(n,T) \int_{t-\epsilon}^{t} (t-\tau)^{-\frac{1}{4}} \sup_{z \in D} E \bigg{(} \int_{t-\epsilon}^{\tau} \int_{D} \left | D_{y,s}u_{n}(z,\tau) \right |^2 dyds \bigg{)} d\tau \\[0.1cm]
&\leq c(n,T) \int_{t-\epsilon}^{t} (t-\tau)^{-\frac{1}{4}} \sup_{\tau\in[t-\epsilon,t]} \sup_{z \in D} E \bigg{(} \int_{t-\epsilon}^{t} \int_{D} \left | D_{y,s}u_{n}(z,\tau) \right |^2 dyds \bigg{)} d\tau \\[0.1cm]
\overset{(\ref{R.R.6})}&{\leq} c(\sigma,H,n,T)\, \epsilon^{\frac{4H-1}{2}} \exp \big{(} c(n,T)\, \epsilon \big{)} \int_{t-\epsilon}^{t} (t-\tau)^{-\frac{1}{4}}\, d\tau \\[0.1cm]
&= c(\sigma,H,n,T)\, \epsilon^{\frac{4H-1}{2}} \exp \big{(} c(n,T)\, \epsilon \big{)}\, \frac{4}{3}\epsilon^{\frac{3}{4}} = c(\sigma,H,n,T)\, \epsilon^{2H+\frac{1}{4}} \exp \big{(} c(n,T)\, \epsilon \big{)},
\end{align*}
\endgroup
which establishes (\ref{upper bound}). Before proving (\ref{lower bound}), we mention one important property regarding the Green's function defined in (\ref{Green's function}). More specifically, for any $x,y \in D$ and any $t_{1},t_{2} > 0$, it holds
\begin{equation} \label{int. property of Green's function}
\int_{D} G(x,z,t_{1}) G(z,y,t_{2})\, dz = G(x,y,t_{1}+t_{2}).
\end{equation}
Having said that, we can now move on with the proof of (\ref{lower bound}):
\begingroup
\allowdisplaybreaks
\begin{align*}
\int_{t-\epsilon}^{t} \int_{D} \big{|} \sigma \big{[} K^{*}_{H} \big{(} &G(x,\diamond,t-\bullet) \textup{\textbf{1}}_{[0,t]}(\bullet) \big{)} \big{]} (y,s) \big{|}^{2}\, dyds = \sigma^{2} \int_{t-\epsilon}^{t} \int_{D} \bigg{(} \int_{s}^{t} G(x,y,t-r)\, \frac{\partial K_{H}(r,s)}{\partial r}\, dr \bigg{)}^{2} dyds 
\\[0.3cm]
&= \sigma^{2} \int_{t-\epsilon}^{t} \int_{s}^{t} \int_{s}^{t} \int_{D} \frac{\partial K_{H}(r,s)}{\partial r}\, \frac{\partial K_{H}(r',s)}{\partial r'}\, G(x,y,t-r)\, G(x,y,t-r')\,  dy dr dr' ds \\[0.3cm]
\overset{(\ref{int. property of Green's function})}&{=} \sigma^{2} \int_{t-\epsilon}^{t} \int_{s}^{t} \int_{s}^{t} \frac{\partial K_{H}(r,s)}{\partial r}\, \frac{\partial K_{H}(r',s)}{\partial r'}\, G(x,x,2t-r-r')\,  dr dr' ds
\\[0.3cm]
\overset{\text{Fubini}}&{=} \sigma^{2} \int_{t-\epsilon}^{t} \int_{t-\epsilon}^{r'} \int_{s}^{t} \frac{\partial K_{H}(r,s)}{\partial r}\, \frac{\partial K_{H}(r',s)}{\partial r'}\, G(x,x,2t-r-r')\,  dr ds dr' \\[0.3cm]
\overset{\text{Fubini}}&{=} \sigma^{2} \int_{t-\epsilon}^{t} \int_{t-\epsilon}^{t} \int_{t-\epsilon}^{\min\{r,r'\}} \frac{\partial K_{H}(r,s)}{\partial r}\, \frac{\partial K_{H}(r',s)}{\partial r'}\, G(x,x,2t-r-r')\,  ds dr dr' 
\\[0.3cm]
&= \sigma^{2} \int_{t-\epsilon}^{t} \int_{t-\epsilon}^{t} G(x,x,2t-r-r')\, \bigg{(} \int_{t-\epsilon}^{\min\{r,r'\}} \frac{\partial K_{H}(r,s)}{\partial r}\, \frac{\partial K_{H}(r',s)}{\partial r'}\,  ds \bigg{)} dr dr' \\[0.3cm]
\overset{(\ref{par. der. of K_H})}&{=} \sigma^{2} \int_{t-\epsilon}^{t} \int_{t-\epsilon}^{t} G(x,x,2t-r-r')\, \bigg{(} \int_{t-\epsilon}^{\min\{r,r'\}} \dfrac{s^{1-2H}\, (r-s)^{H-\frac{3}{2}}\, (r'-s)^{H-\frac{3}{2}}}{(rr')^{\frac{1}{2}-H}}\, ds \bigg{)} dr dr' 
\\[0.3cm]
&= \sigma^{2} \int_{t-\epsilon}^{t} \int_{t-\epsilon}^{t} G(x,x,2t-r-r')\, \bigg{(} \int_{t-\epsilon}^{\min\{r,r'\}} \dfrac{(rr')^{H-\frac{1}{2}}}{s^{2H-1}\, (r-s)^{\frac{3}{2}-H}\, (r'-s)^{\frac{3}{2}-H}}\, ds \bigg{)} dr dr' 
\\[0.3cm]
\overset{s\geq t-\epsilon}&{\geq} \sigma^{2} \int_{t-\epsilon}^{t} \int_{t-\epsilon}^{t} \dfrac{G(x,x,2t-r-r')\, (rr')^{H-\frac{1}{2}}}{(r-t+\epsilon)^{\frac{3}{2}-H}\, (r'-t+\epsilon)^{\frac{3}{2}-H}}\, \bigg{(} \int_{t-\epsilon}^{\min\{r,r'\}} \dfrac{1}{s^{2H-1}}\, ds \bigg{)} dr dr' \\[0.3cm]
&= c(\sigma,H) \int_{t-\epsilon}^{t} \int_{t-\epsilon}^{t} \dfrac{G(x,x,2t-r-r')\, (rr')^{H-\frac{1}{2}}}{(r-t+\epsilon)^{\frac{3}{2}-H}\, (r'-t+\epsilon)^{\frac{3}{2}-H}} \left [ \min\{r,r'\}^{2-2H}-(t-\epsilon)^{2-2H} \right ] dr dr'.
\end{align*}
We pause for a moment our calculations to give a lower bound for the term $G(x,x,2t-r-r')$. It holds
\begin{equation} \label{R.R.9}
G(x,x,2t-r-r') \overset{(\ref{Green's function})}{=} \sum\limits_{k=0}^{+\infty} a_{k}^{2}(x) e^{-k^{4}(2t-r-r')} = \frac{1}{\pi} + \frac{2}{\pi} \sum\limits_{k=1}^{+\infty} \cos^{2}(kx) e^{-k^{4}(2t-r-r')} \geq \frac{1}{\pi}
\end{equation}
(since $r\leq t$ \& $r'\leq t$, we have $2t-r-r'\geq0$, thus the last series is not only convergent but converges to a non-negative value). We can resume now with the calculations were we left
\begin{align*}
c(\sigma,H) \int_{t-\epsilon}^{t} &\int_{t-\epsilon}^{t} \dfrac{G(x,x,2t-r-r')\, (rr')^{H-\frac{1}{2}}}{(r-t+\epsilon)^{\frac{3}{2}-H}\, (r'-t+\epsilon)^{\frac{3}{2}-H}} \left [ \min\{r,r'\}^{2-2H}-(t-\epsilon)^{2-2H} \right ] dr dr' \\[0.3cm]
\overset{(\ref{R.R.9})}&{\geq} c(\sigma,H) \int_{t-\epsilon}^{t} \int_{t-\epsilon}^{t} \dfrac{(rr')^{H-\frac{1}{2}}}{(r-t+\epsilon)^{\frac{3}{2}-H}\, (r'-t+\epsilon)^{\frac{3}{2}-H}} \left [ \min\{r,r'\}^{2-2H}-(t-\epsilon)^{2-2H} \right ] dr dr' \\[0.3cm]
\overset{t-\epsilon\leq r,r'\leq t}&{\geq} 
c(\sigma,H)\, \dfrac{(t-\epsilon)^{2H-1}}{\epsilon^{3-2H}}\ \int_{t-\epsilon}^{t} \int_{t-\epsilon}^{t} \min\{r,r'\}^{2(1-H)}-(t-\epsilon)^{2(1-H)}\, dr dr' \\[0.3cm]
&\geq 
c(\sigma,H)\, \dfrac{(t-\epsilon)^{2H-1}}{\epsilon^{3-2H}}\ \int_{t-\epsilon}^{t} \int_{t-\epsilon}^{r'}  r^{2(1-H)}-(t-\epsilon)^{2(1-H)}\, dr dr' \\[0.3cm]
&= c(\sigma,H)\, \dfrac{(t-\epsilon)^{2H-1}}{\epsilon^{3-2H}} \int_{t-\epsilon}^{t} \dfrac{(r')^{3-2H}- (t-\epsilon)^{3-2H}}{3-2H} -(t-\epsilon)^{2(1-H)}(r'-t+\epsilon)\, dr' 
\\[0.3cm]
&= c(\sigma,H)\, \dfrac{(t-\epsilon)^{2H-1}}{\epsilon^{3-2H}}\, \left ( \int_{t-\epsilon}^{t} \dfrac{(r')^{3-2H}}{3-2H} -(t-\epsilon)^{2(1-H)}r'\, dr' -\dfrac{(t-\epsilon)^{3-2H}\ \epsilon}{3-2H}+(t-\epsilon)^{3-2H}\, \epsilon \right ) \\[0.3cm]
&= c(\sigma,H)\, \dfrac{(t-\epsilon)^{2H-1}}{\epsilon^{3-2H}}\, \left ( \int_{t-\epsilon}^{t} \dfrac{(r')^{3-2H}}{3-2H} -(t-\epsilon)^{2(1-H)}r'\, dr' + \dfrac{(2-2H)(t-\epsilon)^{3-2H}\, \epsilon}{3-2H} \right ) 
\\[0.3cm]
&\shs{-1.2}= c(\sigma,H)\, \dfrac{(t-\epsilon)^{2H-1}}{\epsilon^{3-2H}} \left [ \dfrac{t^{4-2H}-(t-\epsilon)^{4-2H}}{(4-2H)(3-2H)} - \dfrac{(t-\epsilon)^{2-2H} \left [ t^{2}-(t-\epsilon)^{2} \right ]}{2}+ \dfrac{(2-2H)(t-\epsilon)^{3-2H}\, \epsilon}{3-2H} \right ],
\end{align*}
\endgroup
which establishes (\ref{lower bound}). We are now in position to conclude the proof of the lemma. 

Let $(x,t) \in D\times(0,T]$ be arbitrary and consider any $\epsilon \in (0,t)$. Then, using again the notation
$$a= \sigma \big{[} K^{*}_{H} \big{(} G(x,\diamond,t-\bullet) \textup{\textbf{1}}_{[0,t]}(\bullet) \big{)} \big{]} (y,s)\quad \&\quad 
b= \int_{s}^{t} \int_{D} G_{zz}(x,z,t-\tau) \cg_{n}(z,\tau) D_{y,s}u_{n}(z,\tau)\, dzd\tau$$
we have
\begingroup
\allowdisplaybreaks
\begin{align} \label{R.R.10}
\begin{split}
1\geq \mP\bigg{(}\int_{0}^{T}\int_{D} |D_{y,s}u_{n}(x,t)|^{2}\, dyds>0\bigg{)} &\geq \mP\bigg{(}\int_{t-\epsilon}^{t}\int_{D} |D_{y,s}u_{n}(x,t)|^{2}\, dyds>0\bigg{)} 
\\[0.1cm]
\overset{(\ref{R.R.8})}&{\geq} \mP\bigg{(} \dfrac{1}{2} \int_{t-\epsilon}^{t} \int_{D} a^{2}\, dyds -\int_{t-\epsilon}^{t} \int_{D} b^{2}\, dyds > 0\bigg{)} \\[0.1cm]
\overset{(\ref{lower bound})}&{\geq} \mP\bigg{(}\int_{t-\epsilon}^{t} \int_{D} b^{2}\, dyds < \frac{\cc_{3}\, \Lambda(H,t,\epsilon)}{2} \bigg{)} \\[0.1cm]
&= 1- \mP\bigg{(}\int_{t-\epsilon}^{t} \int_{D} b^{2}\, dyds \geq \frac{\cc_{3}\, \Lambda(H,t,\epsilon)}{2} \bigg{)} \\[0.1cm]
\overset{\text{Markov}}&{\geq} 1-E\bigg{(} \int_{t-\epsilon}^{t} \int_{D} b^{2}\, dyds\bigg{)}\, \dfrac{2}{\cc_{3}\, \Lambda(H,t,\epsilon)} \\[0.1cm]
\overset{(\ref{upper bound})}&{\geq} 1- \frac{2\cc_{1}}{\cc_{3}}\,  \dfrac{\epsilon^{2H+\frac{1}{4}}}{\Lambda(H,t,\epsilon)}\, \exp (\cc_2\, \epsilon).
\end{split}
\end{align}
\endgroup
We want to pass now to the limit as $\epsilon\to0^{+}$.
We define for ease of use
$$c_{1}:=2H-1,\quad c_{2}:=2-2H,\quad c_{3}:=3-2H,\quad c_{4}:=4-2H$$
$\big{(}$we recall that we are working in the case $H \in \big{(} \frac{1}{2},1 \big{)}$ thus $c_{1},c_{2},c_{3},c_{4}>0 \big{)}$ and then calculate
\begingroup
\allowdisplaybreaks
\begin{align*}
\lim\limits_{\epsilon\to0^{+}} \bigg{(} \dfrac{\epsilon^{2H+\frac{1}{4}}}{\Lambda(H,t,\epsilon)} \bigg{)} &= \lim\limits_{\epsilon\to0^{+}} \left ( \frac{\epsilon^{2H + \frac{1}{4}}}{\dfrac{(t-\epsilon)^{c_{1}}}{\epsilon^{c_{3}}}\, \bigg{[} \dfrac{t^{c_{4}}-(t-\epsilon)^{c_{4}}}{c_{4}c_{3}} - \dfrac{(t-\epsilon)^{c_{2}} \big{[} t^{2}-(t-\epsilon)^{2}\big{]}}{2}+ \dfrac{c_{2}(t-\epsilon)^{c_{3}}\, \epsilon}{c_{3}} \bigg{]}} \right ) \\[0.3cm]
&= \lim\limits_{\epsilon\to0^{+}} \left ( \frac{2c_{3}c_{4}}{(t-\epsilon)^{c_{1}}}\cdot\frac{\epsilon^{\frac{13}{4}}}{2\big{[}t^{c_{4}}-(t-\epsilon)^{c_{4}}\big{]} - c_{3}c_{4}(t-\epsilon)^{c_{2}} \big{[} t^{2}-(t-\epsilon)^{2}\big{]} +2c_{2}c_{4} (t-\epsilon)^{c_{3}}\, \epsilon} \right )\\[0.3cm]
\end{align*}
\\[-1.2cm]
with
\begin{align*}
&\lim\limits_{\epsilon\to0^{+}} \Bigg{(} \frac{ \epsilon^{\frac{13}{4}}}{2\big{[}t^{c_{4}}-(t-\epsilon)^{c_{4}}\big{]}-c_{3}c_{4}(t-\epsilon)^{c_{2}} \big{[}t^{2}-(t-\epsilon)^{2}\big{]} + 2c_{2} c_{4} (t-\epsilon)^{c_{3}}\, \epsilon} \Bigg{)} =\\[0.1cm]
\overset{(\frac{0}{0})}{=} &\lim\limits_{\epsilon\to0^{+}} \bigg{(} \dfrac{\frac{13}{4}\epsilon^{\frac{9}{4}}}{2c_{4}(t-\epsilon)^{c_{4}-1} +c_{2}c_{3}c_{4}(t-\epsilon)^{c_{2}-1}\big{[}t^{2}-(t-\epsilon)^{2}\big{]}-2c_{3}c_{4}(t-\epsilon)^{c_{2}+1}}  \\[0.1cm]
&\shs{8}\phantom{\overset{(\frac{0}{0})}{=} \lim\limits_{\epsilon\to0^{+}} \bigg{(}}\frac{}{-2c_{2}c_{3}c_{4}(t-\epsilon)^{c_{3}-1}\, \epsilon+2c_{2}c_{4}(t-\epsilon)^{c_{3}}} \bigg{)},
\end{align*}
where we applied L'H\^{o}spital's rule. Notice that $c_{4}-1=c_{3}=c_{2}+1$ thus the first, third and last terms of the denominator cancel out giving us
\begin{align*}
\lim\limits_{\epsilon\to0^{+}} &\Bigg{(} \frac{ \epsilon^{\frac{13}{4}}}{2\big{[}t^{c_{4}}-(t-\epsilon)^{c_{4}}\big{]}-c_{3}c_{4}(t-\epsilon)^{c_{2}} \big{[}t^{2}-(t-\epsilon)^{2}\big{]}+2c_{2}c_{4}(t-\epsilon)^{c_{3}}\, \epsilon} \Bigg{)} =\\[0.1cm]
&= \lim\limits_{\epsilon\to0^{+}} \left ( \dfrac{\frac{13}{4}\epsilon^{\frac{9}{4}}}{c_{2}c_{3}c_{4}(t-\epsilon)^{c_{2}-1}\big{[}t^{2}-(t-\epsilon)^{2}\big{]}-2c_{2}c_{3}c_{4}(t-\epsilon)^{c_{3}-1}\, \epsilon} \right ) 
= \lim\limits_{\epsilon\to0^{+}} \left ( \dfrac{\frac{13}{4}\epsilon^{\frac{9}{4}}}{c_{2}c_{3}c_{4}(t-\epsilon)^{c_{2}-1}\, \epsilon^{2}} \right ) = 0.
\end{align*}
\endgroup
Gathering the above, we conclude that
$$\lim\limits_{\epsilon\to0^{+}} \left ( \dfrac{\epsilon^{2H+\frac{1}{4}}}{\Lambda(H,t,\epsilon)} \right ) = 0$$
and so, taking finally limits as $\epsilon\to0^{+}$ in relation (\ref{R.R.10}) yields
$$\mP\bigg{(}\int_{0}^{T}\int_{D} |D_{y,s}u_{n}(x,t)|^{2}\, dyds>0\bigg{)}=1 \iff \left \| D_{\diamond,\bullet}u_{n}(x,t) \right \|_{L^{2}(D\times[0,T])}>0\ \text{a.s.}$$
for any $(x,t)\in D\times(0,T]$ which is exactly the second requirement as mentioned in the beginning.
\end{proof}
\subsection{Absolute continuity of the law of $u(x,t)$ with respect to the Lebesgue measure on $\mR$} \label{Subsection 4.2}
$ $\newline\\[-0.3cm] \indent
Consider one last time the unique solution $u$ of (\ref{mild formulation}). Everything we have studied so far leads to the following theorem:
\begin{theorem} \label{main theorem 2}
For any $(x,t)\in D\times(0,T]$, the law of the random variable $u(x,t)$ is absolutely continuous with respect to the Lebesgue measure on $\mR$.
\end{theorem}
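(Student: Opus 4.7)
My plan is to transfer the absolute continuity established in Lemma \ref{abs. con. of law of u_n} from the localizing processes $u_n(x,t)$ to $u(x,t)$ via the localization $\{(\Omega_n, u_n(x,t))\}_{n \in \mathbb{N}_*}$ constructed in Section \ref{Section 3}. Fix $(x,t) \in D \times (0,T]$. To establish absolute continuity, I will take an arbitrary Borel set $A \subset \mathbb{R}$ with $m(A) = 0$ and show that $\mathbb{P}\big(u(x,t) \in A\big) = 0$.

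The first step is to decompose the event $\{u(x,t) \in A\}$ using the localizing sets. Since $\Omega_n \uparrow \Omega$ almost surely (as noted in Subsection \ref{Subsection 3.3}), one has $\mathbb{P}\big(\bigcup_{n \in \mathbb{N}_*} \Omega_n\big) = 1$, and hence
\[
\mathbb{P}\big(u(x,t) \in A\big) = \mathbb{P}\Big(\{u(x,t) \in A\} \cap \bigcup_{n \in \mathbb{N}_*} \Omega_n\Big) = \lim_{n \to +\infty} \mathbb{P}\big(\{u(x,t) \in A\} \cap \Omega_n\big),
\]
by monotone convergence applied to the increasing sequence of events $\{u(x,t) \in A\} \cap \Omega_n$.

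The second step uses the defining property of the localizing sequence, namely $u_n(x,t) = u(x,t)$ almost surely on $\Omega_n$ (established at the end of Subsection \ref{Subsection 3.3}). This identification yields
\[
\mathbb{P}\big(\{u(x,t) \in A\} \cap \Omega_n\big) = \mathbb{P}\big(\{u_n(x,t) \in A\} \cap \Omega_n\big) \leq \mathbb{P}\big(u_n(x,t) \in A\big)
\]
for every $n \in \mathbb{N}_*$. By Lemma \ref{abs. con. of law of u_n}, the law of $u_n(x,t)$ is absolutely continuous with respect to the Lebesgue measure on $\mathbb{R}$, so $\mathbb{P}\big(u_n(x,t) \in A\big) = 0$ since $m(A) = 0$. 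Passing to the limit in $n$ yields $\mathbb{P}\big(u(x,t) \in A\big) = 0$, which establishes the claim.

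I do not anticipate any real obstacle here: the entire preparatory work (the construction of $\Omega_n$, the existence of the approximating processes $u_n$, the indistinguishability on $\Omega_n$, and the absolute continuity of the law of each $u_n(x,t)$) has been carried out in Sections \ref{Section 3} and \ref{Section 4}. The present theorem is essentially the standard localization-to-global transfer for absolute continuity, and the only thing to be careful about is invoking the monotonicity $\Omega_n \uparrow \Omega$ correctly, together with the almost sure identification of $u$ and $u_n$ on $\Omega_n$.
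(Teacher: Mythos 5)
Your proof is correct, but it takes a genuinely different (and more elementary) route than the paper. The paper proves the theorem by invoking Theorem 2.1.3 of \cite{N} a second time, directly for $u(x,t)$: it checks that $u(x,t)\in\mD^{1,1}_{\textup{loc}}$ and then transfers the almost sure positivity of $\left\|D_{\diamond,\bullet}u_{n}(x,t)\right\|_{L^{2}(D\times[0,T])}$ (established inside the proof of Lemma \ref{abs. con. of law of u_n}) to $\left\|D_{\diamond,\bullet}u(x,t)\right\|_{L^{2}(D\times[0,T])}$, using the identification $D_{y,s}u=D_{y,s}u_{n}$ on $\Omega_{n}$ together with $\mP(\Omega_{n})\to1$. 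You instead bypass the Malliavin criterion entirely at this final stage and transfer absolute continuity directly at the level of laws: for a Borel null set $A$ you write $\mP(u(x,t)\in A)=\lim_{n}\mP(\{u(x,t)\in A\}\cap\Omega_{n})$, replace $u$ by $u_{n}$ on $\Omega_{n}$, and conclude from the conclusion (rather than the proof) of Lemma \ref{abs. con. of law of u_n}. Both arguments are valid; yours needs only the identification of the random variables $u=u_{n}$ on $\Omega_{n}$ and is the standard localization-to-global transfer for absolute continuity, while the paper's version additionally records the almost sure nondegeneracy of the Malliavin derivative of $u(x,t)$ itself, which is a slightly stronger by-product. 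The only point worth making explicit in your write-up is that the equality $\mP(\{u(x,t)\in A\}\cap\Omega_{n})=\mP(\{u_{n}(x,t)\in A\}\cap\Omega_{n})$ uses that the two events differ by a $\mP$-null set, which is exactly what the almost sure identification on $\Omega_{n}$ from Subsection \ref{Subsection 3.3} provides.
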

\begin{proof}
As we did for the analogous result in Lemma \ref{abs. con. of law of u_n}, we will invoke Theorem 2.1.3 of \cite{N} which requires us to show that
$$\Big{(} u(x,t) \in \mD^{1,1}_{\textup{\text{loc}}}\quad \&\quad \left \| D_{\diamond,\bullet}u(x,t) \right \|_{L^{2}(D\times[0,T])}>0\ \text{a.s.} \Big{)},\ \forall (x,t)\in D\times(0,T].$$

The first requirement is immediate since we have already explained at the end of Section \ref{Section 3} that 
$u(x,t)\in\mD^{1,2}_{\textup{\text{loc}}}$ for all $(x,t) \in D\times[0,T]$ and we know from 1) of Remark \ref{important remark} that 
$\mD^{1,2}_{\textup{\text{loc}}}\subseteq\mD^{1,1}_{\textup{\text{loc}}}$.\\[0.1cm]
\indent Regarding the second requirement, 
instead of following an arduous process like the one for deriving the analogous result for $u_{n}(x,t)$ in Lemma \ref{abs. con. of law of u_n}, we will use that result efficiently to obtain what we need. In detail, we fix an arbitrary $n \in \mN_{*}$ and define the following sets
\begin{alignat*}{2}
A_{1} &:= \Big{\{} \omega\in\Omega\, :\, \left \| D_{\diamond,\bullet}u(x,t;\omega) \right \|_{L^{2}(D\times[0,T])}>0 \Big{\}},\quad &&B_{1} := \Big{\{} \omega\in\Omega\, :\, \left \| D_{\diamond,\bullet}u_{n}(x,t;\omega) \right \|_{L^{2}(D\times[0,T])}>0 \Big{\}}, \\[0.1cm]
A_{2} &:= \Big{\{} \omega\in\Omega_{n}\, :\, \left \| D_{\diamond,\bullet}u(x,t;\omega) \right \|_{L^{2}(D\times[0,T])}>0 \Big{\}},\quad &&B_{2} := \Big{\{} \omega\in\Omega_{n}\, :\, \left \| D_{\diamond,\bullet}u_{n}(x,t;\omega) \right \|_{L^{2}(D\times[0,T])}>0 \Big{\}},
\end{alignat*}
where $\Omega_{n}$ is given by (\ref{def. of Omega_n}). We explained at the end of Subsection \ref{Subsection 3.3} that $D_{y,s}u(x,t) = D_{y,s}u_{n}(x,t)$ a.s. on $\Omega_{n}$ for all $(x,t),(y,s)\in D\times[0,T]$; therefore it holds that
$\mP(B_{2}) = \mP(A_{2})$.\\
Additionally, it was established in Lemma \ref{abs. con. of law of u_n} that $\mP(B_{1})=1$, thus $\mP(B_{1}^{c})=0$. Notice however that
$$\Omega_{n} - B_{2} = \left \{ \omega\in\Omega_{n}\, :\, \left \| D_{\diamond,\bullet}u_{n}(x,t;\omega) \right \|_{L^{2}(D\times[0,T])}=0 \right \} \subseteq \left \{ \omega\in\Omega\, :\, \left \| D_{\diamond,\bullet}u_{n}(x,t;\omega) \right \|_{L^{2}(D\times[0,T])}=0 \right \} = B_{1}^{c}$$
so we get $\mP(\Omega_{n} - B_{2}) \leq \mP(B_{1}^{c})$ which yields
$$\mP(\Omega_{n} - B_{2}) = 0 \iff \mP(\Omega_{n}) = \mP(B_{2}) \implies \mP(\Omega_{n}) = \mP(A_{2}) \overset{A_{2} \subseteq A_{1}}{\implies} \mP(\Omega_{n}) \leq \mP(A_{1}).$$
The last inequality holds for every $n \in \mN_{*}$ so, passing to the limit as $n \to +\infty$, we obtain $\mP(A_{1})=1$ which is exactly the second requirement.
\end{proof}
\section*{Acknowledgements}
The research work is implemented in the framework of H.F.R.I. call “Basic research Financing (Horizontal support of all Sciences)” under the National Recovery and Resilience Plan “Greece 2.0” funded by the European Union – NextGenerationEU. (H.F.R.I. Project Number: 14910).

\bibliographystyle{plain}

\end{document}